\numberwithin{equation}{section}
\theoremstyle{plain}
\newtheorem{theorem}{Theorem}[section]
\newtheorem{lemma}[theorem]{Lemma}
\newtheorem{corollary}[theorem]{Corollary}
\newtheorem{remark}[theorem]{Remark}
\DeclareMathOperator{\grad}{\nabla}
\DeclareMathOperator{\ddiv}{div}
\DeclareMathOperator{\sep}{:}  %
\newcommand*{\dd}{\ensuremath{\,\mathrm{d}}}
\newcommand{\discretized}[1]{\ensuremath{\boldsymbol{#1}}}
\newenvironment{keywords}{%
    \begingroup%
    \small%
    \textbf{Key words.}}{\endgroup}
\newenvironment{AMS}{%
    \begingroup%
    \small%
    \textbf{AMS subject classifications.}}{\endgroup}
\title{On the convergence of adaptive Galerkin FEM for parametric PDEs with lognormal coefficients}
\author{
    Martin Eigel\footnote{Weierstrass Institute for Applied Analysis and Stochastics, Mohrenstr. 39, 10117 Berlin, Germany} \ and
    Nando Hegemann\footnote{Physikalisch-Technische Bundesanstalt, Abbestr. 2--12, 10587 Berlin, Germany} $^{,}$\footnote{corresponding author: nando.hegemann@ptb.de}
}
\date{\today}
\begin{document}

\maketitle

\begin{abstract}
    Numerically solving high-dimensional random parametric PDEs poses a challenging computational problem.
    It is well-known that numerical methods can greatly benefit from adaptive refinement algorithms, in particular when functional approximations in polynomials are computed as in stochastic Galerkin finite element methods.
    This work investigates a residual based adaptive algorithm, akin to classical adaptive FEM, used to approximate the solution of the stationary diffusion equation with lognormal coefficients, i.e. with a non-affine parameter dependence of the data.
    It is known that the refinement procedure is reliable but the theoretical convergence of the scheme for this class of unbounded coefficients remains a challenging open question.
    This paper advances the theoretical state-of-the-art by providing a quasi-error reduction result for the adaptive solution of the lognormal stationary diffusion problem.
    The presented analysis generalizes previous results in that guaranteed convergence for uniformly bounded coefficients follows directly as a corollary.
    Moreover, it highlights the fundamental challenges with unbounded coefficients that cannot be overcome with common techniques.
    A computational benchmark example illustrates the main theoretical statement.
\end{abstract}

\begin{keywords}
    uncertainty quantification,
    adaptivity,
    convergence,
    parametric PDEs,
    residual error estimator,
    lognormal diffusion
\end{keywords}

\begin{AMS}
    65N12, %
    65N15, %
    65N50, %
    65Y20, %
    68Q25 %
\end{AMS}

\section{Introduction}%
\label{sec:intro}

In the natural sciences and engineering most modern simulation methods rely on \emph{partial differential equations} (PDEs).
In these applications, the simulation typically requires knowledge about many, only indirectly observable model data such as material properties or experimental inaccuracies.
Incorporating uncertainties or variations of the unknown parameters into the physical model easily leads to an extremely challenging discretization complexity, also known as the ``curse of dimensionality''.
To mitigate the numerical obstacles and obtain a better understanding of the problems underlying structure, model order reduction techniques such as RBM (reduced basis methods) and modern machine learning compression (neural networks and tensor networks) have been an essential area of research activity in the last decade.

One of the central notions in this field concerns constructing a solution iteratively, increasing the complexity locally (with respect to the different discretizations) only where it is necessary.
The main contribution of this work is to investigate and prove a guaranteed reduction of the quasi-error, consisting of error and error estimator, of such an adaptive algorithm, driven by a residual based error estimator, when applied to a certain class of elliptic parametric PDEs with unbounded\footnote{we use ``unbounded'' synonymously for ``not bounded uniformly''} coefficients.
This is analog to convergence results of classical adaptive finite element (FE) methods, which are foundational to this work.
As a model problem, we consider the parameter dependent stationary diffusion equation
\begin{equation}
    \begin{aligned}
      \label{eq:introduction:darcy}
      -\ddiv_x \bigl(a(x,y) \grad_x u(x,y)\bigr) &= f(x) \quad\mbox{in }D,\\
      u(x,y) &~= 0 \qquad\mbox{on }\partial D,
    \end{aligned}
\end{equation}
on a domain $D\subset\mathbb{R}^2$.
Here $y=(y_1,\dots,y_{\hat M})\subset\mathbb R^{\hat M}$ is a high or even infinite dimensional parameter vector determining the diffusion coefficient field $a$ and hence the solution $u$ of~\eqref{eq:introduction:darcy}.
In particular, we consider the coefficient to be the exponential of an affine random field $\gamma$ of Gaussian variables, i.e., $a(x,y) = \exp \gamma(x,y)$, which is generally referred to as a lognormal coefficient.
Several different adaptive schemes for both bounded (affine) and unbounded diffusion coefficients have been investigated in the literature~\cite{EigelGittelson2014asgfem,Bespalov2016,Bespalov2018,Crowder2019,Bespalov2021twolevel,Bachmayr2021adaptive,EigelMarschall2020lognormal,EigelFarchmin2022avmc}.
So far establishing convergence of the adaptive algorithms has only been accomplished for bounded affine diffusion coefficients~\cite{EigelGittelson2014convergence,BespalovPraetorius2019convergence,BespalovPraetorius2021optimality,EigelSprungk2022,bachmayr2024convergent}.

The main result of this work, Theorem~\ref{thm:convergence:convergence}, establishes that the proposed adaptive Algorithm~\ref{alg:estimator:adaptive}, which is based on the one described in~\cite{EigelFarchmin2022avmc}, reduces the quasi-error $\operatorname{err}_{\ell}^2 = \Vert \grad(u-u_\ell)\Vert^2 + \omega_\ell \eta_{\mathrm{det},\ell}^2 + \omega_\ell\tau \eta_{\mathrm{sto},\ell}^2$,\ $\omega_\ell,\tau>0$, in each iteration $\ell\in\mathbb{N}$, i.e.,
\begin{equation}%
  \label{eq:introduction:convergence}
  \begin{split}
    \operatorname{err}_{\ell+1}^2 \leq \delta_\ell \operatorname{err}_\ell^2,
    \qquad 0<\delta_\ell<1,
  \end{split}
\end{equation}
even for an unbounded lognormal coefficient $a$.
In our case the residual error estimator is composed of two contributions, namely $\eta_{\mathrm{det}}$ and $\eta_{\mathrm{sto}}$, and satisfies the reliability estimate $\Vert u-u_\ell\Vert^2 \lesssim \eta_\mathrm{det}(u_\ell)^2 + \eta_\mathrm{sto}(u_\ell)^2$ as in~\cite{EigelGittelson2014asgfem,EigelMarschall2020lognormal}.
We follow the proof in~\cite{EigelGittelson2014convergence}, which extends the basic strategies for deterministic finite element methods~\cite{Binev2004,Cascon2008,Doerfler1996,Morin2000,Stevenson2008} to the parametric setting.
Here, the major difference to the bounded affine case manifests in the adapted solution spaces necessary to guarantee well-posedness of~\eqref{eq:introduction:darcy}.
As a consequence, establishing basic properties such as Lipschitz continuity of $\eta_{\mathrm{det}}$ and $\eta_{\mathrm{sto}}$ or the additivity of $\eta_{\mathrm{sto}}$ with respect to the stochastic index set becomes non-trivial.

\begin{figure}[htpb]
  \centering
  \input{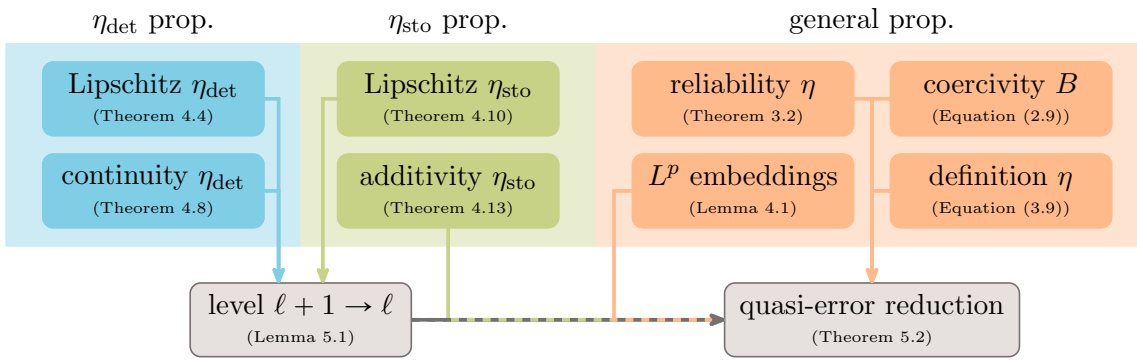}
  \caption{Schematic overview of the ingredients required for the quasi-error reduction proof and how they are employed.}%
  \label{fig:convergence:schematic}
\end{figure}

Figure~\ref{fig:convergence:schematic} visualizes schematically how the newly established properties are employed to prove that Algorithm~\ref{alg:estimator:adaptive} reduces the quasi-error~\eqref{eq:introduction:convergence} in each iteration.
We first utilize the Lipschitz continuity of $\eta_{\mathrm{det}}$ and $\eta_{\mathrm{sto}}$ as well as the continuity of $\eta_{\mathrm{det}}$ with respect to the index set (determining the active modes and hence the discrete parameter space) to prove Lemma~\ref{lem:convergence:main_lemma}, which yields an upper bound of the estimator contributions depending only on quantities of the previous iteration step.
Similarly, the embedding of weighted Gaussian $L^p$-spaces, the coercivity of the bilinear form of~\eqref{eq:introduction:darcy}in~\eqref{eq:setting:coercivity_B} and the reliability of the estimator lead to a bound of the error $\Vert u-u_{\ell+1}\Vert$ by quantities depending only on level $\ell$.
Refining either the spatial mesh or the stochastic discretization using a D\"orfler marking strategy allows us to derive a quasi-error reduction for the two different refinement scenarios.
Here, quasi additivity of $\eta_{\mathrm{sto}}$ in the index set is required to show the reduction in case $\eta_{\mathrm{sto}}$ is the dominating contribution.
Finally, the involved free constants have to be chosen appropriately to ensure~\eqref{eq:introduction:convergence} holds simultaneously for both spatial and stochastic refinement for some $0 < \delta_\ell < 1$.

We point out that while convergence with affine coefficients can be shown~\cite{EigelGittelson2014convergence,BespalovPraetorius2019convergence,Bachmayr2021adaptive}, the same strong statement cannot be achieved in our setting, despite the derived quasi-error reduction property in each refinement step.
This is an intrinsic consequence of the unboundedness of the diffusion coefficient.
In particular, the introduction of adapted function spaces to ensure well-posedness of~\eqref{eq:introduction:darcy} causes the degeneration of the Lipschitz constants of $\eta_{\mathrm{det}}$ and $\eta_{\mathrm{sto}}$ as the size of the stochastic index set increases.
Moreover, the energy norm induced by~\eqref{eq:introduction:darcy} and the canonical $L^2(\Gamma, \vartheta\rho; H_0^1(D))$ norm required to prove Lipschitz continuity are not equivalent~\cite{SchwabGittelson2011}, which allows $\delta_\ell \to 1$ as $\ell\to\infty$.
However, Theorem~\ref{thm:convergence:convergence} enables us to generalize the convergence proof of~\cite{EigelGittelson2014convergence} to arbitrary bounded coefficients and show convergence with arbitrarily high probability for a lognormal coefficient in practical applications.
The latter is confirmed by our numerical experiments.

The remainder of this work is structured as follows.
Section~\ref{sec:setting} introduces the model problem setting, its variational formulation as well as the spatial and stochastic discretization.
In Section~\ref{sec:estimator}, we define the residual based error estimator and introduce the algorithm that steers the adaptive refinement of the spatial mesh and the stochastic index set.
Thereafter, we show some basic properties of the error estimator contributions in Section~\ref{sec:properties}, which are essential to prove the main result in Section~\ref{sec:convergence}.
Finally, we show numerical evidence to confirm the theoretical results in Section~\ref{sec:experiments}.

\section{Parametric model problem}%
\label{sec:setting}

This section introduces the required notation for the parametric model problem~\eqref{eq:introduction:darcy}.
We give a short overview of the analytical setting and describe the results necessary to overcome most of the technical challenges caused by the unbounded lognormal coefficient.

\subsection{Stationary diffusion with lognormal coefficient}%
\label{sec:setting:darcy}

Let $D\subset \mathbb{R}^2$ be a polygonal bounded Lipschitz domain representing the spatial computational area.
Note that we restrict our setting to two physical dimensions only to avoid additional notational complexity, which would not add any relevant insights.
All results also hold true for $D\subset\mathbb{R}^d$ for $d=1,2,3$, see e.g.~\cite{EigelMarschall2020lognormal} for details.
For $\hat M\in\mathbb{N}\cup\{ \infty \}$ and almost all $y\in\mathbb{R}^{\hat M}$, let
\begin{equation}
  \label{eq:setting:gamma}
  \gamma(x,y) = \sum_{m=1}^{\hat M} \gamma_m(x) y_m
  \qquad\mbox{for any } x\in D.
\end{equation}
We define the diffusion coefficient $a$ in~\eqref{eq:introduction:darcy} as the exponential of $\gamma$ and note that the affine structure of $\gamma$ is a common representation in Uncertainty Quantification.
In a typical setting, where the randomness of the parametric problem is given through a random field with known covariance kernel, the Karhunen-Lo\`eve expansion~\cite{YingboHua1998} is a popular tool to decorrelate the random field into an affine sum similar to~\eqref{eq:setting:gamma}.
The unbounded diffusion coefficient is now given by
\begin{equation}
  \label{eq:setting:a}
  a(x,y) = \exp( \gamma(x,y)).
\end{equation}
Without loss of generality we restrict this work to a deterministic source term $f\in L^2(D)$ and homogeneous Dirichlet boundary conditions.

The unboundedness of coefficient $a$ yields an ill-posed problem and adapted function spaces have to be introduced, cf.~\cite{Bachmayr2017sparse,Hoang2014,Gittelson2010,Mugler2013,GalvisSarkis2009}.
In the following, we give a summary of the necessary concepts and refer to~\cite{EigelMarschall2020lognormal,SchwabGittelson2011} for a concise derivation of the analysis.
Let $\mathcal{X}:=H_0^1(D)$ be equipped with the norm $\Vert w\Vert_\mathcal{X}=\Vert \grad w\Vert_{L^2(D)}$, $\Vert w\Vert_D = \Vert w\Vert_{L^2(D)}$ and let $\mathcal{F} := \{ \mu\in\mathbb{N}_0^{\infty}\colon \vert \operatorname{supp}\mu\vert < \infty \}$ be the set of finitely supported multi-indices, where $\operatorname{supp}\mu$ denotes the set of all indices of $\mu$ different from zero.
The set of admissible parameters $y\in\mathbb{R}^{\hat M}$ is then given by
\begin{equation}
  \label{eq:setting:Gamma}
  \Gamma := \left\{ y\in\mathbb{R}^{\hat M}\colon \sum_{m=1}^{\hat M} \Vert \gamma_m\Vert_{L^{\infty}(D)} \vert y_m \vert < \infty \right\},
\end{equation}
which excludes only a set of measure zero from $\mathbb{R}^{\hat M}$~\cite{SchwabGittelson2011}.
For any $\rho\geq0$, we set $\sigma_m(\rho) = \exp(\rho\Vert \gamma_m \Vert_{L^\infty(D)}) \geq 1$ and define
\begin{equation}
  \label{eq:setting:zeta}
  \zeta_\rho(y) := \prod_{m=1}^{\hat M}\zeta_{\rho,m}(y_m)
  \quad\mbox{for}\quad
  \zeta_{\rho,m}(y_m) := \frac{1}{\sigma_m(\rho)} \exp\Bigl( \bigl(\frac{1}{2} -\frac{1}{2\sigma_m(\rho)^2}\bigr) y_m^2 \Bigr).
\end{equation}
Since $\zeta_{\rho,m}$ is the Radon-Nikodym derivative of $\mathcal{N}(0, \sigma_m(\rho)^2)$ with respect to the standard Gaussian measure, we can define the probability density function of $\mathcal{N}(0, \sigma_m(\rho)^2)$ via
\begin{equation}
  \label{eq:setting:pi}
  \pi_{\rho,m}(y_m) = \zeta_{\rho,m}(y_m) \frac{1}{\sqrt{2\pi}}\exp\left(-\frac{1}{2}y_m^2\right).
\end{equation}
Note that $\pi_{0,m}$ is the density of the standard Gaussian measure, since $\zeta_{0,m}\equiv 1$.
With this, we define the weighted product measure $\pi_{\rho}(y) := \prod_{m=1}^{\hat M} \pi_{\rho,m}(y_m)$.
Since we assume $y_m\sim\mathcal{N}(0,\sigma_m(\rho)^2)$ for some $\rho>0$, we refer to $a$ as a lognormal coefficient field.

Regarding the variational formulation, for any $\vartheta\in[0,1]$ and $\rho>0$, define the bilinear form for all $w,v\in L^2(\Gamma,\pi_{\vartheta\rho};\mathcal{X})$ by
\begin{equation}
  \label{eq:setting:B}
  B(w,v) := B_{\vartheta\rho}(w,v)
  := \int_{\Gamma} \int_{D} a(x,y)\grad w(x,y)\cdot\grad v(x,y) \,\mathrm{d}x \,\mathrm{d}\pi_{\vartheta\rho}(y)
\end{equation}
and denote by $\Vert w\Vert_{B} = B(w,w)^{1/2}$ the energy norm induced by $B$.
Additionally, let $\Vert w\Vert_{\pi_{\vartheta\rho},D}=\Vert w\Vert_{L^2(\Gamma,\pi_{\vartheta\rho};L^2(D))}$.
The variational form of~\eqref{eq:introduction:darcy} then reads: Find $u\in\mathcal{V}$ such that
\begin{equation}
  \label{eq:setting:varriational_form}
  B(u,v) = F(v)
  \quad\mbox{for all }v\in\mathcal{V},
\end{equation}%
where $F(v):=\int_{\Gamma}\int_D f(x)v(x,y)\,\mathrm{d}x\,\mathrm{d}\pi_{\vartheta\rho}(y)$ and
\begin{equation*}
  \mathcal{V} := \{ w\colon\Gamma\to\mathcal{X} \mbox{ measurable with }B_{\vartheta\rho}(w,w)<\infty \}.
\end{equation*}

\begin{remark}%
  With this problem adapted function space, it follows by~\cite[Proposition 2.43]{SchwabGittelson2011} that
  \begin{equation*}
    L^2(\Gamma,\pi_{\rho};\mathcal{X})
    \subset \mathcal{V}
    \subset L^2(\Gamma,\pi_{0};\mathcal{X})
  \end{equation*}
  are continuous embeddings for any $0<\vartheta<1$.
  Thus, the bilinear form $B_{\vartheta\rho}$ is $\mathcal{V}$--elliptic and bounded in the sense that
  \begin{align}
    \label{eq:setting:boundedness_B}
    \vert B_{\vartheta\rho}(w,v) \vert
    &\leq \hat{c}_{\vartheta\rho}\, \Vert \grad w\Vert_{\pi_{\rho},D}\, \Vert \grad v\Vert_{\pi_{\rho},D}
    &\mbox{for all }w,v\in L^2(\Gamma,\pi_{\rho};\mathcal{X}),\\
    \label{eq:setting:coercivity_B}
    B_{\vartheta\rho}(w,w) 
    &\geq \check{c}_{\vartheta\rho} \Vert \grad w\Vert_{\pi_{0},D}^2
    &\mbox{for all }w\in L^2(\Gamma,\pi_{0};\mathcal{X}),
  \end{align}
  for some $\hat{c}_{\vartheta\rho}, \check{c}_{\vartheta\rho} > 0$, which implies the well-posedness of~\eqref{eq:setting:varriational_form}.
\end{remark}

\subsection{Discretization}%
\label{sec:setting:discretization}

Since the spatial domain $D$ has a polygonal boundary, we assume a shape regular triangulation $\mathcal{T}$ that represents $D$ exactly.
We denote by $\mathcal{E}$ the set of edges of $\mathcal{T}$ and let $\partial T=\{ E\in\mathcal{E} \colon E\cap\bar{T}\neq\emptyset\}$ for any triangle $T\in\mathcal{T}$.
For any $E\in\mathcal{E}$ denote by $h_E=\vert E \vert$ the length of $E$ and let $h_T=\max_{E\in\partial T} h_E$ denote the diameter of any $T\in\mathcal{T}$.

For the spatial discretization, consider the conforming Lagrange finite element space of order $p$
\begin{equation}
  \label{eq:setting:X}
  \mathcal{X}_p(\mathcal{T})
  = P_p(\mathcal{T})\cap C(\bar{\mathcal{T}})
  =\operatorname{span}\{ \varphi_j \}_{j=1}^{J},
\end{equation}
where $P_p(\mathcal{T})$ denotes the space of element-wise polynomials of order $p$.
Here, $J\in\mathbb{N}$ denotes the dimension of the finite element space.
We define the normal jump of a function $\xi\in H^1(D;\mathbb{R}^2)$ over the edge $E=\bar{T}\cap \bar{T'}$ by $\llbracket \xi \rrbracket_E = (\xi|_{T} - \xi|_{T'})\cdot n_E$ for the edge normal vector $n_E=n_{T}=-n_{T'}$ of $E$.
Since the direction of the normal $n_E$ depends on the enumeration of the neighbouring triangles, we assume an arbitrary but fixed choice of the sign of $n_E$ for each $E\in\mathcal{E}$.
For any $w_N\in\mathcal{X}_p(\mathcal{T})$ and $\xi_N\in\grad\mathcal{X}_p(\mathcal{T}) = \{\grad w_N\colon w_N\in\mathcal{X}_p(\mathcal{T})\}$, recall the inverse estimates~\cite{Braess2007}
\begin{equation}
    \label{eq:setting:inverse_estimates}
    h_T^{1/2} \Vert \grad w_N\cdot n_T\Vert_{\partial T\cap D} \leq c_{\mathrm{inv}}\Vert \grad w_N \Vert_T
    \qquad\mbox{and}\qquad
    h_T\Vert \ddiv \xi_N\Vert_T \leq c_{\mathrm{inv}} \Vert \xi_N \Vert_T.
\end{equation}

For the discretization of the parameter space, by $\{P^m_k\}_{k=0}^\infty$ we denote a set of orthogonal and normalized polynomials in $L^2(\Gamma,\pi_{\vartheta\rho,m})$.
This defines a tensorized orthonormal product basis $\{ P_\mu \}_{\mu\in\mathcal{F}}$ of $L^2(\Gamma,\pi_{\vartheta\rho})$ by $P_\mu(y) := \prod_{m=1}^{\hat M} P_{\mu_m}^m(y_m) = \prod_{m\in\operatorname{supp}\mu} P_{\mu_m}^m(y_m)$.
Due to the weighted $L^2$ spaces of our functional setting, the basis $\{ P_\mu \}$ consists of scaled Hermite polynomials, see~\cite{SchwabGittelson2011,EigelFarchmin2022avmc}.
Note that the use of global polynomials is justified by the high (holomorphic) regularity of the solution of~\eqref{eq:introduction:darcy} with respect to the stochastic variables~\cite{Cohen2010,Cohen2011,Hoang2014}.
For any $\alpha,\beta,\mu\in\mathbb{N}_0^{\hat M}$ we define the triple product
\begin{equation}%
  \label{eq:setting:tau}
  \tau_{\alpha\beta\mu}:=\prod_{m=1}^{\hat M} \tau_{\alpha_m\beta_m\mu_m}^m
  \qquad\mbox{for}\qquad
  \tau_{\alpha_m\beta_m\mu_m}^m:=\int_\Gamma P_{\alpha_m}^{m}P_{\beta_m}^{m}P_{\mu_m}^{m}\dd\pi_{\vartheta\rho,m}(y)
\end{equation}
and note that $\tau_{ijk}^m=0$ if $i+j+k$ is odd or if $\max\{ i,j,k\} > (i+j+k)/2$.
For more analytical properties of the scaled Hermite polynomials we refer to~\cite{EigelMarschall2020lognormal,EigelFarchmin2022avmc}.

For any $j\in\mathbb{N}_0$ and $k\in\mathbb{N}$, let $[j\sep k]:=\{ j,\dots,k-1 \}$, where $[j\sep k] := \{ 0 \}$ if $j\geq k$ and $[k] := [0\sep k-1]$.
Define the full tensor index set $\Lambda_d$ for any $d\in\mathbb{N}^{\hat M}$ by
\begin{equation}
  \label{eq:setting:Lambda}
  \Lambda_d := [d_1] \times \dots [d_{\hat M}] \times [1] \times \dots \subset \mathcal{F}.
\end{equation}
Given two full tensor sets $\Lambda_d,\Lambda_{\hat d}$ with $d,\hat d\in\mathbb{N}^{\hat M}$, we define the sum of the two sets by $\Lambda_d+\Lambda_{\hat d} = \Lambda_{d+\hat d-1}$ and refer to $\partial \Lambda_d =\partial \Lambda_{d,\hat d} = \Lambda_{d+\hat d-1}\setminus\Lambda_d$ as the boundary of $\Lambda_d$ with respect to $\Lambda_{\hat d}$, where we drop the second subscript if the dependence on $\hat{d}$ is clear.
Note that we implicitly assume $d = (d_1,\dots,d_M, 1, \dots, 1)\in\mathbb{N}^{\hat M}$ for any $d\in\mathbb{N}^M$ with $M< \hat M$ to ensure compatibility of dimensions.

Given these sets, we define the fully discrete approximation space by
\begin{equation}
  \label{eq:setting:V_N}
  \mathcal{V}_N := \mathcal{V}_N(\Lambda_d; \mathcal{T}, p)
  := \Bigl\{ w_N(x,y) = \sum_{j=1}^{J}\sum_{\mu\in\Lambda_d} \discretized{w_{N}}[j,\mu] \varphi_j(x) P_\mu(y) \mbox{ with } \discretized{w_{N}}\in\mathbb{R}^{J\times d}\Bigr\}
  \subset \mathcal{V}.
\end{equation}
We refer to $\discretized{w_N}$ as the coefficient tensor of $w_N$ with respect to the bases $\{ \varphi_j \}$ and $\{ P_\mu \}$.
With this the Galerkin projection $u_N\in\mathcal{V}_N$ of the solution $u$ of~\eqref{eq:setting:varriational_form} is determined uniquely by
\begin{equation}
  \label{eq:setting:galerkin_system}
  B(u_N,v_N) = F(v_N)
\qquad\mbox{for all }v_N\in\mathcal{V}_N.
\end{equation}

\section{Error estimator and adaptive algorithm}%
\label{sec:estimator}

In this section a reliable residual based error estimator for the Galerkin solution $u_N\in \mathcal{V}_N$ is introduced.
Moreover, an algorithm for the adaptive refinement of the spatial and stochastic approximation spaces is described.
The estimator we define is based on the one presented in~\cite{EigelMarschall2020lognormal} and we refer to~\cite{EigelGittelson2014asgfem,EigelGittelson2014convergence,EigelMarschall2020lognormal} for details on the underlying analysis.

\subsection{A posteriori error estimator}%
\label{sec:estimator:estimator}

In the following, let $p\in\mathbb{N}$ be some fixed FE polynomial degree, $d\in\mathbb{N}^{M}$ and $\hat d\in\mathbb{N}^{\hat M}$ for $M \leq \hat M \in \mathbb{N}$.
Furthermore, assume that we have access to an approximation $a_N\in \mathcal{V}_N(\Lambda_{\hat d};\mathcal{T},p)$ of~\eqref{eq:setting:a} of the form
\begin{equation}
  \label{eq:estimator:a_N}
  a(x,y) 
  \approx a_N(x,y)
  = \sum_{j=1}^J\sum_{\hat\mu\in\Lambda_{\hat d}} \discretized{a_N}[j,\hat\mu]\varphi_j(x)P_{\hat\mu}(y)
\end{equation}
for any triangulation $\mathcal{T}$.
Then, for any $w\in \mathcal{V}$, consider the residual $\mathcal{R}(w)\in \mathcal{V}^* = L^2(\Gamma,\pi_{\vartheta\rho};\mathcal{X}^*)$ of~\eqref{eq:setting:varriational_form} for $a_N$ instead of $a$, given implicitly for all $v\in \mathcal{V}$ by
\begin{equation}
  \label{eq:estimator:residual}
  \langle \mathcal{R}(w), v \rangle_{V^*, V}
  = \int_{\Gamma} \int_{D} f(x)v(x,y) - a_N(x,y) \grad w(x,y)\cdot\grad v(x,y) \,\mathrm{d}x \,\mathrm{d}\pi_{\vartheta\rho}(y).
\end{equation}
Similarily, we consider the bilinear form~\eqref{eq:setting:B} with $a_N$ instead of $a$ in the following.

\begin{remark}%
  \label{rem:estimator:use_a_N}
  The residual associated to~\eqref{eq:setting:varriational_form} can be split into a term containing $\mathcal{R}(w)$ and a term describing the approximate error of the diffusion coefficient
  \begin{equation}
    \label{eq:estimator:appriximative_residual}
    \langle f + \ddiv(a\grad w), v\rangle_{\mathcal{V}^*, V}
    = \langle \mathcal{R}(w), v\rangle_{\mathcal{V}^*, V} + \int_{\Gamma}\int_{D} (a-a_N)\grad w\cdot\grad v \,\mathrm{d}x  \,\mathrm{d}\pi_{\vartheta\rho}(y),
  \end{equation}
  which implies that the task of finding $a_N$ to approximate~\eqref{eq:setting:a} can be considered separately.
  Since there exist several approaches to compute such an approximation~\cite{Espig2014, DolgovScheichl2019alsCross,EigelMarschall2020lognormal,EigelFarchmin2021expTT} and~\cite{EigelFarchmin2021expTT} even provides a computable a posteriori bound of the approximation error for any $a_N\in \mathcal{V}_N(\Lambda_{\hat d};\mathcal{T},p)$, we assume that the second term of~\eqref{eq:estimator:appriximative_residual} is negligible (with respect to the accurracy we would like to achieve for solution $u_N$) and restrict ourselves to~\eqref{eq:estimator:residual}.
  We also need to assume that~\eqref{eq:setting:boundedness_B}--~\eqref{eq:setting:coercivity_B} still hold when using $a_N$ instead of $a$, possibly with different constants $\hat c_{\vartheta\rho}$ and $\check c_{\vartheta\rho}$.
  Note that the solution error incurred by approximating $u$ by $u_N$ is bounded by the Strang lemma, see e.g.~\cite[Chap.~3-§1]{Braess2007}.
\end{remark}

For any $w_N\in \mathcal{V}_N$ we define $r(w_N) := a_N\grad w_N$ and note that by~\cite[Lemma 3.1]{EigelMarschall2020lognormal}
\begin{equation}
  \label{eq:estimator:r}
  r(w_N) = \sum_{\mu\in\Lambda_{d+\hat d-1}} r_\mu(w_N)P_\mu
  \quad\mbox{for}\quad
  r_\mu(w_N) = \sum_{j,k=1}^{J} \sum_{\alpha\in\Lambda_d}\sum_{\hat\alpha\in\Lambda_{\hat d}} \discretized{a_N}[k,\hat\alpha]\discretized{w_N}[j,\alpha]\tau_{\alpha\hat\alpha\mu}\varphi_k\grad\varphi_j
\end{equation}
has support on $\Lambda_{d+\hat d-1}$ due to the properties of the triple product $\tau_{\alpha\hat\alpha\mu}$.
With this we introduce the deterministic estimator contribution
\begin{equation}
  \label{eq:estimator:eta_det}
  \eta_\mathrm{det}(w_N,\mathcal{T},\Lambda_d)^2
  := \sum_{T\in\mathcal{T}} \Bigl(\eta_{\mathrm{det},T}(w_N,\Lambda_d)^2 + \eta_{\mathrm{det},\partial T}(w_N,\Lambda_d)^2\Bigr)
\end{equation}
for volume and jump contributions given by
\begin{align}
  \label{eq:estimator:eta_det:vol}
  \eta_{\mathrm{det},T}(w_N,\Lambda_d) 
  &:= h_T \Vert \sum_{\mu\in\Lambda_d}\bigl(f\delta_{\mu0} - \ddiv r_\mu(w_N) \bigr) P_\mu\,\zeta_{\vartheta\rho}\Vert_{\pi_0,T},\\
  \label{eq:estimator:eta_det:jump}
  \eta_{\mathrm{det},\partial T}(w_N,\Lambda_d) 
  &:= h_T^{1/2} \Vert \sum_{\mu\in\Lambda_d}\llbracket r_\mu(w_N) \rrbracket_{\partial T} P_\mu\,\zeta_{\vartheta\rho}\Vert_{\pi_0,\partial T},
\end{align}
where $\delta_{\mu\hat\mu}$ denotes the (multiindex) Kronecker delta.
In a similar fashion we define the stochastic estimator contribution by
\begin{equation}
  \label{eq:estimator:eta_sto}
  \eta_\mathrm{sto}(w_N,\Delta)
  := \Vert \sum_{\mu\in\Delta} r_\mu(w_N) P_\mu \zeta_{\vartheta\rho}\Vert_{\pi_0,D}
  \qquad\mbox{for any }\qquad\Delta\subseteq\partial\Lambda_d.
\end{equation}
Let the overall error estimator for any $w_N\in\mathcal{V}_N$ and (heuristically chosen\footnote{this is required since the scaling of the standard residual based error estimator cannot be computed a priori}) equilibration constant $c_{\mathrm{eq}} > 0$ be given by
\begin{equation}
  \label{eq:estimator:eta}
  \eta(w_N)^2
  = \eta_\mathrm{det}(w_N,\mathcal{T},\Lambda_d)^2 + c_\mathrm{eq}^2\eta_\mathrm{sto}(w_N,\partial\Lambda_d)^2.
\end{equation}
Then~\cite{EigelMarschall2020lognormal} yields the following reliability bound.

\begin{theorem}[reliability]%
  \label{thm:estimator:reliability}
  Let $u$ be the solution of~\eqref{eq:setting:varriational_form} and let $u_N\in \mathcal{V}_N$ be the Galerkin solution of~\eqref{eq:setting:galerkin_system}.
  There exists a constant $c_{\mathrm{rel}}>0$ depending only on the constant in~\eqref{eq:setting:coercivity_B} and the shape regularity of $\mathcal{T}$ such that
  \begin{equation*}
    \Vert u-u_N\Vert_B \leq c_{\mathrm{rel}}\,\eta(u_N).
  \end{equation*}
\end{theorem}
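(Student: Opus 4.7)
The plan is to follow the classical residual-based reliability argument of~\cite{EigelGittelson2014asgfem,EigelMarschall2020lognormal}, adapted to the weighted-Gaussian setting. Setting $e := u - u_N$, I would start from the identity $\Vert e\Vert_B^2 = B(e,e)$ and use (approximate) Galerkin orthogonality for the $a_N$-bilinear form (per Remark~\ref{rem:estimator:use_a_N}), so that
\[
\Vert e\Vert_B^2 = \langle \mathcal{R}(u_N),\, e - v_N\rangle_{\mathcal{V}^*,\mathcal{V}} \qquad\text{for any } v_N \in \mathcal{V}_N,
\]
reducing the reliability question to a clever selection of $v_N$.

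The structural observation driving that selection is that, by~\eqref{eq:estimator:r}, the flux $r(u_N) = a_N\grad u_N$ is supported on $\Lambda_{d+\hat d-1}$, and the orthonormality of $\{P_\mu\}$ in $L^2(\Gamma,\pi_{\vartheta\rho})$ ensures that only the modal components $e_\mu$ with $\mu\in\Lambda_{d+\hat d-1}$, together with the constant mode that sees the deterministic source $f$, contribute to $\langle \mathcal{R}(u_N), e\rangle$. I would split these contributing modes into those in $\Lambda_d$, for which a spatial quasi-interpolant is available in $\mathcal{V}_N$, and those in $\partial\Lambda_d$, where no such discrete approximation exists. Accordingly, $v_N := \sum_{\mu\in\Lambda_d} (I_h e_\mu)\,P_\mu$ with $I_h$ a Scott--Zhang/Clément-type projector onto $\mathcal{X}_p(\mathcal{T})$.

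For $\mu\in\Lambda_d$, element-wise integration by parts of the flux term produces the volume residual $f\delta_{\mu 0} - \ddiv r_\mu(u_N)$ and the jump residual $\llbracket r_\mu(u_N)\rrbracket_{\partial T}$. Pairing these against $e_\mu - I_h e_\mu$ and applying the standard quasi-interpolation bounds $\Vert w - I_h w\Vert_T \lesssim h_T \Vert\grad w\Vert_{\omega_T}$ and $\Vert w - I_h w\Vert_{\partial T} \lesssim h_T^{1/2} \Vert\grad w\Vert_{\omega_T}$, together with the measure swap $\dd\pi_{\vartheta\rho} = \zeta_{\vartheta\rho}\,\dd\pi_0$ and Cauchy--Schwarz over $\Gamma$, recovers exactly the contributions~\eqref{eq:estimator:eta_det:vol}--\eqref{eq:estimator:eta_det:jump}. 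For $\mu\in\partial\Lambda_d$, no interpolation is available and a direct Cauchy--Schwarz yields~\eqref{eq:estimator:eta_sto} multiplied by a surviving $\Vert\grad e\Vert_{\pi_0,D}$ factor. The coercivity~\eqref{eq:setting:coercivity_B}, $\Vert\grad e\Vert_{\pi_0,D} \leq \check c_{\vartheta\rho}^{-1/2}\Vert e\Vert_B$, is then used to absorb all such test-function factors on the left-hand side, yielding the claim with $c_\mathrm{rel}$ depending only on $\check c_{\vartheta\rho}$, the Clément constants, and the shape regularity of $\mathcal{T}$.

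The principal technical obstacle, and the reason the weight $\zeta_{\vartheta\rho}$ enters the estimator definitions explicitly, is the asymmetric bookkeeping of the measure: every Cauchy--Schwarz in $y$ must place the $\zeta_{\vartheta\rho}$ factor onto the residual side (producing the $\pi_0$-weighted norms with $\zeta_{\vartheta\rho}$ in~\eqref{eq:estimator:eta_det:vol}--\eqref{eq:estimator:eta_sto}), because the test-function factor involving $e$ can only be controlled in the plain $\pi_0$-norm through~\eqref{eq:setting:coercivity_B}, not in the natural $\pi_{\vartheta\rho}$-norm. Keeping this splitting consistent across the volume, jump, and stochastic contributions is the delicate part of the argument.
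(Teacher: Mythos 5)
Your proposal is correct and follows precisely the standard residual-based reliability argument (Galerkin orthogonality, Cl\'ement/Scott--Zhang quasi-interpolation of the $\Lambda_d$ modal components, direct Cauchy--Schwarz on $\partial\Lambda_d$, and coercivity~\eqref{eq:setting:coercivity_B} to absorb the $\pi_0$-norm of the test-function factor), which is the approach in~\cite{EigelMarschall2020lognormal} that the paper cites for Theorem~\ref{thm:estimator:reliability} rather than reproving it. You also correctly pinpoint the essential bookkeeping subtlety, namely the asymmetric Cauchy--Schwarz that places $\zeta_{\vartheta\rho}$ on the residual side so the test-function factor lands in the $\pi_0$-norm controllable by coercivity, which is exactly why the estimator contributions~\eqref{eq:estimator:eta_det:vol}--\eqref{eq:estimator:eta_sto} carry the explicit $\zeta_{\vartheta\rho}$ weight.
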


\begin{remark}%
  The estimator considered in~\cite{EigelMarschall2020lognormal} is not the same as~\eqref{eq:estimator:eta} for two reasons.
  First,~\eqref{eq:estimator:eta_det:jump} has a different ordering, which implies that~\eqref{eq:estimator:eta_det} is equivalent to the deterministic estimator contribution in~\cite{EigelMarschall2020lognormal} with a constant depending only on the shape regularity of $\mathcal{T}$.
  Second, we neglect error contributions introduced by the low-rank compression and the inexact iterative solver of the Galerkin solution $u_N$ of~\eqref{eq:setting:galerkin_system}.
  This is motivated by promising experimental results in recent works~\cite{EigelTrunschke2019vmc,EigelTrunschke2020,Trunschke2021}, where it is shown numerically that these errors can in principle be controlled, the inclusion of which would however complicate the analysis unnecessarily.
  Hence, we assume that the Galerkin solution $u_N$ is computable and focus on showing the reduction of the quasi-error through the adaptive algorithm in our setting.
\end{remark}

\subsection{Adaptive algorithm}%
\label{sec:estimator:algorithm}

Given a fixed FE polynomial degree $p\in\mathbb{N}_0$, an initial triangulation $\mathcal{T}$ and initial stochastic dimensions $d\in\mathbb{N}^M$, Algorithm~\ref{alg:estimator:adaptive} relies on a classical loop of \emph{Solve}--\emph{Estimate}--\emph{Mark}--\emph{Refine} to generate approximative solutions $u_\ell$ of~\eqref{eq:introduction:darcy}.
According to Remark~\ref{rem:estimator:use_a_N}, we assume $\hat d\in\mathbb{N}^{\hat M}$ sufficiently large such that the approximation error of the discretized diffusion coefficient~\eqref{eq:estimator:a_N} is negligible in each iteration of Algorithm~\ref{alg:estimator:adaptive}.

In each iteration $\ell$ of the loop, we compute the Galerkin solution $u_\ell$ of~\eqref{eq:setting:galerkin_system} (\emph{Solve}) and the estimator contributions~\eqref{eq:estimator:eta_det} and~\eqref{eq:estimator:eta_sto} (\emph{Estimate}).
During the \emph{Mark} step, we employ a conditional D\"orfler marking strategy based on the dominating estimator contribution to determine how to refine $\mathcal{T}_\ell$ and $\Lambda_{d_\ell}$.
If, on the one hand, $\eta_{\mathrm{det},\ell}(u_\ell,\mathcal{T}_\ell,\Lambda_{d_\ell})$ dominates we set the marked stochastic set $\Delta_{\ell}=\emptyset$ and employ a D\"orfler marking strategy to the mesh $\mathcal{T}_\ell$, where we use the D\"orfler threshold $\theta_{\mathrm{det}}$.
For the refinement of the spatial mesh $\mathcal{T}_\ell$, newest vertex bisection~\cite{Stevenson2008} is used on all marked elements $T\in\mathcal{M}_\ell$, which is denoted by ${\bf bisect}(\mathcal{T}_\ell,\mathcal{M}_\ell)$ in Algorithm~\ref{alg:estimator:adaptive}.
If, on the other hand, $c_{\mathrm{eq}}\,\eta_{\mathrm{sto},\ell}(u_\ell,\partial\Lambda_{d_\ell})$ dominates the error estimator, where $c_{\mathrm{eq}}$ is the equilibration constant in~\eqref{eq:estimator:eta}, we set the marked triangles to $\mathcal{M}_\ell=\emptyset$ and employ the D\"orfler marking with threshold $\theta_{\mathrm{sto}}$ to \{$\Delta_{\ell,m,q_m}\}_{m=1}^{\hat M}$.
The index sets $\Delta_{\ell,m,q_m}$ are given for each $m=1,\dots,\hat M$ by
\begin{equation}
  \label{eq:estimator:Delta_m}
  \Delta_{\ell,m,q_m} 
  = \bigotimes_{j=1}^{m-1}[d_j] \ \otimes\ [d_m:d_m+q_m] \ \otimes\ \bigotimes_{j=m+1}^{\hat M}[d_j]
  \qquad\mbox{for }q_m\in[1\sep \hat d_m-1].
\end{equation} 
Here the look ahead $q\in\mathbb{N}^{\hat M}$ is a free parameter which determines the depth of the considered univariate boundary contributions $\Delta_{\ell,m,q_m}$~\cite{EigelFarchmin2022avmc}.
Algorithm~\ref{alg:estimator:adaptive} iterates through the \emph{Solve}, \emph{Estimate}, \emph{Mark} and \emph{Refine} routines until the maximal number of loops is reached.
Note that specifying the maximum number of iterations a priori in Algorithm~\ref{alg:estimator:adaptive} is impractical for most applications and done here only for simplification.
More reasonable stopping criteria such as specifying a target threshold for the total error estimator or limiting the maximum of allowed degrees of freedom could be applied as well.
Different refinement strategies were explored in~\cite{BespalovPraetorius2019convergence}.

\medskip
\noindent
\begin{minipage}{\linewidth}
\begin{algorithm2e}[H]\label{alg:estimator:adaptive}
  \caption{Adaptive refinement scheme}
    \KwInput{%
        mesh $\mathcal{T}$;
        FE polynomial degree $p$;
        stochastic dimensions $d$;
        D\"orfler thresholds $0 < \theta_{\mathrm{det}}, \theta_{\mathrm{sto}} \leq 1$;
        maximum number of iterations $L$;
        look ahead $q\in\mathbb{N}^{\hat M}$
    }
    set $\mathcal{T}_1=\mathcal{T}$, $\Lambda_{d_1} = \Lambda_d$;\\
    \For{$\ell = 1, 2, \dots$}{%
      \Solve{%
        compute solution $u_\ell\in\mathcal{V}_N(\Lambda_{d_\ell}; \mathcal{T}_\ell, p)$ of~\eqref{eq:setting:galerkin_system};
      }

      \Estimate{%
        compute $\eta_{\mathrm{det},\ell}(u_\ell,\mathcal{T}_\ell,\Lambda_{d_\ell})$ as in~\eqref{eq:estimator:eta_det};\\
        compute $\eta_{\mathrm{sto},\ell}(u_\ell,\partial\Lambda_{d_\ell})$ as in~\eqref{eq:estimator:eta_sto};\\
      }

      {\bf if} $\ell\geq L$: {\bf break}\\

      \Mark{%
        \eIf{$\eta_{\mathrm{det},\ell}(u_\ell,\mathcal{T}_\ell,\Lambda_{d_\ell}) \geq c_{\mathrm{eq}}\,\eta_{\mathrm{sto},\ell}(u_\ell,\partial\Lambda_{d_\ell})$}{%
          set $\Delta_\ell=\emptyset$ and choose minimal set $\mathcal{M}_\ell\subseteq \mathcal{T}_\ell$ such that $\eta_{\mathrm{det},\ell}(u_\ell,\mathcal{M}_\ell,\Lambda_{d_\ell}) \geq \theta_{\mathrm{det}}\,\eta_{\mathrm{det},\ell}(u_\ell,\mathcal{T}_\ell,\Lambda_{d_\ell})$;
        }{%
          choose minimal set $\mathcal{M}\subseteq\{ 1,\dots,\hat M \}$ such that $\sum_{m\in \mathcal{M}}\eta_{\mathrm{sto},\ell}(u_\ell, \Delta_{\ell,m,q_m}) \geq \theta_{\mathrm{sto}} \, \eta_{\mathrm{sto},\ell}(u_\ell,\partial\Lambda_{d_\ell})$;\\
          set $\mathcal{M}_\ell=\emptyset$ and $\Delta_\ell = \bigcup_{m\in \mathcal{M}} \Delta_{\ell,m,q_m}$;
        }
      }

      \Refine{%
        $\mathcal{T}_{\ell+1} \gets \mathrm{\bf bisect}(\mathcal{T}_\ell, \mathcal{M}_\ell)$;\\
        $\Lambda_{d_{l+1}} \gets \Lambda_{d_\ell} \cup \Delta_\ell$;
      }
    }
\end{algorithm2e}
\end{minipage}
\medskip

\section{Estimator properties}%
\label{sec:properties}

In this section we establish some fundamental properties of the estimator contributions, which are required to prove the quasi-error reduction of Algorithm~\ref{alg:estimator:adaptive} for the unbounded lognormal diffusion coefficient~\eqref{eq:setting:a}.
The proof follows~\cite{EigelGittelson2014convergence} and is based on the continuity of the estimators.
As a preparation, we first establish how $\zeta_\rho^\alpha\pi_0$ behaves for different exponents $\alpha\geq 0$.
\begin{lemma}[embedding of weighted $L^p$-spaces]%
  \label{lem:properties:weighted_gaussian_measure}
  Let $\alpha \geq 0$ and 
  \begin{equation*}
    0 \leq \rho \leq \rho_\alpha =
    \begin{cases}
      1 & \mbox{if }\alpha \leq 1,\\
      \min\Bigl\{ 1, \frac{\ln(\alpha(\alpha-1)^{-1})}{2\hat\gamma} \Bigr\} & \mbox{if }\alpha>1,
    \end{cases}
  \end{equation*}
  where $\hat\gamma$ is an upper bound for~\eqref{eq:setting:gamma}, i.e., $\Vert \gamma_m \Vert_{L^\infty(D)}\leq\hat\gamma$ for all $m=0,\dots,\hat M$.
  Then there exists a constant $c_\alpha>0$ such that $\pi_{\rho,\alpha} = c_\alpha\zeta_{\rho}^\alpha\pi_0$ is the probability density function of a centered Gaussian distribution.
  Moreover, for any $0\leq\alpha\leq\beta$ and $\rho\leq\rho_\beta$, it holds
  \begin{equation*}
    L^p(\Gamma,\pi_{\rho,\beta}) \subset L^p(\Gamma,\pi_{\rho,\alpha})
    \qquad\mbox{for any }p\in[1,\infty).
  \end{equation*}
\end{lemma}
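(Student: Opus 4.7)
The plan is to compute $\zeta_\rho^\alpha \pi_0$ explicitly component by component, recognize the result as a Gaussian density after normalization, and then derive the embedding from a pointwise comparison of the densities.

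First I would substitute the definition of $\zeta_{\rho,m}$ together with $\pi_{0,m}(y_m) = (2\pi)^{-1/2}\exp(-y_m^2/2)$ to obtain, factorwise,
$$
\zeta_{\rho,m}(y_m)^\alpha\, \pi_{0,m}(y_m) = \frac{1}{\sigma_m(\rho)^\alpha \sqrt{2\pi}} \exp\Bigl(-\tfrac{1}{2} \nu_{m,\alpha}^2\, y_m^2\Bigr), \qquad \nu_{m,\alpha}^2 := 1 - \alpha\Bigl(1 - \frac{1}{\sigma_m(\rho)^2}\Bigr).
$$
For this to be proportional to a centered Gaussian density, $\nu_{m,\alpha}^2 > 0$ is needed for every $m$. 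If $\alpha \leq 1$ this is automatic. If $\alpha > 1$, the inequality rearranges to $\sigma_m(\rho)^2 < \alpha/(\alpha-1)$, and using $\Vert\gamma_m\Vert_{L^\infty(D)} \leq \hat\gamma$ together with the definition of $\sigma_m(\rho)$, this is exactly the content of $\rho \leq \ln(\alpha/(\alpha-1))/(2\hat\gamma)$, i.e., the second branch in the definition of $\rho_\alpha$. Choosing the normalization $c_\alpha := \prod_{m=1}^{\hat M} \sigma_m(\rho)^\alpha\, \nu_{m,\alpha}$ makes each factor integrate to one, so $\pi_{\rho,\alpha} := c_\alpha\zeta_\rho^\alpha\pi_0$ is a product of centered Gaussians with componentwise variances $\tilde\sigma_{m,\alpha}^2 = 1/\nu_{m,\alpha}^2$, as claimed. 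As a sanity check, $\alpha = 1$ gives $\nu_{m,1} = 1/\sigma_m(\rho)$, $c_1 = 1$, and recovers $\pi_{\rho,1} = \pi_\rho$.

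For the embedding, the key monotonicity is that $\alpha \mapsto \nu_{m,\alpha}^2$ is non-increasing, so $\tilde\sigma_{m,\alpha}^2 \leq \tilde\sigma_{m,\beta}^2$ whenever $\alpha \leq \beta$. Since $\rho_\alpha$ is non-increasing in $\alpha$, the assumption $\rho \leq \rho_\beta$ ensures that both $\pi_{\rho,\alpha}$ and $\pi_{\rho,\beta}$ are well-defined Gaussian product measures. A direct computation then gives
$$
\frac{\pi_{\rho,\alpha,m}(y_m)}{\pi_{\rho,\beta,m}(y_m)} = \frac{\nu_{m,\alpha}}{\nu_{m,\beta}}\, \exp\Bigl(-\tfrac{1}{2}(\beta-\alpha)\bigl(1 - 1/\sigma_m(\rho)^2\bigr)\, y_m^2\Bigr) \leq \frac{\nu_{m,\alpha}}{\nu_{m,\beta}},
$$
using $\beta \geq \alpha$ and $\sigma_m(\rho) \geq 1$. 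Multiplying over $m$ yields a uniform pointwise bound $\pi_{\rho,\alpha}(y) \leq K\, \pi_{\rho,\beta}(y)$ with $K := \prod_m \nu_{m,\alpha}/\nu_{m,\beta} < \infty$, whence
$$
\int_\Gamma |f|^p\,\dd\pi_{\rho,\alpha} \leq K \int_\Gamma |f|^p\,\dd\pi_{\rho,\beta}
$$
for every measurable $f$, delivering the continuous embedding $L^p(\Gamma,\pi_{\rho,\beta}) \subset L^p(\Gamma,\pi_{\rho,\alpha})$ for all $p \in [1,\infty)$.

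The main obstacle, in the regime $\hat M = \infty$, is guaranteeing finiteness of the infinite products defining $c_\alpha$ and $K$. Since $\nu_{m,\alpha}/\nu_{m,\beta} - 1 = O\bigl(1 - 1/\sigma_m(\rho)^2\bigr) = O\bigl(\Vert\gamma_m\Vert_{L^\infty(D)}\bigr)$ as $\Vert\gamma_m\Vert_{L^\infty(D)} \to 0$, convergence of both products reduces to the summability built into the definition of $\Gamma$ in~\eqref{eq:setting:Gamma} (and made precise in the standing assumptions that render $\pi_\rho$ a probability measure on $\Gamma$). The uniform strict positivity $\nu_{m,\beta}^2 \geq \nu^2_\ast > 0$ provided by $\rho \leq \rho_\beta$ then keeps the quotients in the infinite product bounded away from a degenerate value, closing the argument.
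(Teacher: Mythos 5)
Your proof is correct and follows essentially the same route as the paper: compute $\zeta_{\rho,m}^\alpha\pi_{0,m}$ factorwise, recognize a Gaussian, derive the positivity constraint that yields $\rho_\alpha$, normalize, and compare the two measures by a uniform bound on the density ratio. The only cosmetic difference is that you phrase the embedding as a pointwise domination $\pi_{\rho,\alpha}\leq K\,\pi_{\rho,\beta}$, whereas the paper factors the comparison through $\Vert\zeta_\rho^{\alpha-\beta}\Vert_{L^\infty(\Gamma)}$; one can check your $K=\prod_m\nu_{m,\alpha}/\nu_{m,\beta}$ equals the paper's $c\,\Vert\zeta_\rho^{\alpha-\beta}\Vert_{L^\infty(\Gamma)}$, so the constants agree.
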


\begin{proof}%
  Note that for any $\alpha>0$ and $\sigma_{\alpha,m}(\rho) = \sigma_m(\rho)\bigl( \alpha+(1-\alpha)\sigma_m(\rho)^2 \bigr)^{-1/2}$ it holds
  \begin{equation*}
    \begin{split}
      \zeta_{\rho,m}(y_m)^\alpha \pi_{0,m}(y_m)
      &= \frac{1}{\sqrt{2\pi}\sigma_m(\rho)^\alpha}\exp\Bigl( -\frac{y_m^2}{2\sigma_{\alpha,m}(\rho)^2} \Bigr).
    \end{split}
  \end{equation*}
  Defining the normalization constant
  \begin{equation*}
    c_{\alpha,m}(\rho)
    := \sigma_m(\rho)^{\alpha-1}\sqrt{\alpha+(1-\alpha)\sigma_m(\rho)^2}
    = \sigma_{m}(\rho)^\alpha\sigma_{\alpha,m}(\rho)^{-1}
  \end{equation*}
  then yields that $\pi_{\rho,\alpha,m} = c_{\alpha,m}(\rho) \zeta_{\rho,m}^\alpha \pi_{0,m}$ is the probability density function of a univariate centered Gaussian distribution with variance $\sigma_{\alpha,m}(\rho)^2$.
  Since $\sigma_m(\rho)=\exp(\rho\Vert\gamma_m\Vert_{L^\infty(D)})$, we need
  \begin{equation*}
    0 < \rho <
    \begin{cases}
      1 & \mbox{if }\alpha \leq 1,\\
      \min\Bigl\{ 1, \frac{\ln(\alpha(\alpha-1)^{-1})}{2\Vert \gamma_m \Vert_{L^\infty(D)}} \Bigr\} & \mbox{if }\alpha>1,
    \end{cases}
  \end{equation*}
  to ensure that $\pi_{\rho,\alpha,m}$ is integrable with respect to the standard Lebesgue measure.
  As a consequence and by the definition of $\hat\gamma$, $\rho\leq\rho_{\alpha}$ ensures integrability of $\pi_{\rho,\alpha} = \prod_{m=1}^{\hat M} \pi_{\rho,\alpha,m}$.
  It is easy to see that for any $0<\alpha<\beta\in\mathbb{R}$ and $w\in L^p(\Gamma,\pi_{\rho,\beta})$,
  \begin{equation*}
    \Vert w \Vert_{L^p(\Gamma,\pi_{\rho,\alpha})}
    \leq c\,\Vert \zeta_{\rho}^{\alpha-\beta} \Vert_{L^\infty(\Gamma)} \Vert w \Vert_{L^p(\Gamma,\pi_{\rho,\beta})}
    \quad\text{with}\quad c = \prod_{m=1}^{\hat M} \frac{c_{\alpha,m}(\rho)}{c_{\beta,m}(\rho)}.
  \end{equation*}
  Moreover, $\vert\zeta_{\rho,m}^{\alpha-\beta}\vert\leq\sigma_m(\rho)^{\beta-\alpha}<\infty$, which implies
  \begin{equation*}
    \Vert \zeta_{\rho}^{\alpha-\beta} \Vert_{L^\infty(\Gamma)}
    = \prod_{m=1}^{\hat M}\sigma_m(\rho)^{\beta-\alpha}
    <\infty.
  \end{equation*}
  Consequently, $L^p(\Gamma,\pi_{\rho,\beta}) \subset L^p(\Gamma,\pi_{\rho,\alpha})$ for all $1\leq p\in\mathbb{R}$.
\end{proof}

To prove the Lipschitz continuity of $\eta_{\mathrm{det}}$ and $\eta_{\mathrm{sto}}$ later in this section, we also require the following observations.

\begin{corollary}%
  \label{cor:properties:polynom_integration}
  Let $w$ be a polynomial, $\alpha\geq0$ and $\rho\leq\rho_{\alpha}$ as in Lemma~\ref{lem:properties:weighted_gaussian_measure}. Then it holds
  \begin{equation*}
    \Vert w \zeta_{\rho}^\alpha\Vert_{L^p(\Gamma, \pi_{0})} < \infty.
  \end{equation*}
\end{corollary}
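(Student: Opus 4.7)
The plan is to reduce the claim to the elementary fact that polynomials have finite moments of every order against a Gaussian product measure, using Lemma~\ref{lem:properties:weighted_gaussian_measure} to recognize a suitable power of $\zeta_\rho$ times $\pi_0$ as (proportional to) such a density.

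First I would expand the norm directly as
\begin{equation*}
    \Vert w\zeta_\rho^\alpha\Vert_{L^p(\Gamma,\pi_0)}^p
    = \int_\Gamma \vert w(y)\vert^p \zeta_\rho(y)^{p\alpha}\,\mathrm{d}\pi_0(y),
\end{equation*}
so that the weight has been merged into the measure and the polynomial $w$ has been isolated. Next I would apply Lemma~\ref{lem:properties:weighted_gaussian_measure} with the exponent $p\alpha$ in place of $\alpha$: provided $\rho\leq \rho_{p\alpha}$, there is a positive normalization constant $c_{p\alpha}$ such that $c_{p\alpha}\zeta_\rho^{p\alpha}\pi_0 = \pi_{\rho,p\alpha}$ is the product density of centered Gaussians with variances $\sigma_{p\alpha,m}(\rho)^2$. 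Substituting gives
\begin{equation*}
    \Vert w\zeta_\rho^\alpha\Vert_{L^p(\Gamma,\pi_0)}^p
    = c_{p\alpha}^{-1}\int_\Gamma \vert w(y)\vert^p\,\mathrm{d}\pi_{\rho,p\alpha}(y),
\end{equation*}
and since $w$ is a polynomial, $\vert w\vert^p$ is dominated by a polynomial in $\Vert y\Vert$, which is integrable against any centered Gaussian product measure. The integral is therefore finite, giving the claim.

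The only delicate point is bookkeeping on the admissible range of $\rho$: the stated hypothesis is $\rho\leq\rho_\alpha$, whereas the argument actually needs $\rho\leq\rho_{p\alpha}$, and since $\rho_\cdot$ is decreasing on $(1,\infty)$ these only coincide when $p\alpha\leq 1$. In the intended application $p$ is fixed (typically $p=2$) and this tightening is implicit—in a fully explicit statement one would replace $\rho_\alpha$ by $\rho_{p\alpha}$, or equivalently absorb the factor $p$ into the effective upper bound $\hat\gamma$ appearing in Lemma~\ref{lem:properties:weighted_gaussian_measure}. Once this alignment is made, the proof is a direct consequence of Lemma~\ref{lem:properties:weighted_gaussian_measure} together with the trivial finiteness of Gaussian polynomial moments.
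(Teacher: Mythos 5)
Your proof is correct and follows exactly the paper's own argument: expand the $L^p$ norm, absorb $\zeta_\rho^{p\alpha}$ into the measure via Lemma~\ref{lem:properties:weighted_gaussian_measure}, and conclude from the finiteness of polynomial moments against a centered Gaussian product measure. Your remark on the admissible range of $\rho$ is also accurate and worth flagging: the hypothesis $\rho\leq\rho_\alpha$ is strictly weaker than what the one-line argument actually invokes, namely $\rho\leq\rho_{p\alpha}$, so the corollary as stated is slightly imprecise (the paper's own proof makes the same silent substitution), and replacing $\rho_\alpha$ by $\rho_{p\alpha}$ in the hypothesis is the correct repair.
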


\begin{proof}%
  By Lemma~\ref{lem:properties:weighted_gaussian_measure} there exists a constant $c>0$ such that $\Vert w \zeta_{\rho}^\alpha\Vert_{L^p(\Gamma, \pi_{0})}^p = c \Vert w \Vert_{L^p(\Gamma, \pi_{\rho,p\alpha})}^p$.
  Since $w$ is a polynomial and $\pi_{\rho,p\alpha}$ decays exponentially, $\Vert w \Vert_{L^p(\Gamma, \pi_{\rho,p\alpha})}<\infty$.
\end{proof}

\begin{remark}[doubly orthogonal polynomials]%
  \label{rem:properties:doubly_orthogonal_polynomials}
  For any index set $\Lambda\subset\mathbb{N}_0^M$ with basis $\{ P_\mu \}_{\mu\in\Lambda}$, there exists a linear transformation $Z\colon\operatorname{span}\{ P_\mu \ \vert\ \mu\in\Lambda \} \to \operatorname{span}\{ P_\mu \ \vert\ \mu\in\Lambda \}$ such that the inverse transformation $Z^{-1}$ maps $\{ P_\mu \}$ onto a doubly orthogonal polynomial basis $\{ \hat{P}_\mu \}$, i.e.\ such that it holds
  \begin{equation*}
    \int_\Gamma \hat{P}_\mu\hat{P}_\nu\zeta_{\vartheta\rho}\dd\pi_0 = \delta_{\mu\nu}
    \qquad\mbox{and}\qquad
    \int_\Gamma \hat{P}_\mu\hat{P}_\nu\zeta_{\vartheta\rho}^2\dd\pi_0 = c_\mu^2\delta_{\mu\nu},
  \end{equation*}
  for all $\mu,\nu\in\Lambda_d$, where $0 < c_\mu \to \infty$ as $\vert\mu\vert\to\infty$, see~\cite{Shapiro1979} .
  If $\vert\Lambda\vert < \infty$ the transformation can be written as $Z\in\mathbb{R}^{\vert\Lambda\vert\times\vert\Lambda\vert}$ such that $P_\mu = \sum_{\nu\in\Lambda} z_{\mu\nu} \hat{P}_\nu$.
\end{remark}

\subsection{Deterministic estimator contribution}%
\label{sec:properties:det}

In the following, we show that the deterministic estimator contribution~\eqref{eq:estimator:eta_det} satisfies a continuity conditions.

\begin{theorem}[Lipschitz continuity of $\eta_{\mathrm{det}}$ in the first component on each $T$]%
  \label{thm:properties:lipschitz_det}
  For any $v_N,w_N\in\mathcal{V}_N=\mathcal{V}_N(\Lambda_d;\mathcal{T},p)$ and $T\in\mathcal{T}$ there exists a constant $c_{\mathrm{det}}>0$ depending only on the active set $\Lambda_d$, such that
  \begin{equation*}
    \bigl\vert \left.\eta_{\mathrm{det}}(v_N,\Lambda_d)\right|_T - \left.\eta_{\mathrm{det}}(w_N,\Lambda_d)\right|_T\bigr\vert
    \leq c_{\mathrm{det}} \Vert \grad(v_N-w_N)\Vert_{\pi_{\vartheta\rho},T}.
  \end{equation*}
\end{theorem}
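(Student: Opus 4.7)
Set $e := v_N - w_N \in \mathcal{V}_N$ and observe that, since the source $f\delta_{\mu0}$ in~\eqref{eq:estimator:eta_det:vol} does not depend on the argument, it drops out of the difference $\eta_{\mathrm{det},T}(v_N)-\eta_{\mathrm{det},T}(w_N)$. The plan is to reduce the claim in four steps to a multiplier estimate for $a_N\grad e$ on $T$. First, I would apply the reverse triangle inequality in $L^2(\Gamma,\pi_0;L^2(T))$, respectively $L^2(\Gamma,\pi_0;L^2(\partial T))$, to obtain
\[
  \bigl\vert \eta_{\mathrm{det},T}(v_N)-\eta_{\mathrm{det},T}(w_N)\bigr\vert
  \leq h_T\Bigl\Vert\sum_{\mu\in\Lambda_d}\ddiv r_\mu(e)\,P_\mu\,\zeta_{\vartheta\rho}\Bigr\Vert_{\pi_0,T},
\]
and the analogous estimate for $\eta_{\mathrm{det},\partial T}$ with $\llbracket r_\mu(e)\rrbracket_{\partial T}$ replacing $\ddiv r_\mu(e)$. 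The pointwise inequality $|\sqrt{A^2+B^2}-\sqrt{C^2+D^2}|\leq\sqrt{(A-C)^2+(B-D)^2}$ then combines both into a control of $\bigl\vert \eta_{\mathrm{det}}|_T(v_N)-\eta_{\mathrm{det}}|_T(w_N)\bigr\vert$.

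Second, I would exchange the weight $\zeta_{\vartheta\rho}\,d\pi_0$ for $d\pi_{\vartheta\rho}$ using the doubly orthogonal basis $\{\hat P_\nu\}_{\nu\in\Lambda_d}$ from Remark~\ref{rem:properties:doubly_orthogonal_polynomials}. Writing $\sum_{\mu\in\Lambda_d} s_\mu P_\mu = \sum_{\nu\in\Lambda_d}\tilde s_\nu \hat P_\nu$ and using both orthogonality relations yields
\[
  \bigl\Vert\textstyle\sum_{\mu\in\Lambda_d} s_\mu P_\mu\,\zeta_{\vartheta\rho}\bigr\Vert_{\pi_0,K}^{2}
  = \sum_{\nu\in\Lambda_d} c_\nu^{2}\Vert\tilde s_\nu\Vert_{L^2(K)}^{2}
  \leq c_{\max}(\Lambda_d)^{2}\bigl\Vert\textstyle\sum_{\mu\in\Lambda_d} s_\mu P_\mu\bigr\Vert_{\pi_{\vartheta\rho},K}^{2},
\]
with $c_{\max}(\Lambda_d):=\max_{\nu\in\Lambda_d} c_\nu<\infty$ depending only on $\Lambda_d$ and the fixed $\vartheta\rho$. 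Applied to $K=T$ and $K=\partial T$, this replaces both weighted norms by plain $\pi_{\vartheta\rho}$ norms. Third, I would notice that $\sum_{\mu\in\Lambda_d} r_\mu(e)\,P_\mu$ is exactly the $L^2(\pi_{\vartheta\rho})$-orthogonal projection $\Pi_{\Lambda_d}(a_N\grad e)$ onto $\operatorname{span}\{P_\mu:\mu\in\Lambda_d\}$; since $\Pi_{\Lambda_d}$ commutes with every $x$-derivative and is non-expansive, the two remaining quantities are bounded by $h_T\Vert\ddiv(a_N\grad e)\Vert_{\pi_{\vartheta\rho},T}$ and $h_T^{1/2}\Vert\llbracket a_N\grad e\rrbracket_{\partial T}\Vert_{\pi_{\vartheta\rho},\partial T}$, to which the polynomial inverse estimates~\eqref{eq:setting:inverse_estimates} apply pointwise in $y$ (the jump requires the one-sided trace estimate on each side of every $E\subset\partial T$, with the neighbour contribution absorbed by shape regularity); integrating against $\pi_{\vartheta\rho}$ removes the $h_T$ and $h_T^{1/2}$ prefactors.

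The final and hardest step is a multiplier estimate $\Vert a_N\grad e\Vert_{\pi_{\vartheta\rho},T}\leq C(a_N,\Lambda_d)\,\Vert\grad e\Vert_{\pi_{\vartheta\rho},T}$. I would prove this by expanding $a_N\grad e$ with the triple-product formula~\eqref{eq:estimator:r}, which by the $\pi_{\vartheta\rho}$-orthonormality of $\{P_\mu\}$ gives $\Vert a_N\grad e\Vert_{\pi_{\vartheta\rho},T}^{2}=\sum_{\mu\in\Lambda_{d+\hat d-1}}\Vert r_\mu(e)\Vert_{L^2(T)}^{2}$, and then applying Cauchy--Schwarz over the finite sums indexed by $\Lambda_d$ and $\Lambda_{\hat d}$; the resulting constant absorbs the $L^\infty$ bounds of the coefficients $\discretized{a_N}$ on $T$ and the combinatorial weights $\tau_{\alpha\hat\alpha\mu}$. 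Concatenating the four steps delivers the claimed Lipschitz bound with $c_{\mathrm{det}}$ collecting the constants from each step. The main obstacle is that both $c_{\max}(\Lambda_d)$ and the multiplier constant depend on $\Lambda_d$ in a potentially degenerating way as $|\Lambda_d|$ grows; this is the intrinsic weakness of the argument flagged in the introduction and the reason why only quasi- rather than plain convergence can be hoped for in the lognormal setting.
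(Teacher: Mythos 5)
Your proof is correct, but it follows a genuinely different route from the paper's. The paper works with the \emph{squared} quantities via the ``third binomial formula'' $\eta^2(v_N)-\eta^2(w_N)=(\eta(v_N)-\eta(w_N))(\eta(v_N)+\eta(w_N))$, expands the estimators in the doubly orthogonal basis, applies the inverse estimates termwise to $\ddiv r_\mu(e)$ and $\llbracket r_\mu(e)\rrbracket$, and assembles everything by Cauchy--Schwarz into the fully explicit combinatorial constant $c(\Lambda_d)$ of~\eqref{eq:properties:lipschitz_det:proof:constant}. You instead apply the reverse triangle inequality twice (once in the $L^2$ norms and once pointwise in $\mathbb{R}^2$) to work directly with the \emph{differences} of estimators, use the doubly orthogonal basis only as a device to trade $\zeta_{\vartheta\rho}\,\mathrm{d}\pi_0$ for $\mathrm{d}\pi_{\vartheta\rho}$ at the price of $c_{\max}(\Lambda_d)=\max_\nu c_\nu$, recognize $\sum_{\mu\in\Lambda_d} r_\mu(e) P_\mu$ as the $L^2(\pi_{\vartheta\rho})$-orthogonal projection $\Pi_{\Lambda_d}(a_N\grad e)$, exploit its contractivity and commutation with spatial derivatives, apply the inverse estimates pointwise in $y$, and close with a multiplier estimate. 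Your version is noticeably more modular and avoids most of the index bookkeeping; the paper's yields a constant whose structure mirrors the triple-product sum~\eqref{eq:estimator:r} directly, which makes the comparison to~\cite[Lemma 5.5]{EigelGittelson2014convergence} and the bounded-coefficient corollary more transparent. Both constants deteriorate with $\lvert\Lambda_d\rvert$ for the same underlying reason, which you correctly flag. One small caveat applying to both arguments: $\eta_{\mathrm{det},\partial T}$ contains two-sided jumps over the edges touching $T$, so the estimate one actually obtains involves the patch $\omega_T$ rather than $T$ alone; the statement of the theorem with $\Vert\grad(v_N-w_N)\Vert_{\pi_{\vartheta\rho},T}$ is slightly imprecise in this respect, but harmlessly so since the result is only used after summing over $T$ where the finite overlap is controlled by shape regularity, as you indeed note.
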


\begin{proof}%
  Before we show the claim, we first need to some intermediate results.
  First we note that with the definitions~\eqref{eq:estimator:eta_det:vol} and~\eqref{eq:estimator:eta_det:jump} and Remark~\ref{rem:properties:doubly_orthogonal_polynomials}, it holds
  \begin{equation}%
    \label{eq:properties:lipschitz_det:proof:a}
    \eta_{\mathrm{det},T}(v_N, \Lambda_d)^2
    = h_T^2 \sum_{\nu\in\Lambda_d} c_\nu^2 \Vert \sum_{\mu\in\Lambda_d} (f\delta_{\mu 0} - \ddiv r_\mu(v_N)) z_{\mu\nu} \Vert_T^2
  \end{equation}
  and
  \begin{equation}%
    \label{eq:properties:lipschitz_det:proof:b}
    \eta_{\mathrm{det},\partial T}(v_N, \Lambda_d)^2
    = h_T \sum_{\nu\in\Lambda_d} c_\nu^2 \Vert \sum_{\mu\in\Lambda_d} \llbracket r_\mu(v_N) \rrbracket_{\partial T} z_{\mu\nu} \Vert_{\partial T}^2.
  \end{equation}
  Secondly, the definition of $r_\mu$~\eqref{eq:estimator:r} and the inverse estimates~\eqref{eq:setting:inverse_estimates} in combination with the triangle inequality yield
  \begin{equation}%
    \label{eq:properties:lipschitz_det:proof:c}
    \begin{aligned}
        &h_T \Vert \sum_{\mu\in\Lambda_d} \ddiv r_\mu(w_N-v_N) z_{\mu\nu} \Vert_T\\
        &\qquad\leq \sum_{\beta\in\Lambda_d} \Vert \grad(w_N-v_N)_\beta\Vert_T \Bigl( c_{\mathrm{inv}}\sum_{\alpha\in\Lambda_{d+\hat{d}-1}}\sum_{\mu\in\Lambda_d} \Vert a_\alpha \Vert_{L^\infty(D)} \vert \tau_{\alpha\beta\mu}z_{\mu\nu}\vert \Bigr)
    \end{aligned}
  \end{equation}
  and
  \begin{equation}%
    \label{eq:properties:lipschitz_det:proof:d}
    \begin{aligned}
        & h_T^{1/2} \Vert \sum_{\mu\in\Lambda_d} \llbracket r_\mu(w_N-v_N) \rrbracket_{\partial T} z_{\mu\nu} \Vert_{\partial T}\\
        &\qquad\leq \sum_{\beta\in\Lambda_d} \Vert \grad(w_N-v_N)_\beta\Vert_T \Bigl( c_{\mathrm{inv}}\sum_{\alpha\in\Lambda_{d+\hat{d}-1}}\sum_{\mu\in\Lambda_d} \Vert a_\alpha \Vert_{L^\infty(D)} \vert \tau_{\alpha\beta\mu}z_{\mu\nu}\vert \Bigr),
    \end{aligned}
  \end{equation}
  where we denote $a_\alpha = \sum_{k=1}^{J} \discretized{a}_N[k,\alpha]\varphi_k\in\mathcal{X}_p(\mathcal{T})$.
  Next we define for any $\nu\in\Lambda_d$
  \begin{equation*}
    s_{T,\nu} 
    := \Vert \sum_{\mu\in\Lambda_d} (f\delta_{\mu 0} - \ddiv r_\mu(v_N)) z_{\mu\nu}\Vert_T
    + \Vert \sum_{\mu\in\Lambda_d} (f\delta_{\mu 0} - \ddiv r_\mu(w_N)) z_{\mu\nu}\Vert_T
  \end{equation*}
  and
  \begin{equation*}
    s_{\partial T,\nu} 
    := \Vert \sum_{\mu\in\Lambda_d} \llbracket r_\mu(v_N) \rrbracket_{\partial T} z_{\mu\nu}\Vert_{\partial T}
    + \Vert \sum_{\mu\in\Lambda_d} \llbracket r_\mu(w_N) \rrbracket_{\partial T} z_{\mu\nu}\Vert_{\partial T}.
  \end{equation*}
  Using~\eqref{eq:properties:lipschitz_det:proof:a} and~\eqref{eq:properties:lipschitz_det:proof:b}, respectively, in combination with the triangle inequality, the third binomial formula, the inverse triangle inequality and~\eqref{eq:properties:lipschitz_det:proof:c} and~\eqref{eq:properties:lipschitz_det:proof:d} give
  \begin{align*}%
    & \vert \eta_{\mathrm{det},T}(v_N,\Lambda_d)^2 - \eta_{\mathrm{det},T}(w_N,\Lambda_d)^2\vert \notag\\
    &\qquad\leq h_T^2 \sum_{\nu\in\Lambda_d} c_\nu^2 \, s_{T,\nu}\, \Vert \sum_{\mu\in\Lambda_d} \ddiv r_\mu(w_N-v_N) z_{\mu\nu}\Vert_T \notag\\
    &\qquad\leq \sum_{\beta\in\Lambda_d} \Vert \grad(w_N-v_N)_\beta\Vert_T \Bigl( \sum_{\alpha\in\Lambda_{d+\hat{d}-1}}\sum_{\mu\in\Lambda_d} c_{\mathrm{inv}} \Vert a_\alpha \Vert_{L^\infty(D)} \vert \tau_{\alpha\beta\mu}\vert \sum_{\nu\in\Lambda_d} c_\nu^2 \,  h_T s_{T,\nu} \, \vert z_{\mu\nu}\vert \Bigr)
  \end{align*}
  and
  \begin{align*}%
    & \vert \eta_{\mathrm{det},\partial T}(v_N,\Lambda_d)^2 - \eta_{\mathrm{det},\partial T}(w_N,\Lambda_d)^2\vert \notag\\
    &\qquad\leq h_T \sum_{\nu\in\Lambda_d} c_\nu^2 \, s_{\partial T,\nu}\, \Vert \sum_{\mu\in\Lambda_d} \llbracket r_\mu(w_N-v_N) \rrbracket_{\partial T} z_{\mu\nu}\Vert_T \notag\\
    &\qquad\leq \sum_{\beta\in\Lambda_d} \Vert \grad(w_N-v_N)_\beta\Vert_T \Bigl( \sum_{\alpha\in\Lambda_{d+\hat{d}-1}}\sum_{\mu\in\Lambda_d} c_{\mathrm{inv}} \Vert a_\alpha \Vert_{L^\infty(D)} \vert \tau_{\alpha\beta\mu}\vert \sum_{\nu\in\Lambda_d} c_\nu^2 \,  h_T^{1/2} s_{\partial T,\nu} \, \vert z_{\mu\nu}\vert \Bigr),
  \end{align*}
  which combines to
  \begin{align}%
    \label{eq:properties:lipschitz_det:proof:e}
    & \vert \eta_{\mathrm{det},T}(v_N,\Lambda_d)^2 - \eta_{\mathrm{det},T}(w_N,\Lambda_d)^2\vert  + \vert \eta_{\mathrm{det},\partial T}(v_N,\Lambda_d)^2 - \eta_{\mathrm{det},\partial T}(w_N,\Lambda_d)^2\vert \notag\\
    &\qquad\leq \sum_{\beta\in\Lambda_d} \Vert \grad(w_N-v_N)_\beta\Vert_T \Bigl( \sum_{\alpha\in\Lambda_{d+\hat{d}-1}}\sum_{\mu\in\Lambda_d} c_{\mathrm{inv}} \Vert a_\alpha \Vert_{L^\infty(D)} \vert \tau_{\alpha\beta\mu}\vert \sum_{\nu\in\Lambda_d} c_\nu^2 \,  (h_T s_{T,\nu} + h_T^{1/2} s_{\partial T,\nu}) \, \vert z_{\mu\nu}\vert \Bigr).
  \end{align}
  Note that by the Cauchy-Schwarz inequality, triangle inequality, Remark~\ref{rem:properties:doubly_orthogonal_polynomials} and~\eqref{eq:properties:lipschitz_det:proof:a}--\eqref{eq:properties:lipschitz_det:proof:b} we have the estimate
  \begin{align*}
    \sum_{\nu\in\Lambda_d} c_\nu^2 \,  (h_T s_{T,\nu} + h_T^{1/2} s_{\partial T,\nu}) \, \vert z_{\mu\nu}\vert
    &\leq \Bigl( \sum_{\nu\in\Lambda_d} c_\nu^2 \vert z_{\mu\nu}\vert^2 \Bigr)^{1/2} \Bigl( \sum_{\nu\in\Lambda_d} c_\nu^2 \,  (h_T s_{T,\nu} + h_T^{1/2} s_{\partial T,\nu})^2 \Bigr)^{1/2}\\
    &\leq 2 \Vert P_\mu\zeta_{\vartheta\rho}\Vert_{\pi_0} \Bigl( \left.\eta_{\mathrm{det}}(v_N,\Lambda_d)\right|_T + \left.\eta_{\mathrm{det}}(w_N,\Lambda_d)\right|_T \Bigr).
  \end{align*}
  The combination of this estimate, the Cauchy-Schwarz inequality,~\eqref{eq:properties:lipschitz_det:proof:e} and the definition
  \begin{equation}
    \label{eq:properties:lipschitz_det:proof:constant}
    c(\Lambda_d)^2 
    := \sum_{\beta\in\Lambda_d} \Bigl( \sum_{\mu\in\Lambda_d}\sum_{\alpha\in\Lambda_{d+\hat{d}-1}}  \Vert a_\alpha\Vert_{L^\infty(D)}\vert\tau_{\alpha\beta\mu}\vert \Vert P_\mu\zeta_{\vartheta\rho}\Vert_{\pi_0} \Bigr)^2
  \end{equation}
  now yield
  \begin{align}%
    \label{eq:properties:lipschitz_det:proof:f}
    &\vert \eta_{\mathrm{det},T}(v_N,\Lambda_d)^2 - \eta_{\mathrm{det},T}(w_N,\Lambda_d)^2\vert  + \vert \eta_{\mathrm{det},\partial T}(v_N,\Lambda_d)^2 - \eta_{\mathrm{det},\partial T}(w_N,\Lambda_d)^2\vert \notag\\
    &\qquad\leq 2 c_{\mathrm{inv}}c(\Lambda_d)\Bigl( \left.\eta_{\mathrm{det}}(v_N,\Lambda_d)\right|_T + \left.\eta_{\mathrm{det}}(w_N,\Lambda_d)\right|_T \Bigr)\Bigl(\sum_{\beta\in\Lambda_d} \Vert \grad(w_N-v_N)_\beta\Vert_T^2\Bigr)^{1/2}\notag\\
    &\qquad= 2 c_{\mathrm{inv}}c(\Lambda_d)\Bigl( \left.\eta_{\mathrm{det}}(v_N,\Lambda_d)\right|_T + \left.\eta_{\mathrm{det}}(w_N,\Lambda_d)\right|_T \Bigr)\Vert \grad(w_N-v_N)\Vert_{\pi_{\vartheta\rho}, T}.
  \end{align}
  With~\eqref{eq:properties:lipschitz_det:proof:f} established, we conclude the proof with the simple computation
  \begin{align*}%
    &\Bigl\vert \left.\eta_{\mathrm{det}}(v_N,\Lambda_d)\right|_T - \left.\eta_{\mathrm{det}}(w_N,\Lambda_d)\right|_T \Bigr\vert \Bigl( \left.\eta_{\mathrm{det}}(v_N,\Lambda_d)\right|_T + \left.\eta_{\mathrm{det}}(w_N,\Lambda_d)\right|_T \Bigr) \\
    &\qquad = \left\vert \left.\eta_{\mathrm{det}}(v_N,\Lambda_d)\right|_T^2 - \left.\eta_{\mathrm{det}}(w_N,\Lambda_d)\right|_T^2 \right\vert \\
    &\qquad \leq \vert \eta_{\mathrm{det},T}(v_N,\Lambda_d)^2 - \eta_{\mathrm{det},T}(w_N,\Lambda_d)^2\vert  + \vert \eta_{\mathrm{det},\partial T}(v_N,\Lambda_d)^2 - \eta_{\mathrm{det},\partial T}(w_N,\Lambda_d)^2\vert \\
    &\qquad \leq 2 c_{\mathrm{inv}}c(\Lambda_d)\Bigl( \left.\eta_{\mathrm{det}}(v_N,\Lambda_d)\right|_T + \left.\eta_{\mathrm{det}}(w_N,\Lambda_d)\right|_T \Bigr)\Vert \grad(w_N-v_N)\Vert_{\pi_{\vartheta\rho}, T}.
  \end{align*}
 
\end{proof}

\begin{remark}
    \label{rem:properties:lipschitz_constant_det}
    Independent of the finiteness of the expansion of $a$, the constant $c(\Lambda_d)$ is always finite due to the recursive structure of the Hermite polynomials.
    However, $c(\Lambda_d)$ deteriorates if $\Lambda_d$ is enlarged, as Remark~\ref{rem:properties:doubly_orthogonal_polynomials} implies $\Vert P_\mu\zeta_{\vartheta\rho}\Vert_{\pi_0}\to\infty$ when $\vert\mu\vert\to\infty$.
    This is a direct consequence of the unboundedness of $a$ and hence the required introduction of adapted function spaces.
\end{remark}

In~\cite[Lemma 5.5]{EigelGittelson2014convergence} the authors assume a bound on the operator norm of the multiplication by $a$ instead of $a\in\mathcal{V}(\Lambda_d; \mathcal{T}, p)$, which would imply
\begin{equation}
  \label{eq:properties:bounded_multiplication}
  K_\beta = \sum_{\mu\in\mathcal{F}}\sum_{\alpha\in\mathcal{F}}\Vert a_\alpha\Vert_{L^\infty(D)} \, \vert\tau_{\alpha\beta\mu}\vert \leq c
  \qquad\mbox{for all }\beta\in\Lambda_d
\end{equation}
with a constant $c>0$ independent of $\beta$ for the general non-affine case.
The constant~\eqref{eq:properties:lipschitz_det:proof:constant} demonstrates that a similar bound is not possible for an unbounded $a$, since no decay in the coefficients $a_\alpha$ can counteract the deterioration of $\Vert P_\mu \zeta_{\vartheta\rho}\Vert_{\pi_0}$ as $\Lambda_d$ is enlarged.
For bounded $a$, however, well-posedness of~\eqref{eq:introduction:darcy} is given without the need for adapted function spaces and thus $\Vert P_\mu \zeta_{\vartheta\rho}\Vert_{\pi_0}=1$ independent of $\mu$.
As a consequence, a restriction of Theorem~\ref{thm:properties:lipschitz_det} to a bounded $a$ yields a generalization of~\cite[Lemma 5.5]{EigelGittelson2014convergence} to arbitrary bounded diffusion coefficients.

\begin{corollary}%
  \label{cor:properties:lipschitz_det_simple}
  If there exist constants $\check{c},\hat{c}>0$ such that $\check{c} \leq a(x,y) \leq \hat{c}$ uniformly for all $x\in D$ and almost all $y\in\Gamma$, then
  \begin{equation*}
    \bigl\vert \left.\eta_{\mathrm{det}}(v_N,\Lambda_d)\right|_T - \left.\eta_{\mathrm{det}}(w_N,\Lambda_d)\right|_T\bigr\vert
    \leq \sqrt{2}\, c_{\mathrm{inv}}\hat{c}\, \Vert \grad(v_N-w_N)\Vert_{\pi_{0},T},
  \end{equation*}
  where $c_{\mathrm{inv}}$ is the constant in the inverse estimates~\eqref{eq:setting:inverse_estimates}.
\end{corollary}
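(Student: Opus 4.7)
The plan is to specialize the argument behind Theorem~\ref{thm:properties:lipschitz_det} to the uniformly bounded setting, in which almost all of the weighted-measure machinery collapses. First I would choose $\vartheta\rho = 0$, which is admissible because uniform boundedness of $a$ already ensures well-posedness of~\eqref{eq:setting:varriational_form} without any adapted solution space. Then $\zeta_{\vartheta\rho}\equiv 1$, $\pi_{\vartheta\rho}=\pi_0$, the basis $\{P_\mu\}$ is already orthonormal in $L^2(\Gamma,\pi_0)$, and the doubly orthogonal transformation of Remark~\ref{rem:properties:doubly_orthogonal_polynomials} reduces to the identity ($z_{\mu\nu}=\delta_{\mu\nu}$, $c_\mu=1$, and $\Vert P_\mu\zeta_{\vartheta\rho}\Vert_{\pi_0}=1$). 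Consequently every factor in the constant~\eqref{eq:properties:lipschitz_det:proof:constant} responsible for the $\Lambda_d$-dependence highlighted in Remark~\ref{rem:properties:lipschitz_constant_det} becomes trivial.

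Next, applying the reverse triangle inequality directly to the definitions~\eqref{eq:estimator:eta_det:vol}--\eqref{eq:estimator:eta_det:jump} with $\phi_N:=v_N-w_N$ (so that the source term $f$ cancels), I would obtain
\begin{align*}
  \bigl| \eta_{\mathrm{det},T}(v_N,\Lambda_d) - \eta_{\mathrm{det},T}(w_N,\Lambda_d)\bigr|
  &\leq h_T \Bigl\Vert \sum_{\mu\in\Lambda_d} \ddiv r_\mu(\phi_N)\,P_\mu \Bigr\Vert_{\pi_0,T}, \\
  \bigl| \eta_{\mathrm{det},\partial T}(v_N,\Lambda_d) - \eta_{\mathrm{det},\partial T}(w_N,\Lambda_d)\bigr|
  &\leq h_T^{1/2} \Bigl\Vert \sum_{\mu\in\Lambda_d} \llbracket r_\mu(\phi_N)\rrbracket_{\partial T}\,P_\mu \Bigr\Vert_{\pi_0,\partial T}.
\end{align*}
Since $\sum_{\mu\in\Lambda_d} r_\mu(\phi_N)\,P_\mu$ is the $L^2(\Gamma,\pi_0)$-orthogonal projection of $r(\phi_N)=a_N\grad\phi_N$ onto $\operatorname{span}\{P_\mu\}_{\mu\in\Lambda_d}$, both right-hand sides are bounded by the same quantities evaluated at $\ddiv(a_N\grad\phi_N)$ and $\llbracket a_N\grad\phi_N\rrbracket_{\partial T}$ respectively, without any projection.

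I would then apply the inverse estimates~\eqref{eq:setting:inverse_estimates} pointwise in $y\in\Gamma$ to the piecewise polynomial field $a_N(\cdot,y)\grad\phi_N(\cdot,y)$, using the uniform bound $|a_N|\leq \hat c$; these are precisely the scalar versions of the bounds~\eqref{eq:properties:lipschitz_det:proof:c}--\eqref{eq:properties:lipschitz_det:proof:d} with all coefficient sums replaced by a single factor $\hat c$. Integrating against $\pi_0$ then yields the per-component Lipschitz bounds
\begin{equation*}
  \bigl| \eta_{\mathrm{det},T}(v_N) - \eta_{\mathrm{det},T}(w_N)\bigr|
  \leq c_{\mathrm{inv}}\hat c\,\Vert \grad\phi_N\Vert_{\pi_0,T}, \quad
  \bigl| \eta_{\mathrm{det},\partial T}(v_N) - \eta_{\mathrm{det},\partial T}(w_N)\bigr|
  \leq c_{\mathrm{inv}}\hat c\,\Vert \grad\phi_N\Vert_{\pi_0,T}.
\end{equation*}
Combining these via the Euclidean reverse triangle inequality applied to $\left.\eta_{\mathrm{det}}\right|_T^2 = \eta_{\mathrm{det},T}^2 + \eta_{\mathrm{det},\partial T}^2$ produces the factor $\sqrt{2}$ and the claim. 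No essential obstacle is expected: the role of the corollary is to make explicit that the $\Lambda_d$-dependent blow-up of the Lipschitz constant in Theorem~\ref{thm:properties:lipschitz_det} is entirely attributable to the unboundedness of the coefficient and vanishes as soon as a uniform bound is available.
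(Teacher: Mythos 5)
Your proposal is correct, and the route differs from the paper's in a way worth noting. Both proofs begin by setting $\vartheta=0$ so that $\zeta_{\vartheta\rho}\equiv 1$, the basis is $\pi_0$-orthonormal, and the doubly orthogonal transformation is trivial. From there the paper mimics the structure of the general lognormal proof (Theorem~\ref{thm:properties:lipschitz_det}): it works with the \emph{squared} estimator contributions, applies the factorization $|a^2-b^2|=|a-b|\,(a+b)$, bounds each squared difference using the orthogonality of $\{P_\mu\}$ and the inverse estimates, and then divides by the sum $\eta_{\mathrm{det}}(v_N)|_T+\eta_{\mathrm{det}}(w_N)|_T$, with the $\sqrt{2}$ coming from the inequality $\eta_{\mathrm{det},T}+\eta_{\mathrm{det},\partial T}\leq\sqrt{2}\,\eta_{\mathrm{det}}|_T$. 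You instead apply the reverse triangle inequality of the $L^2(\Gamma,\pi_0)$-norm \emph{directly} to~\eqref{eq:estimator:eta_det:vol}--\eqref{eq:estimator:eta_det:jump} (killing $f$ immediately), invoke Bessel's inequality to drop the restriction to $\Lambda_d$ in favor of the full piecewise polynomial field $a_N\grad\phi_N$, then apply the inverse estimates pointwise in $y$, and finally obtain the $\sqrt{2}$ from the reverse triangle inequality in $\mathbb{R}^2$ applied to $\eta_{\mathrm{det}}|_T=(\eta_{\mathrm{det},T}^2+\eta_{\mathrm{det},\partial T}^2)^{1/2}$. Your path avoids the quadratic factorization and is arguably the more transparent argument for the bounded case, though it loses the structural parallel with the general Theorem~\ref{thm:properties:lipschitz_det} that the paper maintains. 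Both yield the stated constant $\sqrt{2}\,c_{\mathrm{inv}}\hat c$ and share the same (standard) liberty in applying the inverse estimates of~\eqref{eq:setting:inverse_estimates} to $a_N\grad\phi_N$, whose element-wise polynomial degree exceeds that of $\grad\mathcal{X}_p(\mathcal{T})$, and in absorbing the jump's dependence on the neighboring element into a $T$-local norm.
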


\begin{proof}%
  By the boundedness of $a$ we have ellipticity of~\eqref{eq:setting:B} and can choose $\vartheta=0$, which implies $\zeta_{\vartheta\rho}=1$ and thus $\pi_{\vartheta\rho}=\pi_0$.
  Similar to the proof of Theorem~\ref{thm:properties:lipschitz_det} we can use the third binomial formula and the triangle inequality to get
  \begin{equation}%
    \label{eq:properties:lipschitz_det_simple:proof:a}
    \begin{split}
      &\Bigl\vert \left.\eta_{\mathrm{det}}(v_N,\Lambda_d)\right|_T - \left.\eta_{\mathrm{det}}(w_N,\Lambda_d)\right|_T \Bigr\vert \Bigl( \left.\eta_{\mathrm{det}}(v_N,\Lambda_d)\right|_T + \left.\eta_{\mathrm{det}}(w_N,\Lambda_d)\right|_T \Bigr) \\
      &\qquad \leq \vert \eta_{\mathrm{det},T}(v_N,\Lambda_d)^2 - \eta_{\mathrm{det},T}(w_N,\Lambda_d)^2\vert  + \vert \eta_{\mathrm{det},\partial T}(v_N,\Lambda_d)^2 - \eta_{\mathrm{det},\partial T}(w_N,\Lambda_d)^2\vert \\
    \end{split}
  \end{equation}
  Using a combination of the third binomial formula, the orthonormality of ${P_\mu}$ and the second inverse estimate in~\eqref{eq:setting:inverse_estimates} allows us to estimate the first term of the right hand side of~\eqref{eq:properties:lipschitz_det_simple:proof:a}
  \begin{align}%
    \label{eq:properties:lipschitz_det_simple:proof:b}
    &\vert \eta_{\mathrm{det},T}(v_N,\Lambda_d)^2 - \eta_{\mathrm{det},T}(w_N,\Lambda_d)^2\vert \notag\\
    &\qquad\leq \Bigl\vert \int_{\Gamma}\int_{T} \Bigl( h_T \sum_{\mu\in\Lambda_d\cup\partial\Lambda_d} \ddiv r_\mu(w_N-v_N) P_\mu \Bigr) \Bigl( h_T\sum_{\mu\in\Lambda_d} (2f\delta_{\mu 0} - \ddiv r_\mu(v_N+w_N)) P_\mu\Bigr)\,\mathrm{d}x  \,\mathrm{d}\pi_{0}(y) \Bigr\vert \notag\\
    &\qquad\leq h_T \Vert \ddiv (a \grad(w_N-v_N))\Vert_{\pi_0, T} \Bigl( \eta_{\mathrm{det,T}}(v_N, \Lambda_d) + \eta_{\mathrm{det,T}}(w_N, \Lambda_d) \Bigr)\notag\\ 
    &\qquad\leq \hat{c}\,c_{\mathrm{inv}}\, \Vert \grad(w_N-v_N)\Vert_{\pi_0, T} \Bigl( \eta_{\mathrm{det,T}}(v_N, \Lambda_d) + \eta_{\mathrm{det,T}}(w_N, \Lambda_d) \Bigr).
  \end{align}
  Following the same arguments, but using the first inverse estimate in~\eqref{eq:setting:inverse_estimates} instead, provides an estimate for the second term on the right hand side of~\eqref{eq:properties:lipschitz_det_simple:proof:a}
  \begin{equation}%
    \label{eq:properties:lipschitz_det_simple:proof:c}
    \begin{split}
      &\vert \eta_{\mathrm{det},\partial T}(v_N,\Lambda_d)^2 - \eta_{\mathrm{det},\partial T}(w_N,\Lambda_d)^2\vert\\
      &\qquad\leq \hat{c}\,c_{\mathrm{inv}}\, \Vert \grad(w_N-v_N)\Vert_{\pi_0, T} \Bigl( \eta_{\mathrm{det,\partial T}}(v_N, \Lambda_d) + \eta_{\mathrm{det,\partial T}}(w_N, \Lambda_d) \Bigr).
    \end{split}
  \end{equation}
  The claim then follows by
  \begin{equation*}
    \eta_{\mathrm{det,T}}(v_N, \Lambda_d) + \eta_{\mathrm{det,\partial T}}(v_N, \Lambda_d)
    \leq \sqrt{2} \left.\eta_{\mathrm{det}}(v_N, \Lambda_d)\right\vert_T
    \qquad\mbox{for all }v_N\in\mathcal{V}_N.
  \end{equation*}
\end{proof}

\begin{remark}
    In the proof of Corollary~\ref{cor:convergence:convergence_bounded_a} we make use of $a\in\mathcal{V}_N(\Lambda_{\hat d}; \mathcal{T}, p)$ to apply the inverse estimates~\eqref{eq:setting:inverse_estimates}.
    The same result with a slightly different constant can be obtained for general bounded $a$ if we assume bounds for the diffusion coefficient as in~\eqref{eq:properties:bounded_multiplication} and its gradients, i.e.,
    \begin{equation*}
        \Vert h_T \grad a_\alpha \Vert_{L^\infty(D)} 
        \leq c \, \Vert a_\alpha \Vert_{L^\infty(D)}
    \end{equation*}
    for all $\alpha\in\Lambda_{\hat{d}}$ with some constant $c>0$ depending on the stochastic index set and the finite element mesh.
    In this case, Corollary~\ref{cor:convergence:convergence_bounded_a} provides a direct generalization of~\cite[Lemma 5.5]{EigelGittelson2014convergence} to arbitrary uniformly bounded diffusion coefficients.
\end{remark}

\begin{theorem}[continuity of $\eta_{\mathrm{det}}$ in the third component]%
  \label{thm:properties:continuity_det}
  Let $0\in\Lambda_d\subset\hat\Lambda\subset\mathbb{N}_0^{\hat M}$ be arbitrary sets, $\Delta = \hat\Lambda\cap\partial\Lambda_d$ and $w_N\in\mathcal{V}_N=\mathcal{V}_N(\Lambda_d;\mathcal{T},p)$.
  Then there exists a constant $\tilde{c}_{\mathrm{det}}>0$ such that
  \begin{equation*}
    \eta_{\mathrm{det}}(v_N,\mathcal{T},\hat\Lambda\setminus\Lambda_d)
    \leq \tilde{c}_{\mathrm{det}} \, \eta_{\mathrm{sto}}(v_N, \Delta).
  \end{equation*}
\end{theorem}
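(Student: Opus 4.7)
The strategy is to reduce the deterministic estimator contribution on the mode set $\hat\Lambda\setminus\Lambda_d$ to the $L^2$ norm of the flux expansion $r(w_N)$ on the same modes via the inverse estimates~\eqref{eq:setting:inverse_estimates}, which is precisely $\eta_{\mathrm{sto}}(w_N,\Delta)$. The crucial input is that $w_N\in\mathcal{V}_N(\Lambda_d;\mathcal{T},p)$ has coefficients supported in $\Lambda_d$, so by~\eqref{eq:estimator:r} the mode $r_\mu(w_N)$ is non-zero only for $\mu\in\Lambda_{d+\hat d-1}$. Hence the sum defining $\eta_{\mathrm{det}}(w_N,\mathcal{T},\hat\Lambda\setminus\Lambda_d)$ effectively reduces to $\mu\in\Delta=\hat\Lambda\cap\partial\Lambda_d$. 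Moreover, since $0\in\Lambda_d$ and $\Delta\subseteq\partial\Lambda_d$, we have $\delta_{\mu 0}=0$ for each $\mu\in\Delta$, so the forcing term $f$ disappears from~\eqref{eq:estimator:eta_det:vol}.

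Next, I would apply the doubly orthogonal basis $\{\hat P_\nu\}$ of $\operatorname{span}\{P_\mu:\mu\in\Delta\}$ from Remark~\ref{rem:properties:doubly_orthogonal_polynomials} with transformation matrix $(z_{\mu\nu})$, exactly as in~\eqref{eq:properties:lipschitz_det:proof:a}--\eqref{eq:properties:lipschitz_det:proof:b}, to obtain
\begin{equation*}
\eta_{\mathrm{det},T}(w_N,\Delta)^2 = h_T^2\sum_{\nu}c_\nu^2 \bigl\|\sum_{\mu\in\Delta}\ddiv r_\mu(w_N)\, z_{\mu\nu}\bigr\|_T^2,
\end{equation*}
and the analogous representation of $\eta_{\mathrm{det},\partial T}(w_N,\Delta)^2$ involving $\llbracket r_\mu(w_N)\rrbracket_{\partial T}$. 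For each fixed $\nu$ the function $\sum_{\mu\in\Delta}r_\mu(w_N)\,z_{\mu\nu}$ is piecewise polynomial on $\mathcal{T}$, so a direct application of the inverse estimates~\eqref{eq:setting:inverse_estimates} gives
\begin{equation*}
h_T\bigl\|\ddiv\!\sum_{\mu}r_\mu z_{\mu\nu}\bigr\|_T + h_T^{1/2}\bigl\|\bigl\llbracket \sum_\mu r_\mu z_{\mu\nu}\bigr\rrbracket_{\partial T}\bigr\|_{\partial T} \leq c_{\mathrm{inv}}\bigl\|\sum_\mu r_\mu z_{\mu\nu}\bigr\|_{\tilde\omega_T},
\end{equation*}
where $\tilde\omega_T$ is the patch formed by $T$ and its edge-neighbours.

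Summing over $T\in\mathcal{T}$ and using that each triangle belongs to only a bounded number of such patches, I arrive at
\begin{equation*}
\eta_{\mathrm{det}}(w_N,\mathcal{T},\Delta)^2 \leq \tilde c\, c_{\mathrm{inv}}^2 \sum_{T\in\mathcal{T}}\sum_{\nu}c_\nu^2 \bigl\|\sum_{\mu\in\Delta} r_\mu(w_N)z_{\mu\nu}\bigr\|_T^2.
\end{equation*}
The final step is to reverse the doubly orthogonal transformation pointwise in $x$: for any coefficients $\phi_\mu(x)$ the identity $\sum_\nu c_\nu^2|\sum_\mu \phi_\mu z_{\mu\nu}|^2 = \int_\Gamma |\sum_\mu \phi_\mu P_\mu \zeta_{\vartheta\rho}|^2\dd\pi_0$ holds by Remark~\ref{rem:properties:doubly_orthogonal_polynomials}. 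Integrating in $x$ over $T$ and summing yields $\|\sum_{\mu\in\Delta} r_\mu(w_N)P_\mu\zeta_{\vartheta\rho}\|_{\pi_0,D}^2 = \eta_{\mathrm{sto}}(w_N,\Delta)^2$, which produces the claim with $\tilde c_{\mathrm{det}}$ proportional to $c_{\mathrm{inv}}$ and the patch-overlap constant.

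The main technical delicacy I anticipate is that the inverse estimates~\eqref{eq:setting:inverse_estimates} are stated for functions in $\mathcal{X}_p(\mathcal{T})$ and their gradients, whereas each $r_\mu(w_N)$ is piecewise polynomial of higher degree due to the product $a_N\grad w_N$; one needs the standard extension to piecewise polynomials of any fixed degree, which only affects the constant. Notably, in contrast to Theorem~\ref{thm:properties:lipschitz_det}, no stochastic factor of the form $\|P_\mu \zeta_{\vartheta\rho}\|_{\pi_0}$ enters $\tilde c_{\mathrm{det}}$, because the doubly orthogonal transformation is applied on both sides of the inequality and cancels out.
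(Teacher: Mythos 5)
Your proposal follows essentially the same route as the paper's proof: restrict to $\Delta$ because $r_\mu(w_N)=0$ outside $\Lambda_d\cup\partial\Lambda_d$, drop the $f$ term since $0\notin\Delta$, pass to the doubly orthogonal basis of Remark~\ref{rem:properties:doubly_orthogonal_polynomials}, apply the inverse estimates~\eqref{eq:setting:inverse_estimates} mode-by-mode, and transform back to recover $\eta_{\mathrm{sto}}(w_N,\Delta)$. The paper writes the jump bound directly in terms of the element $T$ (giving $\tilde c_{\mathrm{det}}=\sqrt{2}\,c_{\mathrm{inv}}$), whereas you are slightly more careful to note that the jump trace draws on the edge-neighbour patch and that the inverse estimate must be applied to piecewise polynomials of higher degree than $p$; these observations only shift the constant and do not alter the argument.
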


\begin{proof}%
  Since $0\not\in\Delta$, it holds
  \begin{align*}%
    \eta_{\mathrm{det}, T}(v_N, \Delta)^2
    &= h_T^2 \Vert \sum_{\mu\in\Delta} \ddiv r_\mu(v_N) P_\mu \zeta_{\vartheta\rho} \Vert_{\pi_0, T}^2
    = h_T^2 \sum_{\nu\in\Delta} c_\nu^2 \Vert \sum_{\mu\in\Delta} \ddiv r_\mu(v_N) z_{\mu\nu} \Vert_{T}^2\\
    &\leq c_{\mathrm{inv}}^2 \sum_{\nu\in\Delta} c_\nu^2 \Vert \sum_{\mu\in\Delta} r_\mu(v_N) z_{\mu\nu} \Vert_{T}^2
    = c_{\mathrm{inv}}^2 \Vert \sum_{\mu\in\Delta} r_\mu(v_N) P_\mu \zeta_{\vartheta\rho} \Vert_{\pi_0, T}^2,
  \end{align*}
  where we used Remark~\ref{rem:properties:doubly_orthogonal_polynomials},~\eqref{eq:properties:lipschitz_det:proof:a} and the second inverse estimate in~\eqref{eq:setting:inverse_estimates}.
  Similarily, using~\eqref{eq:properties:lipschitz_det:proof:b} and the first inverse estimate of~\eqref{eq:setting:inverse_estimates}, we get
  \begin{equation*}
    \eta_{\mathrm{det}, \partial T}(v_N, \Delta)^2
    \leq c_{\mathrm{inv}}^2 \Vert \sum_{\mu\in\Delta} r_\mu(v_N) P_\mu \zeta_{\vartheta\rho} \Vert_{\pi_0, T}^2,
  \end{equation*}
  We conclude the proof with the following estimate
  \begin{align*}%
    \eta_{\mathrm{det}}(v_N, \mathcal{T}, \hat{\Lambda}\setminus\Lambda_d)^2
    &= \sum_{T\in \mathcal{T}} \Bigl(\eta_{\mathrm{det}, T}(v_N, \Delta)^2 + \eta_{\mathrm{det}, \partial T}(v_N, \Delta)^2 \Bigr)\\
    &\leq 2c_{\mathrm{inv}}^2 \sum_{T\in \mathcal{T}} \Vert \sum_{\mu\in\Delta} r_\mu(v_N)P_\mu\zeta_{\vartheta\rho}\Vert_{\pi_0, T}^2\\
    &= 2c_{\mathrm{inv}}^2 \eta_{\mathrm{sto}}(v_N,\Delta)^2,
  \end{align*}
  where we used that $r_\mu(v_N)=0$ for any $\mu\in\hat\Lambda\setminus(\Lambda_d\cup\partial\Lambda_d)$.
  
\end{proof}

\begin{remark}%
  For the special case $\hat\Lambda\subseteq\Lambda_d\cup\partial\Lambda_d$, we have $\Delta=\hat\Lambda\cap\partial\Lambda_d=\hat\Lambda\setminus\Lambda_d$ and the inequality in Theorem~\ref{thm:properties:continuity_det} simplifies to
  \begin{equation*}
    \eta_{\mathrm{det}}(v_N,\mathcal{T},\Delta)
    \leq \tilde{c}_{\mathrm{det}} \, \eta_{\mathrm{sto}}(v_N, \Delta).
  \end{equation*}
\end{remark}

\subsection{Stochastic estimator contribution}%
\label{sec:properties:sto}

In this section we establish that the stochastic estimator contribution is Lipschitz continuous in the first component. We also introduce the quasi additivity of $\eta_{\mathrm{sto}}$ in the stochastic index set, which visualizes one of the key differences between a lognormal and a bounded affine diffusion coefficient.

\begin{theorem}[Lipschitz continuity of $\eta_{\mathrm{sto}}$ in the first component]%
  \label{thm:properties:lipschitz_sto}
  For any $v_N,w_N\in\mathcal{V}_N=\mathcal{V}_N(\Lambda_d;\mathcal{T},p)$ there exists a constant $c_{\mathrm{sto}}>0$ depending only on the boundary of the active set $\partial\Lambda_d$ such that
  \begin{equation*}
    \vert \eta_{\mathrm{sto}}(v_N,\partial\Lambda_d) - \eta_{\mathrm{sto}}(w_N,\partial\Lambda_d)\vert
    \leq c_{\mathrm{sto}} \Vert \grad(v_N-w_N)\Vert_{\pi_{\vartheta\rho},D}.
  \end{equation*}
\end{theorem}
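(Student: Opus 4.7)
Since $r_\mu$ depends linearly on its argument through~\eqref{eq:estimator:r}, we have $r_\mu(v_N) - r_\mu(w_N) = r_\mu(v_N - w_N)$, and the reverse triangle inequality on $\Vert\cdot\Vert_{\pi_0,D}$ gives immediately
\begin{equation*}
\vert \eta_{\mathrm{sto}}(v_N,\partial\Lambda_d) - \eta_{\mathrm{sto}}(w_N,\partial\Lambda_d)\vert \leq \Vert \sum_{\mu\in\partial\Lambda_d} r_\mu(v_N - w_N) P_\mu \zeta_{\vartheta\rho}\Vert_{\pi_0,D}.
\end{equation*}
Setting $e_N := v_N - w_N$, it remains to bound the right-hand side by $c_{\mathrm{sto}}\Vert \grad e_N\Vert_{\pi_{\vartheta\rho},D}$. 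This reduction is cleaner than in the $\eta_{\mathrm{det}}$ case because no source term $f$ appears, so the $(a+b)(a-b)$-style manipulation used in Theorem~\ref{thm:properties:lipschitz_det} is unnecessary.

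The plan is then to mirror the diagonalization step~\eqref{eq:properties:lipschitz_det:proof:a}: I apply the doubly orthogonal basis $\{\hat P_\nu\}_{\nu\in\partial\Lambda_d}$ from Remark~\ref{rem:properties:doubly_orthogonal_polynomials} with transformation matrix $P_\mu = \sum_\nu z_{\mu\nu}\hat P_\nu$, so that the $\zeta_{\vartheta\rho}^2$-weighted integral over $\pi_0$ becomes diagonal,
\begin{equation*}
\Vert \sum_\mu r_\mu(e_N) P_\mu \zeta_{\vartheta\rho}\Vert_{\pi_0,D}^2 = \sum_{\nu\in\partial\Lambda_d} c_\nu^2 \Vert \sum_{\mu\in\partial\Lambda_d} r_\mu(e_N) z_{\mu\nu}\Vert_D^2.
\end{equation*}
Inserting~\eqref{eq:estimator:r} with $a_{\hat\alpha} := \sum_k\discretized{a_N}[k,\hat\alpha]\varphi_k$ and $(\grad e_N)_\alpha := \sum_j \discretized{e_N}[j,\alpha]\grad\varphi_j$, and applying the triangle inequality, I obtain the pointwise estimate
\begin{equation*}
\Vert\sum_\mu r_\mu(e_N)z_{\mu\nu}\Vert_D \leq \sum_{\alpha\in\Lambda_d}\Vert(\grad e_N)_\alpha\Vert_D \sum_{\hat\alpha\in\Lambda_{\hat d}}\sum_{\mu\in\partial\Lambda_d}\Vert a_{\hat\alpha}\Vert_{L^\infty(D)}|z_{\mu\nu}||\tau_{\alpha\hat\alpha\mu}|.
\end{equation*}

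Two applications of Cauchy-Schwarz then conclude. The first, in the $\alpha$-sum, extracts $\bigl(\sum_\alpha\Vert(\grad e_N)_\alpha\Vert_D^2\bigr)^{1/2} = \Vert\grad e_N\Vert_{\pi_{\vartheta\rho},D}$ by orthonormality of $\{P_\alpha\}$ in $L^2(\Gamma,\pi_{\vartheta\rho})$. Expanding the square of the remaining $\mu$-sum yields a double sum in $\mu,\mu'$; a second Cauchy-Schwarz in $\nu$ applied to the cross terms $|z_{\mu\nu}||z_{\mu'\nu}|$, combined with the identity $\sum_{\nu\in\partial\Lambda_d} c_\nu^2 z_{\mu\nu}^2 = \Vert P_\mu\zeta_{\vartheta\rho}\Vert_{\pi_0}^2$ (immediate from expanding $P_\mu$ in $\{\hat P_\nu\}$ and using the second orthogonality relation of Remark~\ref{rem:properties:doubly_orthogonal_polynomials}), collapses everything to the constant
\begin{equation*}
c_{\mathrm{sto}}^2 := \sum_{\alpha\in\Lambda_d}\Bigl(\sum_{\mu\in\partial\Lambda_d}\sum_{\hat\alpha\in\Lambda_{\hat d}}\Vert a_{\hat\alpha}\Vert_{L^\infty(D)}|\tau_{\alpha\hat\alpha\mu}|\Vert P_\mu\zeta_{\vartheta\rho}\Vert_{\pi_0}\Bigr)^2,
\end{equation*}
which is structurally identical to~\eqref{eq:properties:lipschitz_det:proof:constant} and finite by Corollary~\ref{cor:properties:polynom_integration}. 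The main obstacle is purely combinatorial: with four nested summations over $\alpha,\hat\alpha,\mu,\nu$, the two Cauchy-Schwarz steps must be applied in the correct order so that the $\grad e_N$-factor ends up in the natural $\pi_{\vartheta\rho}$-norm while the residual coefficient reduces to the familiar quantity $\Vert P_\mu\zeta_{\vartheta\rho}\Vert_{\pi_0}$. As in Remark~\ref{rem:properties:lipschitz_constant_det}, $c_{\mathrm{sto}}$ necessarily degenerates when $\partial\Lambda_d$ is enlarged since $\Vert P_\mu\zeta_{\vartheta\rho}\Vert_{\pi_0}\to\infty$ as $\vert\mu\vert\to\infty$, reflecting the intrinsic effect of the unboundedness of $a$.
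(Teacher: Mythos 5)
Your proof is correct and reaches the same structural constant as the paper's, but it takes a genuinely cleaner route in the first step. The paper mirrors the argument of Theorem~\ref{thm:properties:lipschitz_det}: it starts from the third binomial formula $\bigl(\eta_{\mathrm{sto}}(v_N)+\eta_{\mathrm{sto}}(w_N)\bigr)\vert\eta_{\mathrm{sto}}(v_N)-\eta_{\mathrm{sto}}(w_N)\vert = \vert\eta_{\mathrm{sto}}(v_N)^2-\eta_{\mathrm{sto}}(w_N)^2\vert$, introduces the auxiliary sums $s_\nu := \Vert\sum_\mu r_\mu(v_N)z_{\mu\nu}\Vert_D + \Vert\sum_\mu r_\mu(w_N)z_{\mu\nu}\Vert_D$, applies the reverse triangle inequality component-wise in $\nu$, and only at the end cancels the factor $\eta_{\mathrm{sto}}(v_N)+\eta_{\mathrm{sto}}(w_N)$ from both sides. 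You instead observe that $\eta_{\mathrm{sto}}(\cdot,\partial\Lambda_d)$ is literally a norm of a linear map of its argument, so the reverse triangle inequality can be invoked once at the outermost level, eliminating the $s_\nu$ bookkeeping and the $(a+b)(a-b)$ manipulation entirely. For $\eta_{\mathrm{det}}$ this shortcut is unavailable because of the inhomogeneous source term $f$ in the volume residual, which is presumably why the paper kept the two proofs structurally parallel; your observation that no such obstruction exists for $\eta_{\mathrm{sto}}$ is correct and gives a tighter constant (you avoid the harmless but avoidable factor of $2$ that the paper carries into $2c(\partial\Lambda_d)$). All subsequent steps — the diagonalization via doubly orthogonal polynomials, the expansion of $r_\mu$, Cauchy-Schwarz first in $\alpha$ (extracting $\Vert\grad e_N\Vert_{\pi_{\vartheta\rho},D}$ by orthonormality of $\{P_\alpha\}$) and then in $\nu$ (collapsing the cross terms to $\Vert P_\mu\zeta_{\vartheta\rho}\Vert_{\pi_0}\Vert P_{\mu'}\zeta_{\vartheta\rho}\Vert_{\pi_0}$) — match the paper's mechanics, and your constant coincides with~\eqref{eq:properties:lipschitz_sto:proof:constant} up to the harmless relabeling of the coefficient index $\hat\alpha\in\Lambda_{\hat d}$ versus the paper's $\alpha\in\Lambda_{d+\hat d-1}$ (equivalent since $a_\alpha=0$ off $\Lambda_{\hat d}$).
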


\begin{proof}%
  Similar to the proof of Theorem~\ref{thm:properties:lipschitz_det}, we establish some intermediate results before proving the claim.
  First we note that Remark~\ref{rem:properties:doubly_orthogonal_polynomials} and the definition of $\eta_{\mathrm{sto}}$~\eqref{eq:estimator:eta_sto} imply
  \begin{equation}%
    \label{eq:properties:lipschitz_sto:proof:a}
    \eta_{\mathrm{sto}}(v_N, \partial\Lambda_d)^2 = \sum_{\nu\in\partial\Lambda_d} c_\nu^2 \, \Vert \sum_{\mu\in\partial\Lambda_d} r_\mu(v_N)z_{\mu\nu}\Vert_D^2
    \qquad\mbox{for all }v_N\in \mathcal{V}_N.
  \end{equation}
  Secondly, with $s_\nu := \Vert \sum_{\mu\in\partial\Lambda_d} r_\mu(v_N)z_{\mu\nu}\Vert_D + \Vert \sum_{\mu\in\partial\Lambda_d} r_\mu(w_N)z_{\mu\nu}\Vert_D$ it holds
  \begin{equation}%
    \label{eq:properties:lipschitz_sto:proof:b}
    \begin{split}
      \sum_{\nu\in\partial\Lambda_d} c_\nu^2 z_{\mu\nu} s_\nu
      &\leq \Bigl( \sum_{\nu\in\partial\Lambda_d} c_\nu^2 z_{\mu\nu}^2 \Bigr)^{1/2} \Bigl( \sum_{\nu\in\partial\Lambda_d} c_\nu^2 s_\nu^2 \Bigr)^{1/2}\\
      &\leq 2 \Vert P_\mu \zeta_{\vartheta\rho} \Vert_{\pi_0} \Bigl( \eta_{\mathrm{sto}}(v_N, \partial\Lambda_d) + \eta_{\mathrm{sto}}(w_N, \partial\Lambda_d) \Bigr).
    \end{split}
  \end{equation}
  We can now prove the claim by using~\eqref{eq:properties:lipschitz_sto:proof:a}--~\eqref{eq:properties:lipschitz_sto:proof:b}, the third binomial formula as well as the Cauchy-Schwarz, the triangle and the inverse triangle inequalities
  \begin{align*}%
    &\Bigl( \eta_{\mathrm{sto}}(v_N, \partial\Lambda_d) + \eta_{\mathrm{sto}}(w_N, \partial\Lambda_d) \Bigr) \vert \eta_{\mathrm{sto}}(v_N, \partial\Lambda_d) - \eta_{\mathrm{sto}}(w_N, \partial\Lambda_d) \vert\\
    &\qquad = \Bigl\vert \sum_{\nu\in\partial\Lambda_d} c_\nu^2 \, \Bigl( \Vert \sum_{\mu\in\partial\Lambda_d} r_\mu(v_N)z_{\mu\nu} \Vert_D^2 - \Vert \sum_{\mu\in\partial\Lambda_d} r_\mu(w_N)z_{\mu\nu} \Vert_D^2\Bigr)  \Bigr\vert\\
    &\qquad \leq \sum_{\nu\in\partial\Lambda_d} c_\nu^2s_\nu \, \Bigl\vert  \Vert \sum_{\mu\in\partial\Lambda_d} r_\mu(v_N)z_{\mu\nu} \Vert_D - \Vert \sum_{\mu\in\partial\Lambda_d} r_\mu(w_N)z_{\mu\nu} \Vert_D \Bigr\vert\\
    &\qquad \leq \sum_{\beta\in\Lambda_d} \Vert \grad(v_N-w_N)_\beta \Vert_D \Bigl( \sum_{\alpha\in\Lambda_{d+\hat{d}-1}} \sum_{\mu\in\partial\Lambda_d} \Vert a_\alpha \Vert_{L^\infty(D)} \vert \tau_{\alpha\beta\mu}\vert \sum_{\nu\in\partial\Lambda_d} c_\nu^2 z_{\mu\nu} s_\nu \Bigr)\\
    &\qquad \leq 2 c(\partial\Lambda_d) \Bigl( \eta_{\mathrm{sto}}(v_N,\partial\Lambda_d) + \eta_{\mathrm{sto}}(w_N,\partial\Lambda_d) \Bigr) \Vert \grad(v_N-w_N) \Vert_{\pi_{\vartheta\rho}, D},
  \end{align*}
  for $a_\alpha = \sum_{k=1}^{J} \discretized{a}_N[k,\alpha]\varphi_k\in\mathcal{X}_p(\mathcal{T})$ and
  \begin{equation}%
    \label{eq:properties:lipschitz_sto:proof:constant}
    c(\partial\Lambda_d)^2
    = \sum_{\beta\in\Lambda_d} \Bigl( \sum_{\mu\in\partial\Lambda_d} \sum_{\alpha\in\Lambda_{d+\hat{d}-1}} \Vert a_\alpha \Vert_{L^\infty(D)} \vert \tau_{\alpha\beta\mu} \vert \Vert P_\mu\zeta_{\vartheta\rho}\Vert_{\pi_0} \Bigr)^2.
  \end{equation}
  
\end{proof}

\begin{remark}%
  The constants~\eqref{eq:properties:lipschitz_det:proof:constant} and~\eqref{eq:properties:lipschitz_sto:proof:constant} are the same except for the sum over the index $\mu$.
  Since $\vert\partial\Lambda_d\vert<\infty$ due to $\vert\Lambda_{\hat d}\vert < \infty$, the constant in Theorem~\ref{thm:properties:lipschitz_sto} is qualitatively the same as in Theorem~\ref{thm:properties:lipschitz_det}.
  Note that the restriction of $a\in\mathcal{V}(\Lambda_{\hat d}; \mathcal{T}, p)$ for some finite $\Lambda_{\hat d}\subset \mathbb{N}_0^{\hat M}$ is essential for the Lipschitz continuity of $\eta_{\mathrm{sto}}$, as $\partial\Lambda_d$ and thus $c(\partial\Lambda_d)$ would be unbounded otherwise.
\end{remark}

Similar to Corrolary~\ref{cor:properties:lipschitz_det_simple}, a uniformly bounded and positive diffusion coefficient $a$ leads to a simplified Lipschitz constant.

\begin{corollary}%
  \label{cor:properties:lipschitz_sto_simple}
  If there exist constants $\check{c},\hat{c}>0$ such that $\check{c} \leq a(x,y) \leq \hat{c}$ uniformly for all $x\in D$ and almost all $y\in\Gamma$, then
  \begin{equation*}
    \vert \eta_{\mathrm{sto}}(v_N,\partial\Lambda_d) - \eta_{\mathrm{sto}}(w_N,\partial\Lambda_d)\vert
    \leq \hat{c}\, \Vert \grad(v_N-w_N)\Vert_{\pi_0,D}
    \qquad\mbox{for all }v_N,w_N\in\mathcal{V}_N.
  \end{equation*}
\end{corollary}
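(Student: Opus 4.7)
The plan is to mirror the strategy used for Corollary~\ref{cor:properties:lipschitz_det_simple}: with $a$ uniformly bounded above and below we may choose $\vartheta = 0$ in the functional setting, so that $\zeta_{\vartheta\rho}\equiv 1$, $\pi_{\vartheta\rho}=\pi_0$, and the tensorized Hermite basis $\{P_\mu\}$ is already orthonormal in $L^2(\Gamma,\pi_0)$. In particular, the machinery of doubly orthogonal polynomials from Remark~\ref{rem:properties:doubly_orthogonal_polynomials} is unnecessary, and no inverse estimate is needed because $\eta_{\mathrm{sto}}$ involves $r_\mu$ directly rather than its divergence or jumps.

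First I would rewrite the estimator as an $L^2(\Gamma,\pi_0;L^2(D))$-norm of the truncated residual by setting $R_v := \sum_{\mu\in\partial\Lambda_d} r_\mu(v_N)P_\mu$, so that $\eta_{\mathrm{sto}}(v_N,\partial\Lambda_d) = \Vert R_v\Vert_{\pi_0,D}$. The reverse triangle inequality then yields
\[
    \bigl\vert \eta_{\mathrm{sto}}(v_N,\partial\Lambda_d) - \eta_{\mathrm{sto}}(w_N,\partial\Lambda_d) \bigr\vert
    \leq \Vert R_v - R_w \Vert_{\pi_0, D}
    = \Bigl\Vert \sum_{\mu\in\partial\Lambda_d} r_\mu(v_N-w_N)\,P_\mu \Bigr\Vert_{\pi_0, D}.
\]
Second, I would use the identity $a_N\grad(v_N-w_N) = \sum_{\mu\in\Lambda_{d+\hat d-1}} r_\mu(v_N-w_N)\,P_\mu$, which follows directly from the definition~\eqref{eq:estimator:r}. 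Since $\{P_\mu\}$ is orthonormal in $L^2(\Gamma,\pi_0)$, Bessel's (or Parseval's) inequality gives
\[
    \Bigl\Vert \sum_{\mu\in\partial\Lambda_d} r_\mu(v_N-w_N)\,P_\mu \Bigr\Vert_{\pi_0, D}^2
    \leq \Vert a_N\grad(v_N-w_N) \Vert_{\pi_0, D}^2.
\]
Finally, the uniform upper bound $a_N\leq \hat c$ (inherited from the assumption on $a$, cf.\ Remark~\ref{rem:estimator:use_a_N}) delivers $\Vert a_N\grad(v_N-w_N)\Vert_{\pi_0,D} \leq \hat c\,\Vert \grad(v_N-w_N)\Vert_{\pi_0,D}$, which chains into the claim.

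I do not expect any real obstacles here; the main conceptual point is that restricting to $\vartheta=0$ simultaneously collapses the weighted basis to a standard orthonormal one and makes $\partial\Lambda_d$ behave like a projection onto a subspace, so Bessel's inequality replaces the more delicate Cauchy--Schwarz argument over $\nu\in\partial\Lambda_d$ that appeared in Theorem~\ref{thm:properties:lipschitz_sto}. The only bookkeeping subtlety is to verify explicitly that the truncated sum in the estimator is indeed the partial sum of the full $L^2(\Gamma,\pi_0)$-expansion of $a_N\grad(v_N-w_N)$, which is immediate from~\eqref{eq:estimator:r} once the orthonormality is in place.
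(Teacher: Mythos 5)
Your proof is correct, and it takes a slightly different (and in fact more streamlined) route than the paper. The paper mirrors the template of Theorem~\ref{thm:properties:lipschitz_sto}: it factors the difference of squares $\vert\eta_{\mathrm{sto}}(v_N,\partial\Lambda_d)^2-\eta_{\mathrm{sto}}(w_N,\partial\Lambda_d)^2\vert$, exploits that the cross term $\sum_{\mu\in\Lambda_d}r_\mu(v_N-w_N)P_\mu$ is $L^2(\Gamma,\pi_0)$-orthogonal to the boundary-supported test factor $\sum_{\mu\in\partial\Lambda_d}r_\mu(v_N+w_N)P_\mu$ in order to replace the truncated sum by the full field $a\grad(v_N-w_N)$, and then applies Cauchy--Schwarz before dividing by $\eta_{\mathrm{sto}}(v_N,\partial\Lambda_d)+\eta_{\mathrm{sto}}(w_N,\partial\Lambda_d)$. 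You instead start from the reverse triangle inequality and invoke Parseval/Bessel to dominate the partial sum over $\partial\Lambda_d\subset\Lambda_{d+\hat d-1}$ by the norm of the full sum, which equals $\Vert a_N\grad(v_N-w_N)\Vert_{\pi_0,D}$; the uniform bound $a_N\le\hat c$ then finishes the job. Both arguments hinge on the same two facts (linearity of $r_\mu$ and orthonormality of $\{P_\mu\}$ once $\vartheta=0$) and yield the same constant $\hat c$, but your version avoids the difference-of-squares manipulation entirely and is shorter. What the paper's route buys is structural parallelism with the weighted lognormal case, where the orthonormality of $\{P_\mu\zeta_{\vartheta\rho}\}$ fails and a direct reverse-triangle-plus-Bessel argument is no longer available, so the difference-of-squares approach is the one that generalizes. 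Your observation that restricting to $\vartheta=0$ collapses the doubly-orthogonal machinery is exactly the point, and the implicit identification of the bound on $a$ with the bound on $a_N$ (via Remark~\ref{rem:estimator:use_a_N}) is the same one the paper makes silently.
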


\begin{proof}%
  The boundedness of $a$ implies boundedness and ellipticity of the bilinear form~\eqref{eq:setting:B}.
  Hence we can choose $\vartheta=0$, i.e.\ $\zeta_{\vartheta\rho} \equiv 1$ and thus $\pi_{\vartheta\rho}=\pi_0$.
  The third binomial formula, the triangle inequality and the orthonormality of $\{ P_\mu \}$ now imply
  \begin{equation*}%
    \begin{split}
      &\Bigl(\eta_{\mathrm{sto}}(v_N,\partial\Lambda_d) + \eta_{\mathrm{sto}}(w_N,\partial\Lambda_d)\Bigr) \vert \eta_{\mathrm{sto}}(v_N,\partial\Lambda_d) - \eta_{\mathrm{sto}}(w_N,\partial\Lambda_d) \vert\\
      &\qquad=\vert \eta_{\mathrm{sto}}(v_N,\partial\Lambda_d)^2 - \eta_{\mathrm{sto}}(w_N,\partial\Lambda_d)^2 \vert\\
      &\qquad= \Bigl\vert \int_{\Gamma} \int_{D} \Bigl( \sum_{\mu\in\partial\Lambda_d} r_\mu(v_N-w_N)P_\mu \Bigr) \Bigl( \sum_{\mu\in\partial\Lambda_d} r_\mu(v_N+w_N)P_\mu \Bigr)\,\mathrm{d}x \,\mathrm{d}\pi_{0}(y) \Bigr\vert \\
      &\qquad= \Bigl\vert \int_{\Gamma} \int_{D} \Bigl( \sum_{\mu\in\Lambda_d\cup\partial\Lambda_d} r_\mu(v_N-w_N)P_\mu \Bigr) \Bigl( \sum_{\mu\in\partial\Lambda_d} r_\mu(v_N+w_N)P_\mu \Bigr)\,\mathrm{d}x \,\mathrm{d}\pi_{0}(y) \Bigr\vert \\
      &\qquad= \Bigl\vert \int_{\Gamma} \int_{D} a\grad(v_N-w_N) \Bigl( \sum_{\mu\in\partial\Lambda_d} r_\mu(v_N+w_N)P_\mu \Bigr)\,\mathrm{d}x \,\mathrm{d}\pi_{0}(y) \Bigr\vert \\
      &\qquad\leq \hat{c}\,\Vert \grad(v_N-w_N) \Vert_{\pi_{0},D} \,\Vert \sum_{\mu\in\partial\Lambda_d} r_\mu(v_N+w_N)P_\mu \Vert_{\pi_{0},D} \\
      &\qquad\leq \hat{c}\,\Vert \grad(v_N-w_N) \Vert_{\pi_{0},D} \Bigl( \eta_{\mathrm{sto}}(v_N,\partial\Lambda_d) + \eta_{\mathrm{sto}}(w_N,\partial\Lambda_d) \Bigr).
    \end{split}
  \end{equation*}
\end{proof}

We note that the Lipschitz continuity of $\eta_{\mathrm{sto}}$ for the affine field $\gamma$ was established in~\cite[Lemma 4.5]{EigelGittelson2014convergence}, which holds with the same Lipschitz constant.
Since $\gamma$ is a special case of a bounded positive diffusion field, Corollary~\ref{cor:properties:lipschitz_sto_simple} can be seen as a generalization of~\cite{EigelGittelson2014convergence}.

As the regularization parameter $\vartheta\in(0,1)$ influences the deviation of $\pi_{\vartheta\rho}$ from $\pi_0$, it is possible to show that $\eta_{\mathrm{sto}}$ is almost additive in the second argument if $\vartheta$ is chosen small enough.

\begin{theorem}[quasi additivity of $\eta_{\mathrm{sto}}$ in the second component]%
  \label{thm:properties:additivity_sto}
  For any $\varepsilon>0$, there exists $\vartheta_\varepsilon\in(0,1)$ such that for any $\vartheta \leq \vartheta_\varepsilon$ and any $\Delta\subseteq\partial\Lambda_d$
  \begin{equation*}
    \eta_{\mathrm{sto}}(w_N,\partial\Lambda_d\setminus\Delta)^2
    \leq \eta_{\mathrm{sto}}(w_N,\partial\Lambda_d)^2 - \eta_{\mathrm{sto}}(w_N,\Delta)^2 + \varepsilon.
  \end{equation*}
\end{theorem}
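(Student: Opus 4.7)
The plan is to decompose the squared estimator on $\partial\Lambda_d$ into the squared estimators on $\Delta$ and $\partial\Lambda_d\setminus\Delta$ plus a cross term, and then to show that the cross term becomes arbitrarily small as $\vartheta\to 0$. Bilinearity of the $L^2(\pi_0)$ inner product immediately gives
\begin{equation*}
  \eta_{\mathrm{sto}}(w_N,\partial\Lambda_d)^2 = \eta_{\mathrm{sto}}(w_N,\Delta)^2 + \eta_{\mathrm{sto}}(w_N,\partial\Lambda_d\setminus\Delta)^2 + 2C(\vartheta),
\end{equation*}
where
\begin{equation*}
  C(\vartheta) := \int_D \sum_{\mu\in\Delta}\sum_{\nu\in\partial\Lambda_d\setminus\Delta} r_\mu(w_N)\cdot r_\nu(w_N)\,A_{\mu\nu}(\vartheta)\,\mathrm{d}x, \qquad A_{\mu\nu}(\vartheta) := \int_\Gamma P_\mu P_\nu\,\zeta_{\vartheta\rho}^2\,\mathrm{d}\pi_0.
\end{equation*}
The theorem is therefore equivalent to $-2C(\vartheta)\leq\varepsilon$, so it suffices to control $|C(\vartheta)|$ by choosing $\vartheta$ small.

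The crucial observation is that $A_{\mu\nu}(0)=\delta_{\mu\nu}$: at $\vartheta=0$ the weight $\zeta_0\equiv 1$ and the scaled Hermite polynomials $P_\mu$ degenerate to the standard Hermite basis orthonormal in $\pi_0$. Since $\mu\in\Delta$ and $\nu\in\partial\Lambda_d\setminus\Delta$ lie in disjoint sets, every $(\mu,\nu)$ in the cross-sum satisfies $\mu\neq\nu$, so $A_{\mu\nu}(0)=0$ for every contributing pair. I would then verify continuity of $\vartheta\mapsto A_{\mu\nu}(\vartheta)$ at $\vartheta=0$: the weight $\zeta_{\vartheta\rho}$ has the explicit form~\eqref{eq:setting:zeta} and $P_\mu^m$ depends smoothly on the variance $\sigma_m(\vartheta\rho)^2$, so dominated convergence applies, the integrable dominating function coming from Corollary~\ref{cor:properties:polynom_integration} on a neighbourhood of $\vartheta=0$. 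Finiteness of $\partial\Lambda_d$ then ensures that $\delta(\vartheta) := \max_{\mu\neq\nu\in\partial\Lambda_d}|A_{\mu\nu}(\vartheta)|$ is well-defined and tends to $0$ as $\vartheta\to 0$.

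Plugging this into the Cauchy--Schwarz bound
\begin{equation*}
  |C(\vartheta)| \leq \delta(\vartheta)\,|\partial\Lambda_d|\sum_{\mu\in\partial\Lambda_d}\Vert r_\mu(w_N)\Vert_D^2
\end{equation*}
reduces the task to choosing $\vartheta_\varepsilon$ small enough that the right-hand side is at most $\varepsilon/2$. The main obstacle is that the right-hand side depends on $w_N$ through $\sum_{\mu}\Vert r_\mu(w_N)\Vert_D^2$, so the naive bound is not uniform in $w_N\in\mathcal{V}_N$; either $\vartheta_\varepsilon$ is allowed to absorb a bound on the discrete coefficient norm of $w_N$ (which is acceptable in the algorithmic context where $w_N=u_\ell$ is a concrete Galerkin iterate used in Lemma~\ref{lem:convergence:main_lemma}), or the sum is re-expressed in the doubly orthogonal basis of Remark~\ref{rem:properties:doubly_orthogonal_polynomials}, turning the cross term into $2\sum_\lambda c_\lambda^2\langle \hat R_\lambda^\Delta,\hat R_\lambda^{\partial\Lambda_d\setminus\Delta}\rangle_D$ and yielding a bound that is multiplicative in $\eta_{\mathrm{sto}}(w_N,\partial\Lambda_d)^2$ rather than additive. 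The doubly orthogonal route gives the cleaner argument and is what I would pursue in a final write-up.
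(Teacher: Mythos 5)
Your decomposition into $\eta_{\mathrm{sto}}(w_N,\Delta)^2 + \eta_{\mathrm{sto}}(w_N,\partial\Lambda_d\setminus\Delta)^2 + 2C(\vartheta)$ is exactly where the paper starts, but the mechanism for controlling the cross term differs. The paper does not pass to $\vartheta=0$: it exploits that at the given $\vartheta>0$ the two pieces $g_\Delta=\sum_{\mu\in\Delta}r_\mu P_\mu$ and $g_{\partial\Lambda_d\setminus\Delta}$ are already orthogonal in $L^2(\Gamma,\pi_{\vartheta\rho};L^2(D))$ (because $\{P_\mu\}$ is $\pi_{\vartheta\rho}$-orthonormal), so the cross term equals $2\langle g_\Delta\,g_{\partial\Lambda_d\setminus\Delta},\,\zeta_{\vartheta\rho}^2-\zeta_{\vartheta\rho}\rangle_{\pi_0,D}$, then applies Cauchy--Schwarz to obtain $2\Vert g_\Delta\,g_{\partial\Lambda_d\setminus\Delta}\Vert_{\pi_0,D}\,\Vert\zeta_{\vartheta\rho}^2-\zeta_{\vartheta\rho}\Vert_{\pi_0}$ and drives $\Vert\zeta_{\vartheta\rho}^2-\zeta_{\vartheta\rho}\Vert_{\pi_0}\to0$ by the explicit normalization computation $c_\alpha(\vartheta\rho)\to1$ from Lemma~\ref{lem:properties:weighted_gaussian_measure}. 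Your entrywise route is sound but is really the same estimate in disguise: since $\int_\Gamma P_\mu P_\nu\zeta_{\vartheta\rho}\dd\pi_0=\delta_{\mu\nu}$, every off-diagonal entry satisfies $A_{\mu\nu}(\vartheta)=\int_\Gamma P_\mu P_\nu(\zeta_{\vartheta\rho}^2-\zeta_{\vartheta\rho})\dd\pi_0$, hence $\vert A_{\mu\nu}(\vartheta)\vert\leq\Vert P_\mu P_\nu\Vert_{L^2(\Gamma,\pi_0)}\,\Vert\zeta_{\vartheta\rho}^2-\zeta_{\vartheta\rho}\Vert_{\pi_0}$; using this identity replaces your dominated-convergence step, which is delicate as written because both $P_\mu$ and the coefficients $r_\mu$ change with $\vartheta$. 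Your concern about non-uniformity in $w_N$ applies equally to the paper's proof (its $\vartheta_\varepsilon$ involves $\Vert g_\Delta g_{\partial\Lambda_d\setminus\Delta}\Vert_{\pi_0,D}^{-1}$) and is absorbed level by level in Theorem~\ref{thm:convergence:convergence} through the choice $\varepsilon_\vartheta\leq\varepsilon_\vartheta^*\eta_{\mathrm{sto},\ell}^{-2}$. Finally, the doubly orthogonal variant, once written correctly, gives the cross term as $\int_D\sum_\lambda(c_\lambda^2-1)\hat R_\lambda^\Delta\hat R_\lambda^{\partial\Lambda_d\setminus\Delta}\dd x$ rather than $\sum_\lambda c_\lambda^2\langle\cdot,\cdot\rangle_D$ (the $\pi_{\vartheta\rho}$-orthogonality subtracts the identity block), so it again reduces to $c_\lambda^2\to1$ as $\vartheta\to0$ and is not a genuinely cleaner alternative, merely a change of basis of the same estimate.
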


\begin{proof}%
  Let $g_\Delta(w_N)$ and $g_{\partial\Lambda_d\setminus\Delta}(w_N)$ be given respectively by
  \begin{equation*}
    g_\Delta(w_N) 
    = \sum_{\mu\in\Delta} r_\mu(w_N)P_\mu
    \qquad\mbox{and}\qquad
    g_{\partial\Lambda_d\setminus\Delta}(w_N)
    = \sum_{\mu\in\partial\Lambda_d\setminus\Delta} r_\mu(w_N)P_\mu.
  \end{equation*}
  Since $g_\Delta(w_N) \perp_{\pi_{\vartheta\rho},D} g_{\partial\Lambda_d\setminus\Delta}(w_N)$, the binomial formula and the Cauchy-Schwarz inequality yield
  \begin{equation*}%
    \begin{split}
      \eta_{\mathrm{sto}}(w_N,\partial\Lambda)^2
      &= \eta_{\mathrm{sto}}(w_N,\partial\Lambda_d\setminus\Delta)^2 + \eta_{\mathrm{sto}}(w_N,\Delta)^2 + 2\, \langle g_\Delta(w_N)g_{\partial\Lambda_d\setminus\Delta}(w_N),\; \zeta_{\vartheta\rho}^2-\zeta_{\vartheta\rho}\rangle_{\pi_0, D}\\
      &\geq \eta_{\mathrm{sto}}(w_N,\partial\Lambda_d\setminus\Delta)^2 + \eta_{\mathrm{sto}}(w_N,\Delta)^2 - 2\, \Vert g_\Delta(w_N)g_{\partial\Lambda_d\setminus\Delta}(w_N)\Vert_{\pi_0,D} \Vert \zeta_{\vartheta\rho}^2-\zeta_{\vartheta\rho}\Vert_{\pi_0}.
    \end{split}
  \end{equation*}
  By Lemma~\ref{lem:properties:weighted_gaussian_measure}, $\zeta_{\vartheta\rho}^{\alpha}\pi_0$ is proportional to a Gaussian probability density for any $\alpha>0$ as long as $\vartheta\rho \leq \rho_\alpha$.
  In particular, the normalization constant reads
  \begin{equation*}
    c_\alpha(\vartheta\rho) = \prod_{m=1}^{\hat M} c_{\alpha,m}(\vartheta\rho)
    \qquad\mbox{with}\qquad
    c_{\alpha,m}(\vartheta\rho) = \sigma_m(\vartheta\rho)^{\alpha-1}\sqrt{\alpha + (1-\alpha)\sigma_m(\vartheta\rho)^2}.
  \end{equation*}
  Since $\sigma_m(\vartheta\rho)^\alpha = \exp(\alpha\vartheta\rho\Vert\gamma_m\Vert_{L^\infty(D)}) \to 1$ as $\vartheta\to0$ for any $\alpha\in\mathbb{R}$, we get $c_\alpha(\vartheta\rho)\to1$.
  This implies
  \begin{equation*}
    0
    \leq \Vert \zeta_{\vartheta\rho}^2-\zeta_{\vartheta\rho}\Vert_{\pi_0}^2
    = \int_{\Gamma} \zeta_{\vartheta\rho}^4 \,\mathrm{d}\pi_0 + \int_{\Gamma} \zeta_{\vartheta\rho}^2 \,\mathrm{d}\pi_0 - 2\int_{\Gamma} \zeta_{\vartheta\rho}^3 \,\mathrm{d}\pi_0
    \ \xrightarrow{\vartheta\to0}\ 0.
  \end{equation*}
  Since $\Vert g_\Delta(w_N)g_{\partial\Lambda_d\setminus\Delta}(w_N)\Vert_{\pi_0,D}$ is independent of $\vartheta$, Lemma~\ref{lem:properties:weighted_gaussian_measure} yields that there exists $0 < \vartheta_\varepsilon \leq \min\{\rho_{\alpha}\ \vert \ \alpha=2,3,4\}$ such that
  \begin{equation*}
    \Vert \zeta_{\vartheta\rho}^2-\zeta_{\vartheta\rho}\Vert_{\pi_0}
    \leq \frac{1}{2}\varepsilon \Vert g_\Delta(w_N)g_{\partial\Lambda_d\setminus\Delta}(w_N)\Vert_{\pi_0,D}^{-1},
  \end{equation*}
  for any $\vartheta < \vartheta_\varepsilon$, which proves the claim.
\end{proof}

\section{Quasi-error reduction by the adaptive algorithm}%
\label{sec:convergence}

With the properties established in the previous section, this section proves the reduction of the quasi-error~\eqref{eq:introduction:convergence} in each iteration of the adaptive Algorithm~\ref{alg:estimator:adaptive} as the main result of this work.
As depicted in Figure~\ref{fig:convergence:schematic}, it is first required to establish an estimate that relates the estimator contributions on one level to similar quantities of the previous level.

\begin{lemma}%
  \label{lem:convergence:main_lemma}
  For any non-empty sets $0\in\Lambda_d\subset\hat\Lambda\subset\mathbb{N}_0^{\hat M}$ and triangulations $\mathcal{T},\hat{\mathcal{T}}$, where $\hat{\mathcal{T}}$ is a one-level refinement of $\mathcal{T}$, let $\mathcal{M}=\mathcal{T}\setminus(\hat{\mathcal{T}}\cap\mathcal{T})$ be the set of triangles marked for refinement and $\Delta=\partial\Lambda_d\cap\hat\Lambda$ the set of added stochastic indices.
  Then it holds for any $w_N\in\mathcal{V}_N=\mathcal{V}_N(\Lambda_d;\mathcal{T},p)$, $\hat w_N\in\hat{\mathcal{V}}_N=\hat{\mathcal{V}}_N(\hat\Lambda;\hat{\mathcal{T}},p)$ and $\varepsilon_{\mathrm{det}}, \varepsilon_{\mathrm{sto}} > 0$, $\tau\geq 0$ that
  \begin{equation*}
    \begin{split}
      &\eta_{\mathrm{det}}(\hat w_N,\hat{\mathcal{T}},\hat\Lambda)^2 + \tau \eta_{\mathrm{sto}}(\hat w_N, \partial\hat\Lambda)^2\\
      &\qquad\leq (1+\varepsilon_{\mathrm{det}})\Bigl(\eta_{\mathrm{det}}(w_N,\mathcal{T},\Lambda_d)^2 - \lambda\, \eta_{\mathrm{det}}(w_N,\mathcal{M},\Lambda_d)^2\Bigr)\\
      &\qquad\quad+ (1+\varepsilon_{\mathrm{sto}})\tau \, \eta_{\mathrm{sto}}(w_N,\partial\Lambda_d\setminus\Delta)^2 + 3(1+\varepsilon_{\mathrm{det}})c_{\mathrm{inv}}^2 \, \eta_{\mathrm{sto}}(w_N,\Delta)^2\\
      &\qquad\quad+ \Bigl((1+\varepsilon_{\mathrm{det}}^{-1})c_{\mathrm{det}}^2 + (1+\varepsilon_{\mathrm{sto}}^{-1})c_{\mathrm{sto}}^2\tau\Bigr) \, \Vert \grad(w_N-\hat w_N))\Vert_{\pi_{\rho},D}^2,
    \end{split}
  \end{equation*}
  with $\lambda = 1 - 2^{-1/2}$ and $c_{\mathrm{sto}}$, $c_{\mathrm{det}}$ from Theorem~\ref{thm:properties:lipschitz_det} and Theorem~\ref{thm:properties:lipschitz_sto}, respectively.
\end{lemma}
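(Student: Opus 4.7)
The plan is to transition from the fine-level quantities on $(\hat{\mathcal T}, \hat\Lambda, \hat w_N)$ to the coarse-level ones on $(\mathcal T, \Lambda_d, w_N)$ in three stages: (i) from $\hat w_N$ to $w_N$ via the Lipschitz continuity of the two estimator parts; (ii) from the enlarged index set $\hat\Lambda = \Lambda_d \cup \Delta$ back to $\Lambda_d$, trading the $\Delta$-contribution against the stochastic estimator; and (iii) from the refined mesh $\hat{\mathcal T}$ back to $\mathcal T$ via the classical newest-vertex-bisection estimator reduction on the marked elements $\mathcal M$, producing the contraction factor $\lambda = 1 - 2^{-1/2}$.

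For the first stage, I would apply Theorem~\ref{thm:properties:lipschitz_det} elementwise on every $\hat T \in \hat{\mathcal T}$, using the natural embedding $w_N \in \mathcal V_N(\Lambda_d;\mathcal T,p) \hookrightarrow \mathcal V_N(\hat\Lambda;\hat{\mathcal T},p)$, and combine the squared local bound with the Young inequality $(a+b)^2 \le (1+\varepsilon_{\mathrm{det}})a^2 + (1+\varepsilon_{\mathrm{det}}^{-1})b^2$, then sum over $\hat T$. This yields
\[
  \eta_{\mathrm{det}}(\hat w_N,\hat{\mathcal T},\hat\Lambda)^2 \le (1+\varepsilon_{\mathrm{det}})\,\eta_{\mathrm{det}}(w_N,\hat{\mathcal T},\hat\Lambda)^2 + (1+\varepsilon_{\mathrm{det}}^{-1})\,c_{\mathrm{det}}^2\,\|\grad(\hat w_N-w_N)\|_{\pi_\rho,D}^2,
\]
and Theorem~\ref{thm:properties:lipschitz_sto} gives the analogous bound for $\tau\,\eta_{\mathrm{sto}}(\hat w_N,\partial\hat\Lambda)^2$ with constant $c_{\mathrm{sto}}^2$ and parameter $\varepsilon_{\mathrm{sto}}$. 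Since $r_\mu(w_N) = 0$ whenever $\mu \notin \Lambda_{d+\hat d-1}$ by~\eqref{eq:estimator:r} and the indices of $\partial\hat\Lambda$ meeting $\Lambda_{d+\hat d-1}$ are precisely $\partial\Lambda_d \setminus \Delta$, the identity $\eta_{\mathrm{sto}}(w_N,\partial\hat\Lambda)^2 = \eta_{\mathrm{sto}}(w_N,\partial\Lambda_d \setminus \Delta)^2$ holds and matches the $\eta_{\mathrm{sto}}$ term in the statement.

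For the second stage, I would split $\sum_{\mu\in\hat\Lambda} = \sum_{\mu\in\Lambda_d} + \sum_{\mu\in\Delta}$ inside $\eta_{\mathrm{det}}(w_N,\hat{\mathcal T},\hat\Lambda)^2$. Minkowski on each $\hat T$ gives $\eta_{\mathrm{det}}(w_N,\hat\Lambda)|_{\hat T} \le \eta_{\mathrm{det}}(w_N,\Lambda_d)|_{\hat T} + \eta_{\mathrm{det}}(w_N,\Delta)|_{\hat T}$; since $0 \in \Lambda_d$, the $\Delta$-part carries no source term, so applying the inverse estimates~\eqref{eq:setting:inverse_estimates} pointwise to the divergence and jump of $r_\mu(w_N)$ reproduces the argument of Theorem~\ref{thm:properties:continuity_det} and yields $\eta_{\mathrm{det}}(w_N,\hat{\mathcal T},\Delta)^2 \le 2c_{\mathrm{inv}}^2\,\eta_{\mathrm{sto}}(w_N,\Delta)^2$. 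Expanding the squared Minkowski bound and absorbing the cross term into the $\Delta$-contribution via $2ab \le a^2 + b^2$ together with this inverse-estimate bound gives
\[
  \eta_{\mathrm{det}}(w_N,\hat{\mathcal T},\hat\Lambda)^2 \le \eta_{\mathrm{det}}(w_N,\hat{\mathcal T},\Lambda_d)^2 + 3c_{\mathrm{inv}}^2\,\eta_{\mathrm{sto}}(w_N,\Delta)^2,
\]
the factor $3$ accounting for the direct $\Delta$-bound plus the absorbed cross term. The third stage applies the classical NVB reduction on $\Lambda_d$: the refined children of any $T \in \mathcal M$ satisfy $h_{\hat T}^2 \le h_T^2/2$ (contraction $1/2$ on the volume term), newly introduced interior edges inside $T$ carry vanishing jumps $\llbracket r_\mu(w_N)\rrbracket = 0$ (as $r_\mu(w_N)$ is polynomial on $T$), and subdivided boundary edges halve (contraction $1/\sqrt{2}$ on the jump term); these combine to $\eta_{\mathrm{det}}(w_N,\hat{\mathcal T},\Lambda_d)^2 \le \eta_{\mathrm{det}}(w_N,\mathcal T,\Lambda_d)^2 - \lambda\,\eta_{\mathrm{det}}(w_N,\mathcal M,\Lambda_d)^2$. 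Chaining the three stages proves the lemma.

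The main obstacle is the index-set reduction of the second stage with the clean $(1+\varepsilon_{\mathrm{det}})$-factor on the $\eta_{\mathrm{det}}(w_N,\mathcal T,\Lambda_d)^2$ term. The weighted measure $\zeta_{\vartheta\rho}^2\pi_0$ destroys the $L^2$-orthogonality between $\Lambda_d$- and $\Delta$-supported residual modes, so no direct additivity is available, and a naive Young inequality on the squared Minkowski bound would reintroduce a parameter on \emph{both} sides. The structural feature that rescues the argument is that $0 \in \Lambda_d$: the $\Delta$-contribution is a pure divergence/jump of $r_\mu(w_N)$ and can be controlled by $\eta_{\mathrm{sto}}(w_N,\Delta)$ via the inverse estimates~\eqref{eq:setting:inverse_estimates}. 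Absorbing the cross term entirely into this bound—rather than distributing it symmetrically—is what keeps the coefficient on $\eta_{\mathrm{det}}(w_N,\mathcal T,\Lambda_d)^2$ free of extra parameters, at the price of the factor $3$ multiplying $c_{\mathrm{inv}}^2\,\eta_{\mathrm{sto}}(w_N,\Delta)^2$.
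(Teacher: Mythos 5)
Your outline tracks the paper's proof almost step for step: Lipschitz continuity of $\eta_{\mathrm{det}}$ on each $\hat T$ plus Young's inequality, Lipschitz continuity of $\eta_{\mathrm{sto}}$ plus Young's inequality, the identification $\eta_{\mathrm{sto}}(w_N,\partial\hat\Lambda)=\eta_{\mathrm{sto}}(w_N,\partial\Lambda_d\setminus\Delta)$, the newest-vertex-bisection estimator reduction on $\mathcal M$ giving $\lambda=1-2^{-1/2}$, and the final $\pi_{\vartheta\rho}\to\pi_\rho$ embedding via Lemma~\ref{lem:properties:weighted_gaussian_measure}. That part is fine.

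The genuine gap is in your second stage, and your own discussion already signals the problem but then asserts a resolution that does not hold. From Minkowski you have, on each $\hat T$, $\eta_{\mathrm{det}}(\hat\Lambda)|_{\hat T}\le a+b$ with $a=\eta_{\mathrm{det}}(\Lambda_d)|_{\hat T}$ and $b=\eta_{\mathrm{det}}(\Delta)|_{\hat T}$. Squaring gives $a^2+2ab+b^2$, and the only parameter-free way to kill the cross term is $2ab\le a^2+b^2$, which yields $2a^2+2b^2$, hence after summing and using the inverse-estimate bound $\eta_{\mathrm{det}}(\hat{\mathcal T},\Delta)^2\le 2c_{\mathrm{inv}}^2\eta_{\mathrm{sto}}(\Delta)^2$ you get
\begin{equation*}
\eta_{\mathrm{det}}(w_N,\hat{\mathcal T},\hat\Lambda)^2 \le 2\,\eta_{\mathrm{det}}(w_N,\hat{\mathcal T},\Lambda_d)^2 + 4c_{\mathrm{inv}}^2\,\eta_{\mathrm{sto}}(w_N,\Delta)^2,
\end{equation*}
not $\eta_{\mathrm{det}}(\hat{\mathcal T},\Lambda_d)^2 + 3c_{\mathrm{inv}}^2\eta_{\mathrm{sto}}(\Delta)^2$ as you claim. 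There is no way to ``absorb the cross term entirely into the $\Delta$-contribution'': the inequality $2ab\le c\,b^2$ requires $a\lesssim b$, i.e.\ it would require $\eta_{\mathrm{det}}(\Lambda_d)|_{\hat T}$ to be controlled by $\eta_{\mathrm{det}}(\Delta)|_{\hat T}$, which is false in general (take $w_N$ with zero residual on $\Delta$). Your extra factor $2$ in front of $\eta_{\mathrm{det}}(\hat{\mathcal T},\Lambda_d)^2$ then propagates through the NVB reduction and contradicts the claimed $(1+\varepsilon_{\mathrm{det}})$ coefficient on $\eta_{\mathrm{det}}(w_N,\mathcal T,\Lambda_d)^2$ in the lemma.

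To be fair, the paper's own account of this step is equally terse: it writes $\eta_{\mathrm{det}}(\hat{\mathcal T},\hat\Lambda)^2\le\eta_{\mathrm{det}}(\hat{\mathcal T},\Lambda_d)^2+\eta_{\mathrm{det}}(\hat{\mathcal T},\hat\Lambda\setminus\Lambda_d)^2$ as if it were an identity, which — as you correctly observe — is not justified under the $\zeta_{\vartheta\rho}$-weighted norm, and then the stated constant $3c_{\mathrm{inv}}^2$ does not match the $\tilde c_{\mathrm{det}}^2=2c_{\mathrm{inv}}^2$ from Theorem~\ref{thm:properties:continuity_det}. A clean way to repair both would be to apply Young with a free parameter, $(a+b)^2\le(1+\varepsilon')a^2+(1+\varepsilon'^{-1})b^2$, and fold $\varepsilon'$ into $\varepsilon_{\mathrm{det}}$; the price is a modified constant in front of $\eta_{\mathrm{sto}}(\Delta)^2$ that depends on $\varepsilon_{\mathrm{det}}^{-1}$, not the fixed $3c_{\mathrm{inv}}^2$. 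The point is that your stated justification for the factor $3$ is not a valid algebraic step, and you should flag this rather than present it as resolved.
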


\begin{proof}%
  By Theorem~\ref{thm:properties:lipschitz_det} we have
  \begin{equation*}
    \begin{split}
      \eta_{\mathrm{det}}(\hat w_N,\hat{\mathcal{T}},\hat\Lambda)^2
      &\leq \sum_{\hat T\in\hat{\mathcal{T}}} \Bigl(\eta_{\mathrm{det},\hat T}(w_N,\hat\Lambda) + \vert \eta_{\mathrm{det},\hat T}(\hat w_N,\hat\Lambda) - \eta_{\mathrm{det},\hat T}(w_N,\hat\Lambda)\vert \Bigr)^2\\
      &\leq \sum_{\hat T\in\hat{\mathcal{T}}} \Bigl(\eta_{\mathrm{det},\hat T}(w_N,\hat\Lambda) + c_{\mathrm{det}}\Vert\grad(w_N-\hat w_N)\Vert_{\pi_{\vartheta\rho},\hat T} \Bigr)^2.
    \end{split}
  \end{equation*}
  Using Young's inequality for the mixed terms of the last estimate yields for any $\varepsilon_{\mathrm{det}}>0$
  \begin{equation*}
    \begin{split}
      &2 c_{\mathrm{det}}\eta_{\mathrm{det},\hat T}(w_N,\hat\Lambda) \Vert\grad(w_N-\hat w_N)\Vert_{\pi_{\vartheta\rho},\hat T}\\
      &\qquad\leq \varepsilon_{\mathrm{det}} \eta_{\mathrm{det},\hat T}(w_N,\hat\Lambda)^2 + \varepsilon_{\mathrm{det}}^{-1} c_{\mathrm{det}}^2 \Vert\grad(w_N-\hat w_N)\Vert_{\pi_{\vartheta\rho},\hat T}^2,
    \end{split}
  \end{equation*}
  which implies
  \begin{equation*}
    \eta_{\mathrm{det}}(\hat w_N,\hat{\mathcal{T}},\hat\Lambda)^2
    \leq (1+\varepsilon_{\mathrm{det}}) \eta_{\mathrm{det}}(w_N,\hat{\mathcal{T}},\hat\Lambda)^2 + (1+\varepsilon_{\mathrm{det}}^{-1}) c_{\mathrm{det}}^2 \Vert\grad(w_N-\hat w_N)\Vert_{\pi_{\vartheta\rho},D}^2.
  \end{equation*}
  Applying Theorem~\ref{thm:properties:continuity_det} then gives
  \begin{equation*}
    \begin{split}
      \eta_{\mathrm{det}}(w_N,\hat{\mathcal{T}},\hat\Lambda)^2
      &\leq \eta_{\mathrm{det}}(w_N,\hat{\mathcal{T}},\Lambda_d)^2 + \eta_{\mathrm{det}}(w_N,\hat{\mathcal{T}},\hat\Lambda\setminus\Lambda_d)^2 \\
      &\leq \eta_{\mathrm{det}}(w_N,\hat{\mathcal{T}},\Lambda_d)^2 + 3 c_{\mathrm{inv}}^2\eta_{\mathrm{sto}}(w_N,\partial\Lambda_d\cap\hat\Lambda)^2.
    \end{split}
  \end{equation*}
  Let $T\in\mathcal{M}\subset\mathcal{T}$ be a triangle marked for refinement and denote by $\hat{\mathcal{T}}(T) = \{ \hat T\in \hat{\mathcal{T}} \colon \hat T\subset T \}$ the set of all children of $T$ in $\hat{\mathcal{T}}$.
  Since $w_N$ is smooth on all edges $\hat E\in\operatorname{int}(T)$ it follows that $\llbracket r_\mu(w_N)\rrbracket_{\hat E}=0$ for all $\mu\in\Lambda_d$.
  Since we assume $D\subset\mathbb{R}^2$ and $\hat{\mathcal{T}}$ to be a one-level refinement of $\mathcal{T}$ obtained via newest-vertex bisection, there holds
  \begin{equation*}
    h_{\hat T}
    = \vert \hat T \vert^{1/2}
    \leq \Big( \frac{1}{2} \vert T\vert \Bigr)^{1/2}
    = 2^{-1/2} h_T
  \end{equation*}
  for any $\hat T\in \hat{\mathcal{T}}(T)$.
  We note that technically $h_{T}\approx\vert T \vert^{1/2}$ with equivalence constants induced by the shape regularity of $\mathcal{T}$, which we will ignore here to keep the notation as concise as possible.
  With $\lambda=1-2^{-1/2}$ we get
  \begin{equation*}
    \begin{split}
      \eta_{\mathrm{det}}(w_N,\hat{\mathcal{T}},\Lambda_d)^2
      &= \eta_{\mathrm{det}}(w_N,\hat{\mathcal{T}}\setminus\hat{\mathcal{T}}(\mathcal{M}),\Lambda)^2 + \eta_{\mathrm{det}}(w_N,\hat{\mathcal{T}}(\mathcal{M}),\Lambda)^2\\
      &\leq \eta_{\mathrm{det}}(w_N,\mathcal{T}\setminus\mathcal{M},\Lambda_d)^2 + 2^{-1/2} \, \eta_{\mathrm{det}}(w_N,\mathcal{M},\Lambda_d)^2\\
      &= \eta_{\mathrm{det}}(w_N,\mathcal{T},\Lambda_d)^2 - \lambda \, \eta_{\mathrm{det}}(w_N,\mathcal{M},\Lambda_d)^2.
    \end{split}
  \end{equation*}
  Combining the above estimates yields
  \begin{equation*}
    \begin{split}
      \eta_{\mathrm{det}}(\hat w_N, \hat{\mathcal{T}}, \hat\Lambda)^2
      &\leq (1+\varepsilon_{\mathrm{det}}) \Bigl( \eta_{\mathrm{det}}(w_N,\mathcal{T},\Lambda_d)^2 - \lambda \, \eta_{\mathrm{det}}(w_N,\mathcal{M},\Lambda_d)^2 \Bigr)\\
      &\qquad + 3(1+\varepsilon_{\mathrm{det}})c_{\mathrm{inv}}^2 \, \eta_{\mathrm{sto}}(w_N,\Delta)^2 + (1+\varepsilon_{\mathrm{det}}^{-1})c_{\mathrm{det}}^2 \, \Vert\grad(w_N-\hat w_N)\Vert_{\pi_{\vartheta\rho},D}^2.
    \end{split}
  \end{equation*}
  Similarly, Theorem~\ref{thm:properties:lipschitz_sto} and Young's inequality for any $\varepsilon_{\mathrm{sto}}>0$ leads to the estimate
  \begin{equation*}
    \begin{split}
      \eta_{\mathrm{sto}}(\hat w_N,\partial\hat\Lambda)^2
      &\leq \Bigl(\eta_{\mathrm{sto}}(w_N,\partial\hat\Lambda) + \vert \eta_{\mathrm{sto}}(\hat w_N,\partial\hat\Lambda) - \eta_{\mathrm{st}}(w_N,\partial\hat\Lambda)\vert \Bigr)^2\\
      &\leq \Bigl(\eta_{\mathrm{sto}}(w_N,\partial\hat\Lambda) + c_{\mathrm{sto}} \, \Vert\grad(w_N-\hat w_N)\Vert_{\pi_{\vartheta\rho},D} \Bigr)^2\\
      &\leq (1+\varepsilon_{\mathrm{sto}}) \, \eta_{\mathrm{sto}}(w_N,\partial\hat\Lambda)^2 + (1+\varepsilon_{\mathrm{sto}}^{-1})c_{\mathrm{sto}}^2 \, \Vert\grad(w_N-\hat w_N)\Vert_{\pi_{\vartheta\rho},D}^2.
    \end{split}
  \end{equation*}
  Note that $\Lambda_d\subset\hat\Lambda$ implies $\partial\Lambda_d\subset\hat\Lambda\cup\partial\hat\Lambda$ and thus $\Delta=\partial\Lambda_d\cap\hat\Lambda=\partial\Lambda_d\setminus\partial\hat\Lambda$.
  Since $r_\mu(w_N)=0$ for $\mu\not\in\Lambda_d\cup\partial\Lambda_d$, we get that $\eta_{\mathrm{sto}}(w_N,\partial\hat\Lambda\setminus\partial\Lambda_d)^2=0$, which yields
  \begin{equation*}
    \eta_{\mathrm{sto}}(\hat{w}_N,\partial\hat\Lambda)^2
    = \eta_{\mathrm{sto}}(\hat{w}_N,\partial\Lambda_d\cap\partial\hat\Lambda)^2
    = \eta_{\mathrm{sto}}(\hat{w}_N,\partial\Lambda_d\setminus(\partial\Lambda_d\cap\hat\Lambda))^2
    = \eta_{\mathrm{sto}}(\hat{w}_N,\partial\Lambda_d\setminus\Delta)^2.
  \end{equation*}
  Combining all the results above and estimating the norm by Lemma~\ref{lem:properties:weighted_gaussian_measure} concludes the proof.
\end{proof}

With Lemma~\ref{lem:convergence:main_lemma}, Lemma~\ref{lem:properties:weighted_gaussian_measure} and Theorem~\ref{thm:properties:additivity_sto} we can now prove reduction of the quasi error~\eqref{eq:introduction:convergence} on each level.

\begin{theorem}[quasi-error reduction]%
  \label{thm:convergence:convergence}
  Let $c_{\mathrm{eq}}>0$, $0 < \theta_{\mathrm{det}},\theta_{\mathrm{sto}}<1$ and let $u_\ell$, $\mathcal{T}_\ell$, $\mathcal{M}_\ell$, $\Lambda_\ell$, $\Delta_\ell$, $\eta_{\mathrm{det},\ell}$ and $\eta_{\mathrm{sto},\ell}$ denote a sequence of approximate solutions, triangulations, marked cells, stochastic indices, marked indices and error indicators, respectively, generated by the adaptive Algorithm~\ref{alg:estimator:adaptive}.
  Then there exist $0<\delta_\ell<1$, $\omega_\ell>0$, $\tau>0$ and a regularization threshold $0<\vartheta^*<1$, such that for any $\vartheta\leq\vartheta^*$ it holds
  \begin{equation*}%
    \Vert u-u_{\ell+1}\Vert_{B}^2 + \omega_{\ell} \, \eta_{\mathrm{det},\ell+1}^2 + \omega_{\ell}\tau \, \eta_{\mathrm{sto},\ell+1}^2
    \leq \delta_\ell\Bigl( \Vert u-u_{\ell}\Vert_{B}^2 + \omega_{\ell} \, \eta_{\mathrm{det},\ell}^2 + \omega_{\ell}\tau \, \eta_{\mathrm{sto},\ell}^2\Bigr).
  \end{equation*}
\end{theorem}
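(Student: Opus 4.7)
The plan is to start by applying Lemma~\ref{lem:convergence:main_lemma} with $w_N = u_\ell$ and $\hat w_N = u_{\ell+1}$, which yields an upper bound on $\eta_{\mathrm{det},\ell+1}^2 + \tau\eta_{\mathrm{sto},\ell+1}^2$ in terms of quantities at level $\ell$ plus the penalty $K_\ell \|\grad(u_\ell - u_{\ell+1})\|_{\pi_\rho,D}^2$ with $K_\ell = (1+\varepsilon_{\mathrm{det}}^{-1}) c_{\mathrm{det}}^2 + (1+\varepsilon_{\mathrm{sto}}^{-1}) c_{\mathrm{sto}}^2 \tau$. Since $u_\ell - u_{\ell+1}$ is a polynomial in $y$ with FE coefficients, Lemma~\ref{lem:properties:weighted_gaussian_measure} together with Corollary~\ref{cor:properties:polynom_integration} provides a constant $C_{\Lambda,\ell}$ with $\|\grad(u_\ell - u_{\ell+1})\|_{\pi_\rho,D}^2 \leq C_{\Lambda,\ell} \|\grad(u_\ell - u_{\ell+1})\|_{\pi_0,D}^2$. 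Pairing this with the coercivity bound~\eqref{eq:setting:coercivity_B} and the Galerkin--Pythagoras identity $\|u_\ell - u_{\ell+1}\|_B^2 = \|u-u_\ell\|_B^2 - \|u-u_{\ell+1}\|_B^2$ turns the gradient difference into a telescoping energy-norm decrement. After multiplying the estimator bound by $\omega_\ell$ and adding $\|u-u_{\ell+1}\|_B^2$ on both sides, I would choose $\omega_\ell$ small enough so that $\omega_\ell K_\ell C_{\Lambda,\ell} / \check c_{\vartheta\rho} \leq 1$, absorbing the resulting $\|u-u_{\ell+1}\|_B^2$ contribution and leaving the intermediate bound
\[
  \|u-u_{\ell+1}\|_B^2 + \omega_\ell\bigl(\eta_{\mathrm{det},\ell+1}^2 + \tau\eta_{\mathrm{sto},\ell+1}^2\bigr) \leq \|u-u_\ell\|_B^2 + \omega_\ell A_\ell,
\]
where $A_\ell$ gathers the remaining level-$\ell$ terms from Lemma~\ref{lem:convergence:main_lemma}.

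I would then split into the two marking regimes of Algorithm~\ref{alg:estimator:adaptive}. In the mesh-refinement branch where $\eta_{\mathrm{det},\ell} \geq c_{\mathrm{eq}}\eta_{\mathrm{sto},\ell}$ and $\Delta_\ell = \emptyset$, the D\"orfler property $\eta_{\mathrm{det},\ell}(\mathcal{M}_\ell)^2 \geq \theta_{\mathrm{det}}^2 \eta_{\mathrm{det},\ell}^2$ produces $A_\ell \leq (1+\varepsilon_{\mathrm{det}})(1-\lambda\theta_{\mathrm{det}}^2)\eta_{\mathrm{det},\ell}^2 + (1+\varepsilon_{\mathrm{sto}})\tau\eta_{\mathrm{sto},\ell}^2$. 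In the stochastic branch where $\mathcal{M}_\ell = \emptyset$, I would first invoke Theorem~\ref{thm:properties:additivity_sto} with a regularization threshold $\vartheta^*$ small enough that its quasi-additivity remainder is negligible; combined with the D\"orfler reduction $\eta_{\mathrm{sto}}(u_\ell,\Delta_\ell)^2 \geq \theta_{\mathrm{sto}}^2 \eta_{\mathrm{sto},\ell}^2$, this controls the term $(1+\varepsilon_{\mathrm{sto}})\tau\eta_{\mathrm{sto}}(u_\ell,\partial\Lambda_{d_\ell}\setminus\Delta_\ell)^2$ while forcing me to pick $\tau$ large enough that the cross-term $3(1+\varepsilon_{\mathrm{det}})c_{\mathrm{inv}}^2 \eta_{\mathrm{sto}}(u_\ell,\Delta_\ell)^2$ from Lemma~\ref{lem:convergence:main_lemma} is absorbed, leading to $A_\ell \leq (1+\varepsilon_{\mathrm{det}})\eta_{\mathrm{det},\ell}^2 + \tau q_{\mathrm{sto}}\eta_{\mathrm{sto},\ell}^2$ for some $q_{\mathrm{sto}} < 1$.

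In each branch one coefficient exceeds $1$ (the $(1+\varepsilon_{\mathrm{sto}})$ factor in branch~(a), the $(1+\varepsilon_{\mathrm{det}})$ factor in branch~(b)), so a pure estimator inequality cannot yield $\delta_\ell < 1$. The remedy is to invoke Theorem~\ref{thm:estimator:reliability}, $\|u-u_\ell\|_B^2 \leq c_{\mathrm{rel}}^2(\eta_{\mathrm{det},\ell}^2 + c_{\mathrm{eq}}^2\eta_{\mathrm{sto},\ell}^2)$, together with the branch-specific dominance relation ($\eta_{\mathrm{sto},\ell}^2 \leq c_{\mathrm{eq}}^{-2}\eta_{\mathrm{det},\ell}^2$ in~(a) and the reverse in~(b)). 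Splitting $\|u-u_\ell\|_B^2 = \delta_\ell\|u-u_\ell\|_B^2 + (1-\delta_\ell)\|u-u_\ell\|_B^2$ and applying reliability to the second piece trades part of the energy error for additional estimator capacity; the dominance relation then absorbs the oversized coefficient on the subdominant side. Matching the LHS against $\delta_\ell(\|u-u_\ell\|_B^2 + \omega_\ell\eta_{\mathrm{det},\ell}^2 + \omega_\ell\tau\eta_{\mathrm{sto},\ell}^2)$ reduces everything to two scalar inequalities on the free constants, satisfiable by choosing $\varepsilon_{\mathrm{det}}, \varepsilon_{\mathrm{sto}}, \vartheta^*$ sufficiently small, a matched $\tau$, and $\omega_\ell$ in a non-empty interval $[\omega_\ell^{\mathrm{low}}, \omega_\ell^{\mathrm{high}}]$ compatible with both branches simultaneously.

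The hardest part is precisely this balancing of $\omega_\ell$. Its upper bound $\omega_\ell^{\mathrm{high}}$, coming from the coercivity absorption, scales like $\check c_{\vartheta\rho}/(K_\ell C_{\Lambda,\ell})$ and degrades as $\Lambda_{d_\ell}$ grows, because the Lipschitz constants $c_{\mathrm{det}}, c_{\mathrm{sto}}$ of Theorems~\ref{thm:properties:lipschitz_det}--\ref{thm:properties:lipschitz_sto} and the weights $\|P_\mu\zeta_{\vartheta\rho}\|_{\pi_0}$ highlighted in Remark~\ref{rem:properties:doubly_orthogonal_polynomials} both blow up with $|\mu|$. Its lower bound $\omega_\ell^{\mathrm{low}}$, coming from the reliability compensation, behaves like $c_{\mathrm{rel}}^2(1-\delta_\ell)/\mathrm{gap}_\ell$ where the gap depends on $\lambda\theta_{\mathrm{det}}^2$ or $\theta_{\mathrm{sto}}^2$. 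Keeping the interval non-empty for each fixed $\ell$ forces $\delta_\ell \to 1$ and $\omega_\ell \to 0$ as $\ell\to\infty$, which is the structural reason the statement provides only a level-dependent quasi-error reduction rather than a uniform contraction.
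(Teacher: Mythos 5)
Your overall plan matches the paper's proof step for step: apply Lemma~\ref{lem:convergence:main_lemma} with $w_N=u_\ell$, $\hat w_N=u_{\ell+1}$, use the Galerkin--Pythagoras identity to telescope the energy error, split into the two marking branches, exploit the D\"orfler criteria together with reliability (Theorem~\ref{thm:estimator:reliability}) via a convex splitting of $\Vert u-u_\ell\Vert_B^2$, invoke Theorem~\ref{thm:properties:additivity_sto} in the stochastic branch, and finally balance $\alpha,\varepsilon_{\mathrm{det}},\varepsilon_{\mathrm{sto}},\tau,\vartheta^*$ so all four resulting coefficients sit strictly below one. Your structural observation that $\omega_\ell\to0$ and $\delta_\ell\to1$ is also what the paper records in Remark~\ref{rem:convergence:no_convergence}.

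The one step that does not go through as written is the bound
$\Vert\grad(u_\ell-u_{\ell+1})\Vert_{\pi_\rho,D}^2 \leq C_{\Lambda,\ell}\Vert\grad(u_\ell-u_{\ell+1})\Vert_{\pi_0,D}^2$,
which you attribute to Lemma~\ref{lem:properties:weighted_gaussian_measure} and Corollary~\ref{cor:properties:polynom_integration}. Both results point in the opposite direction: since $\sigma_m(\rho)\geq1$, the measure $\pi_\rho$ is the \emph{wider} Gaussian, and Lemma~\ref{lem:properties:weighted_gaussian_measure} gives the continuous embedding $L^p(\Gamma,\pi_\rho)\subset L^p(\Gamma,\pi_0)$, i.e.\ $\Vert\cdot\Vert_{\pi_0}\lesssim\Vert\cdot\Vert_{\pi_\rho}$, not the reverse; Corollary~\ref{cor:properties:polynom_integration} only asserts finiteness of weighted polynomial norms, no quantitative comparison. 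The inequality you need is a norm equivalence on the finite-dimensional space $\hat{\mathcal V}_N$, which does hold with an $\ell$-dependent constant governed by the weights $c_\mu$ of Remark~\ref{rem:properties:doubly_orthogonal_polynomials}, but that is a different argument from the one you cite. The paper avoids the issue entirely by choosing $\omega_\ell$ \emph{adaptively}: it sets $\omega_\ell=\omega_\ell^*\,d_\ell^2\tilde d_\ell^{-2}$ with $\omega_\ell^*$ proportional to $K_\ell^{-1}$ and a factor of $\hat c_{\vartheta\rho}^{-1}$ from the boundedness estimate~\eqref{eq:setting:boundedness_B} (rather than the coercivity constant $\check c_{\vartheta\rho}$ you use), which makes the $\tilde d_\ell^2$- and $d_\ell^2$-terms cancel exactly with no reverse embedding needed. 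Your route can be repaired by replacing the misattributed embedding with the finite-dimensional equivalence, but as written the justification for the key absorption step is incorrect.
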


\begin{proof}%
  Let $e_\ell:=\Vert u-u_\ell\Vert_{B}$, $d_\ell:=\Vert u_\ell-u_{\ell+1}\Vert_{B}$ and $\tilde d_\ell:=\Vert\grad(u_\ell-u_{\ell+1})\Vert_{\pi_{\rho},D}$.
  With Galerkin orthogonality $e_{\ell+1}^2 = e_\ell^2 - d_\ell^2$ and Lemma~\ref{lem:convergence:main_lemma} it follows
  \begin{equation*}
    \begin{split}
      &e_{\ell+1}^2 + \omega \, \eta_{\mathrm{det},\ell+1}^2 + \omega\tau \, \eta_{\mathrm{sto},\ell+1}^2\\
      &\qquad\leq e_\ell^2 + \omega\Bigl((1+\varepsilon_{\mathrm{det}}^{-1})c_{\mathrm{det}}^2 + (1+\varepsilon_{\mathrm{sto}}^{-1})c_{\mathrm{sto}}^2\tau\Bigr)\tilde d_\ell^2 - d_\ell^2\\
      &\qquad\qquad+ \omega(1+\varepsilon_{\mathrm{det}}) \, \eta_{\mathrm{det},\ell}^2 - \omega(1+\varepsilon_{\mathrm{det}})\lambda \, \eta_{\mathrm{det},\ell}(u_\ell,\mathcal{M}_\ell,\Lambda_\ell)^2\\
      &\qquad\qquad+ \omega(1+\varepsilon_{\mathrm{sto}})\tau \, \eta_{\mathrm{sto},\ell}(u_\ell,\partial\Lambda_\ell\setminus\Delta_\ell)^2 + 3\omega(1+\varepsilon_{\mathrm{det}})c_{\mathrm{inv}}^2 \, \eta_{\mathrm{sto},\ell}(u_\ell,\Delta_\ell)^2.
    \end{split}
  \end{equation*}
  Let $\omega_\ell^* := \hat{c}_{\vartheta\rho}^{-1}\bigl( (1+\varepsilon_{\mathrm{det}}^{-1})c_{\mathrm{det}}^2 + (1+\varepsilon_{\mathrm{sto}}^{-1})c_{\mathrm{sto}}^2\tau \bigr)^{-1}$, where $\hat{c}_{\vartheta\rho}$ is the boundedness constant in~\eqref{eq:setting:boundedness_B}, and let $\omega_\ell=\omega_{\ell}^* d_\ell^2 \tilde d_\ell^{-2}$ such that the terms containing $d_\ell$ and $\tilde d_\ell$ cancel each other.
  Note that $\omega_\ell$ can always be chosen this way since $d_\ell>0$ implies $0 < \omega_\ell \leq \omega_\ell^*<\infty$.
  Next, we introduce the convex combination
  \begin{equation*}
    e_\ell^2 
    = (1-\alpha)e_\ell^2 + \alpha e_\ell^2
    \leq (1-\alpha)e_\ell^2 + \alpha c_{\mathrm{rel}}^2\eta_{\mathrm{det},\ell}^2 + \alpha c_{\mathrm{rel}}^2c_{\mathrm{eq}}^2\eta_{\mathrm{sto},\ell}^2
  \end{equation*}
  for any $\alpha\in(0,1)$, where $c_{\mathrm{rel}}$ is the reliability constant from Theorem~\ref{thm:estimator:reliability} and $c_{\mathrm{eq}}$ is the equilibration constant from~\eqref{eq:estimator:eta}.
  With this it follows
  \begin{equation*}
    \begin{split}
      &e_{\ell+1}^2 + \omega_{\ell} \, \eta_{\mathrm{det},\ell+1}^2 + \omega_{\ell}\tau \, \eta_{\mathrm{sto},\ell+1}^2\\
      &\qquad\leq (1-\alpha)e_\ell^2 + \Bigl(\alpha c_{\mathrm{rel}}^2 + \omega_{\ell}(1+\varepsilon_{\mathrm{det}})\Bigr)\eta_{\mathrm{det},\ell}^2\\
      &\qquad\qquad- \omega_{\ell}(1+\varepsilon_{\mathrm{det}})\lambda \, \eta_{\mathrm{det},\ell}(u_\ell,\mathcal{M}_\ell,\Lambda_\ell)^2 + \alpha c_{\mathrm{rel}}^2c_{\mathrm{eq}}^2 \, \eta_{\mathrm{sto},\ell}^2\\
      &\qquad\qquad+ \omega_{\ell}(1+\varepsilon_{\mathrm{sto}})\tau \, \eta_{\mathrm{sto},\ell}(u_\ell,\partial\Lambda_\ell\setminus\Delta_\ell)^2 + 3\omega_{\ell}(1+\varepsilon_{\mathrm{det}})c_{\mathrm{inv}}^2 \, \eta_{\mathrm{sto},\ell}(u_\ell,\Delta_\ell)^2.
  \end{split}
  \end{equation*}
  Next we need to distinguish between the different marking scenarios of Algorithm~\ref{alg:estimator:adaptive}.
  We first consider refinement of the spatial domain, i.e., $\eta_{\mathrm{det}, \ell}\geq c_{\mathrm{eq}}\eta_{\mathrm{sto},\ell}$, which implies $\Delta_\ell=\emptyset$ and
  \begin{equation*}
    \begin{split}
      &\alpha c_{\mathrm{rel}}^2 c_{\mathrm{eq}}^2 \, \eta_{\mathrm{sto},\ell}^2 + \omega_{\ell}(1+\varepsilon_{\mathrm{sto}})\tau \, \eta_{\mathrm{sto},\ell}(u_\ell,\partial\Lambda_\ell\setminus\Delta_\ell)^2 + 3\omega_{\ell}(1+\varepsilon_{\mathrm{det}})c_{\mathrm{inv}}^2 \, \eta_{\mathrm{sto},\ell}(u_\ell,\Delta_\ell)^2\\
      &\qquad= \omega_{\ell}\tau\Bigl(\alpha c_{\mathrm{rel}}^2 c_{\mathrm{eq}}^2\omega_{\ell}^{-1}\tau^{-1} + (1+\varepsilon_{\mathrm{sto}})\Bigr)\eta_{\mathrm{sto},\ell}^2\\
      &\qquad= \omega_{\ell}\tau\varepsilon_{\mathrm{sto}}(1+\beta_1) \, \eta_{\mathrm{sto},\ell}^2 + \omega_{\ell}\tau c_2 \, \eta_{\mathrm{sto},\ell}^2,
    \end{split}
  \end{equation*}
  for any $\beta_1 > 0$ and 
  \begin{equation*}
    c_2
    = c_2(\alpha,\varepsilon_{\mathrm{sto}},\tau,\beta_1)
    = \alpha c_{\mathrm{rel}}^2c_{\mathrm{eq}}^2\omega_{\ell}^{-1}\tau^{-1} + (1-\varepsilon_{\mathrm{sto}}\beta_1).
  \end{equation*}
  Moreover, by the D\"orfler criterion we have $\eta_{\mathrm{det},\ell}(u_\ell,\mathcal{M}_\ell,\Lambda_\ell)\geq\theta_{\mathrm{det}}\eta_{\mathrm{det},\ell}$ and since $\eta_{\mathrm{sto},\ell}\leq c_{\mathrm{eq}}^{-1}\eta_{\mathrm{det},\ell}$ we obtain
  \begin{equation*}
    \begin{split}
      &\Bigl(\alpha c_{\mathrm{rel}}^2 + \omega_{\ell}(1+\varepsilon_{\mathrm{det}})\Bigr)\eta_{\mathrm{det},\ell}^2 - \omega_{\ell}(1+\varepsilon_{\mathrm{det}})\lambda \, \eta_{\mathrm{det},\ell}(u_\ell,\mathcal{M}_\ell,\Lambda_\ell)^2 + \omega_{\ell}\tau\varepsilon_{\mathrm{sto}}(1+\beta_1) \, \eta_{\mathrm{sto},\ell}^2\\
      &\qquad\qquad\leq \omega_{\ell} c_1 \, \eta_{\mathrm{det},\ell}^2,
    \end{split}
  \end{equation*}
  for
  \begin{equation*}
    \begin{split}
      c_1 
      = c_1(\alpha,\varepsilon_{\mathrm{det}},\varepsilon_{\mathrm{sto}},\tau,\beta_1)
      = \alpha c_{\mathrm{rel}}^2\omega_{\ell}^{-1} + (1+\varepsilon_{\mathrm{det}})(1-\lambda\theta_{\mathrm{det}}^2)+\tau\varepsilon_{\mathrm{sto}}(1+\beta_1)c_{\mathrm{eq}}^{-2}.
    \end{split}
  \end{equation*}
  We thus have
  \begin{equation}%
    \label{eq:convergence:convergence:case_a}
    e_{\ell+1}^2 + \omega_{\ell} \, \eta_{\mathrm{det},\ell+1}^2 + \omega_{\ell}\tau \, \eta_{\mathrm{sto},\ell+1}^2
    \leq (1-\alpha)e_{\ell}^2 + \omega_{\ell} c_1 \, \eta_{\mathrm{det},\ell}^2 + \omega_{\ell}\tau c_2 \, \eta_{\mathrm{sto},\ell}^2.
  \end{equation}
  In the second case, when Algorithm~\ref{alg:estimator:adaptive} refines the stochastic space, we have $\eta_{\mathrm{det},\ell}<c_{\mathrm{eq}}\eta_{\mathrm{sto},\ell}$, which implies $\mathcal{M}_\ell=\emptyset$ and
  \begin{equation*}
    \begin{split}
      &\Bigl(\alpha c_{\mathrm{rel}}^2 + \omega_{\ell}(1+\varepsilon_{\mathrm{det}})\Bigr)\eta_{\mathrm{det},\ell}^2 - \omega_{\ell}(1+\varepsilon_{\mathrm{det}})\lambda \, \eta_{\mathrm{det},\ell}(u_\ell,\mathcal{M}_\ell,\Lambda_\ell)^2\\
      &\qquad= \omega_{\ell}\Bigl(\alpha c_{\mathrm{rel}}^2\omega_{\ell}^{-1} + (1+\varepsilon_{\mathrm{det}})\Bigr)\eta_{\mathrm{det},\ell}^2\\
      &\qquad= \omega_{\ell} c_3 \, \eta_{\mathrm{det},\ell}^2 + \omega_{\ell}\varepsilon_{\mathrm{det}}(1+\beta_2) \, \eta_{\mathrm{det},\ell}^2
    \end{split}
  \end{equation*}
  for any $\beta_2 > 0$ and
  \begin{equation*}
    c_3 = c_3(\alpha,\varepsilon_{\mathrm{det}},\beta_2)
    = \alpha c_{\mathrm{rel}}^2\omega_{\ell}^{-1} + (1-\varepsilon_{\mathrm{det}}\beta_2).
  \end{equation*}
  Again, by the D\"orfler criterion, it holds $\eta_{\mathrm{sto},\ell}(u_\ell,\Delta_\ell)\geq\theta_{\mathrm{sto}}\eta_{\mathrm{sto},\ell}$ and in combination with $\eta_{\mathrm{det},\ell}\leq c_{\mathrm{eq}}\eta_{\mathrm{sto},\ell}$ and Theorem~\ref{thm:properties:additivity_sto} we estimate
  \begin{equation*}
    \begin{split}
      &\alpha c_{\mathrm{rel}}^2c_{\mathrm{eq}}^2 \, \eta_{\mathrm{sto},\ell}^2 + \omega_{\ell}\varepsilon_{\mathrm{det}}(1+\beta_2) \, \eta_{\mathrm{det},\ell}^2\\
      &\quad+ \omega_{\ell}(1+\varepsilon_{\mathrm{sto}})\tau \, \eta_{\mathrm{sto},\ell}(u_\ell,\partial\Lambda_\ell\setminus\Delta_\ell)^2 + 3\omega_{\ell}(1+\varepsilon_{\mathrm{det}})c_{\mathrm{inv}}^2 \, \eta_{\mathrm{sto},\ell}(u_\ell,\Delta_\ell)^2\\
      &\qquad\qquad\leq \omega_{\ell}\tau c_4 \, \eta_{\mathrm{sto},\ell}^2,
    \end{split}
  \end{equation*}
  where we set 
  \begin{equation*}
    \begin{split}
      c_4 &= c_4(\alpha,\varepsilon_{\mathrm{det}},\varepsilon_{\mathrm{sto}},\tau,\beta_2,\vartheta)\\
          &= \alpha \tau^{-1}\check{c}_{\vartheta\rho}^{-1}c_{\mathrm{rel}}^2c_{\mathrm{eq}}^2 + \tau^{-1}\varepsilon_{\mathrm{det}}c_{\mathrm{eq}}^2(1+\beta_2) + (1+\varepsilon_{\mathrm{sto}})(1+\varepsilon_\vartheta)\\
      &\qquad- \theta_{\mathrm{sto}}^2\Bigl(1+\varepsilon_{\mathrm{sto}} - 3(1+\varepsilon_{\mathrm{det}})c_{\mathrm{inv}}^2\tau^{-1}\Bigr).
    \end{split}
  \end{equation*}
  Here, we set $0 < \varepsilon_\vartheta \leq \varepsilon_\vartheta^*\eta_{\mathrm{sto},\ell}^{-2}$, where $\varepsilon_\vartheta^*$ is the maximal $\varepsilon$ such that Theorem~\ref{thm:properties:additivity_sto} holds for $\vartheta$.
  Similar to~\eqref{eq:convergence:convergence:case_a}, this now yields the estimate
  \begin{equation}
    \label{eq:convergence:convergence:case_b}
    e_{\ell+1}^2 + \omega_{\ell} \, \eta_{\mathrm{det},\ell+1}^2 + \omega_{\ell}\tau \, \eta_{\mathrm{sto},\ell+1}^2
    \leq (1-\alpha)e_{\ell}^2 + \omega_{\ell} c_3 \, \eta_{\mathrm{det},\ell}^2 + \omega_{\ell}\tau c_4 \, \eta_{\mathrm{sto},\ell}^2.
  \end{equation}
  What remains is to choose the parameters $\alpha$, $\varepsilon_{\mathrm{det}}$, $\varepsilon_{\mathrm{sto}}$, $\tau$, $\beta_1$ $\beta_2$ and $\vartheta$ such that simultaneously $0 < c_1, \dots, c_4 < 1$.
  First we note that $c_1>0$ is trivially satisfied since $\lambda<1$ and thus $1-\lambda\theta_{\mathrm{det}}^2>0$ independent of the choice of $\theta_{\mathrm{det}}\in(0,1)$.
  With
  \begin{equation*}
    \begin{split}
      \varepsilon_{\mathrm{det}} < \frac{\lambda\theta_{\mathrm{det}}^2}{3(1-\lambda\theta_{\mathrm{det}}^2)},
      \quad
      \varepsilon_{\mathrm{sto}} < \frac{\lambda\theta_{\mathrm{det}}^2c_{\mathrm{eq}}^2}{3\tau(1+\beta_1)}
      \qquad\mbox{and}\qquad
      \alpha < \frac{\lambda\theta_{\mathrm{det}}^2\omega_{\ell}}{3c_{\mathrm{rel}}^2}
    \end{split}
  \end{equation*}
  we ensure that $c_1 < 1$.
  If additionally
  \begin{equation*}
    \frac{1}{2\varepsilon_{\mathrm{sto}}}<\beta_1<\frac{1}{\varepsilon_{\mathrm{sto}}}
    \qquad\mbox{and}\qquad
    \alpha < \frac{\omega_{\ell}\tau}{2c_{\mathrm{rel}}^2c_{\mathrm{eq}}^2},
  \end{equation*}
  we guarantee that $0 < \alpha c_{\mathrm{rel}}^2c_{\mathrm{eq}}^2\omega_{\ell}^{-1}\tau^{-1} < c_2 < 1$.
  To ensure that $0<c_4<1$, we set $\tau>3(1+\varepsilon_{\mathrm{det}})c_{\mathrm{inv}}^2$ such that $1-3(1+\varepsilon_{\mathrm{det}})c_{\mathrm{inv}}^2\tau^{-1}>0$.
  By Theorem~\ref{thm:properties:additivity_sto} there exist $\vartheta^*\in(0,1)$ such that
  \begin{equation*}
   0 
   < \varepsilon_\vartheta 
   < \frac{\theta_{\mathrm{sto}}^2(1-3c_{\mathrm{inv}}^2\tau^{-1})}{(2+\frac{3}{2c_{\mathrm{eq}}^2(1+\beta_2)}\theta_{\mathrm{sto}}^2c_{\mathrm{inv}}^2)}
   \qquad\mbox{for all }\vartheta<\vartheta^*.
  \end{equation*}
  Now we choose
  \begin{equation*}
    \varepsilon_{\mathrm{det}}<\frac{\varepsilon_\vartheta\tau}{2c_{\mathrm{eq}}^2(1+\beta_2)},
    \qquad
    \varepsilon_{\mathrm{sto}} < \frac{\theta_{\mathrm{sto}}^2(1-3c_{\mathrm{inv}}^2\tau^{-1}) - (2+\frac{3}{2c_{\mathrm{eq}}^2(1+\beta_2)}\theta_{\mathrm{sto}}^2c_{\mathrm{inv}}^2)\varepsilon_\vartheta}{1+\varepsilon_\vartheta-\theta_{\mathrm{sto}}^2}
  \end{equation*}
  and $\alpha < \frac{1}{2}\varepsilon_\vartheta\omega_{\ell}\tau c_{\mathrm{rel}}^{-2}c_{\mathrm{eq}}^{-2}$, which leads to
  \begin{equation*}
    c_4
    < \frac{1}{2}\varepsilon_\vartheta + \frac{1}{2}\varepsilon_\vartheta + 1 + \varepsilon_\vartheta - 2\varepsilon_\vartheta
    = 1.
  \end{equation*}
  Note that the upper bound of $\varepsilon_\vartheta$ implies that the upper bound of $\varepsilon_{\mathrm{sto}}$ is positive. 
  Moreover, since $(1+\varepsilon_\vartheta - \theta_{\mathrm{sto}}^2) > 0$ and $1-3(1+\varepsilon_{\mathrm{det}})c_{\mathrm{inv}}^2\tau^{-1} < 1$ for any $\tau>0$, it follows
  \begin{equation*}
    0 
    < (1+\varepsilon_{\mathrm{sto}})(1+\varepsilon_\vartheta) - \theta_{\mathrm{sto}}^2\bigl(1+\varepsilon_{\mathrm{sto}} - 3(1+\varepsilon_{\mathrm{det}})c_{\mathrm{inv}}^2\tau^{-1}\bigr)
  \end{equation*}
  and thus $0 < c_4$.
  Finally, 
  \begin{equation*}
    \frac{1}{2\varepsilon_{\mathrm{det}}} < \beta_2 < \frac{1}{\varepsilon_{\mathrm{det}}}
    \qquad\mbox{and}\qquad
    \alpha<\frac{\omega_{\ell}}{2c_{\mathrm{rel}}^{2}}
  \end{equation*}
  lead to $0 < c_3 < 1$.
  Choosing $\alpha,\varepsilon_{\mathrm{det}}$ and $\varepsilon_{\mathrm{sto}}$ smaller than the minimum of the respective bounds above yields $0<c_1,\dots,c_4<1$ and thus concludes the proof with $\delta_\ell := \max\{ 1-\alpha, c_1, \dots, c_4 \} < 1$.
\end{proof}

\begin{remark}[error reduction and convergence]%
  \label{rem:convergence:no_convergence}
  Theorem~\ref{thm:convergence:convergence} proves reduction of the quasi-error in a single iteration step.
  However, as it is possible that $\delta_\ell$ converges faster towards one then $\exp(-\ell^{-k})$ for $k>1$ as $\ell\to\infty$, this might not imply convergence of the quasi-error to zero.
  Furthermore, it is impossible to bound $\delta_\ell$ independently of $\ell$ for the lognormal diffusion coefficient~\eqref{eq:setting:a} since function spaces with adapted Gaussian measures have to be employed.
  As a consequence,~\eqref{eq:setting:boundedness_B} and~\eqref{eq:setting:coercivity_B} hold with respect to differently weighted norms.
  This causes a dependence of the Lipschitz constants in Themorem~\ref{thm:properties:lipschitz_det} and Theorem~\ref{thm:properties:lipschitz_sto} on the size of the active set $\Lambda_\ell$ and yields no positive lower or upper bound for $\Vert\bullet\Vert_B / \Vert\grad\bullet\Vert_{\pi_\rho,D}$~\cite[Lemma 2.32]{SchwabGittelson2011}.
\end{remark}

Remark~\ref{rem:convergence:no_convergence} also implies that $\delta_\ell$ can be bounded independently of $\ell$ if $a$ is bounded uniformly from above and below.
Hence, as a byproduct of Theorem~\ref{thm:convergence:convergence} we obtain a generalization of the convergence result in~\cite[Theorem 7.2]{EigelGittelson2014convergence} from affine to arbitrary uniformly bounded and positive diffusion coefficients.

\begin{corollary}[convergence for bounded coefficients]%
  \label{cor:convergence:convergence_bounded_a}
  Consider the setting of Theorem~\ref{thm:convergence:convergence} and additionally assume that the coefficient $a_N$ is uniformly positive and bounded, i.e., there exist $0 < \check{a} < \hat{a} < \infty$ such that $\check{a} \leq a_N(x,y) \leq \hat{a}$ for all $x\in D$ and almost all $y\in\Gamma$.
  Then there exist $0<\delta<1$, $\omega>0$ independent of $\ell$ and $\tau>0$, such that
  \begin{equation*}%
      \Vert u-u_{\ell+1}\Vert_{B}^2 + \omega \, \eta_{\mathrm{det},\ell+1}^2 + \omega\tau \, \eta_{\mathrm{sto},\ell+1}^2
      \leq \delta\Bigl( \Vert u-u_{\ell}\Vert_{B}^2 + \omega \, \eta_{\mathrm{det},\ell}^2 + \omega\tau \, \eta_{\mathrm{sto},\ell}^2\Bigr).
  \end{equation*}
\end{corollary}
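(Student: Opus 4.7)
The plan is to essentially replay the proof of Theorem~\ref{thm:convergence:convergence} under the uniform boundedness assumption on $a_N$, keeping careful track of which constants become $\ell$-independent. The starting point is the observation, made in Remark~\ref{rem:convergence:no_convergence}, that the $\ell$-dependence of $\delta_\ell$ and $\omega_\ell$ enters only through (i) the Lipschitz constants $c_{\mathrm{det}}, c_{\mathrm{sto}}$ from Theorems~\ref{thm:properties:lipschitz_det}--\ref{thm:properties:lipschitz_sto}, which degrade with $\vert\Lambda_\ell\vert$ via $\Vert P_\mu \zeta_{\vartheta\rho}\Vert_{\pi_0}$, and (ii) the non-equivalence of $\Vert\cdot\Vert_B$ and $\Vert\nabla\cdot\Vert_{\pi_\rho,D}$, which appears in the ratio $d_\ell^2 / \tilde d_\ell^2$ that defines $\omega_\ell$ and in the constants $\hat c_{\vartheta\rho}, \check c_{\vartheta\rho}$. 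Under the uniform bounds $\check a \le a_N \le \hat a$ both pathologies disappear.

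First I would set $\vartheta = 0$, so that $\zeta_{\vartheta\rho} \equiv 1$ and $\pi_{\vartheta\rho} = \pi_0$; this is permissible precisely because uniform positivity and boundedness of $a_N$ make the bilinear form $B$ coercive and continuous on $L^2(\Gamma,\pi_0;\mathcal{X})$ with constants $\check a$ and $\hat a$. Consequently $\Vert\nabla w\Vert_{\pi_0,D}^2 \le \check a^{-1} \Vert w\Vert_B^2 \le \hat a/\check a\,\Vert\nabla w\Vert_{\pi_0,D}^2$ for every $w$, so the ratio $d_\ell^2/\tilde d_\ell^2$ is pinched between $\check a$ and $\hat a$. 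Next I would invoke Corollary~\ref{cor:properties:lipschitz_det_simple} and Corollary~\ref{cor:properties:lipschitz_sto_simple} to replace $c_{\mathrm{det}}, c_{\mathrm{sto}}$ by the $\ell$-independent constants $\sqrt{2}\,c_{\mathrm{inv}}\hat a$ and $\hat a$. Theorem~\ref{thm:properties:additivity_sto} becomes vacuous in this setting because with $\vartheta=0$ the cross term $\langle g_\Delta g_{\partial\Lambda\setminus\Delta},\zeta_{\vartheta\rho}^2-\zeta_{\vartheta\rho}\rangle_{\pi_0,D}$ is identically zero, so exact additivity of $\eta_{\mathrm{sto}}^2$ holds and we may simply take $\varepsilon_\vartheta = 0$ throughout.

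With these substitutions, running the argument of Theorem~\ref{thm:convergence:convergence} verbatim produces the same decomposition of $e_{\ell+1}^2 + \omega\,\eta_{\mathrm{det},\ell+1}^2 + \omega\tau\,\eta_{\mathrm{sto},\ell+1}^2$, but now every coefficient in the expressions $c_1,\dots,c_4$ is an $\ell$-independent quantity: $c_{\mathrm{rel}}$ depends only on $\check a$ and shape regularity, $\hat c_{\vartheta\rho}$ collapses to $\hat a$, and the Lipschitz constants are as above. I would define $\omega^\ast := \hat a^{-1}\bigl((1+\varepsilon_{\mathrm{det}}^{-1})(\sqrt 2 c_{\mathrm{inv}}\hat a)^2 + (1+\varepsilon_{\mathrm{sto}}^{-1})\hat a^2\tau\bigr)^{-1}$ and set $\omega := (\check a/\hat a)\omega^\ast$, which satisfies $\omega \le \omega^\ast d_\ell^2/\tilde d_\ell^2$ uniformly in $\ell$; this preserves the cancellation of the $d_\ell, \tilde d_\ell$ terms while yielding an $\ell$-independent weight. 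Choosing $\tau>3(1+\varepsilon_{\mathrm{det}})c_{\mathrm{inv}}^2$ and then $\alpha,\varepsilon_{\mathrm{det}},\varepsilon_{\mathrm{sto}},\beta_1,\beta_2$ exactly as in Theorem~\ref{thm:convergence:convergence} but using the simplified bounds makes all four quantities $c_1,\dots,c_4$ strictly less than one, independently of $\ell$, so $\delta := \max\{1-\alpha, c_1,c_2,c_3,c_4\} < 1$ works uniformly.

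The only genuinely delicate step is the uniformity of $\omega$: one must verify that replacing $\omega_\ell = \omega^\ast d_\ell^2/\tilde d_\ell^2$ by a single $\omega$ bounded above by $\omega^\ast \inf_\ell d_\ell^2/\tilde d_\ell^2 \ge \check a\,\omega^\ast$ still permits the cancellation of $-d_\ell^2$ against $\omega(1+\varepsilon_{\mathrm{det}}^{-1})c_{\mathrm{det}}^2 \tilde d_\ell^2 + \omega(1+\varepsilon_{\mathrm{sto}}^{-1})c_{\mathrm{sto}}^2\tau\tilde d_\ell^2 \le \hat a\omega\,\bigl((1+\varepsilon_{\mathrm{det}}^{-1})c_{\mathrm{det}}^2 + (1+\varepsilon_{\mathrm{sto}}^{-1})c_{\mathrm{sto}}^2\tau\bigr) d_\ell^2/\check a = d_\ell^2$, which holds by the choice $\omega \le (\check a/\hat a)\omega^\ast$. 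All other estimates carry over line by line, and the conclusion follows.
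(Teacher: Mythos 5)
Your proof is a detailed unpacking of the paper's two-sentence argument and follows exactly the same route: with $\vartheta=0$ and $\check a\le a_N\le\hat a$ the equivalence $\Vert\cdot\Vert_B\approx\Vert\grad\cdot\Vert_{\pi_0,D}$, combined with Corollaries~\ref{cor:properties:lipschitz_det_simple} and~\ref{cor:properties:lipschitz_sto_simple} and exact additivity of $\eta_{\mathrm{sto}}^2$, strips all $\ell$-dependence from the constants in the proof of Theorem~\ref{thm:convergence:convergence}, so the argument is correct in substance. Two small bookkeeping corrections are needed: your normalization $\omega=(\check a/\hat a)\omega^\ast$ only satisfies $\omega\le\omega^\ast d_\ell^2/\tilde d_\ell^2$ when $\hat a\ge1$ (just set $\omega:=\check a\bigl((1+\varepsilon_{\mathrm{det}}^{-1})c_{\mathrm{det}}^2+(1+\varepsilon_{\mathrm{sto}}^{-1})c_{\mathrm{sto}}^2\tau\bigr)^{-1}$ directly), and with $\varepsilon_\vartheta=0$ the condition $\alpha<\tfrac12\varepsilon_\vartheta\omega\tau c_{\mathrm{rel}}^{-2}c_{\mathrm{eq}}^{-2}$ from the theorem's proof degenerates, so the final parameter balance must be re-derived from $c_4<1$ with the $(1+\varepsilon_\vartheta)$ factor dropped, which is in fact easier than the original.
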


\begin{proof}%
  Due to the boundedness of $a_N$ the problem is well posed in $L^2(\Gamma,\pi_0;\mathcal{X})$ and no adapted function spaces are required. 
  As a consequence $\Vert \bullet \Vert_B \approx \Vert \grad \bullet\Vert_{\pi_0,D}$ proves the claim.
\end{proof}

We close the section with a remark on the convergence in practical scenarios.

\begin{remark}[convergence in applications]%
  \label{rem:convergence:practical_convergence}
  The diffusion coefficient~\eqref{eq:setting:a} becomes uniformly positive and bounded on the subdomain $\Gamma_k = \{ y\in\Gamma \ \vert\ \vert y \vert \leq k\}\subset\Gamma$ for any $k > 0$.
  Moreover, the probability of $y\in\Gamma\setminus\Gamma_k$ vanishes as $k\to\infty$.
  As most practical applications rely on a finite number of realizations $y^{(i)}$ of $y$ to approximate stochastic integrals, we can assume all $y^{(i)}\in\Gamma_k$ for some $k>0$ with probability arbitrarily close to $1$.
  Corollary~\ref{cor:convergence:convergence_bounded_a} thus implies the convergence of Algorithm~\ref{alg:estimator:adaptive} for the unbounded diffusion coefficient~\eqref{eq:setting:a} with probability arbitrarily close to $1$ in practical applications.
\end{remark}
 
\section{Numerical experiments}%
\label{sec:experiments}

In this section we show that the quasi-error reduction of Algorithm~\ref{alg:estimator:adaptive} can also be observed in numerical experiments.
For that we rely on typical benchmark problems as used in for example~\cite{DolgovScheichl2019alsCross, EigelMarschall2020lognormal, EigelFarchmin2022avmc}.
As spatial domain we consider the L-shape $D=(0,1)^2\setminus[0.5,1]^2$.
The derived total error estimator $\eta$ is used to steer the adaptive refinement of the triangulation $\mathcal{T}$ and the space $\Lambda_d$ as described in Algorithm~\ref{alg:estimator:adaptive}.

To validate the reliability of the estimator and its contributions in the adaptive scheme, we compute an empirical approximation of the true $L^2(\Gamma, \pi;\mathcal{X})$-error using $N_{\mathrm{MC}}$ samples, i.e.
\begin{equation}%
  \label{eq:experiments:error}
  \Vert \grad(u - u_{\ell}) \Vert_{\pi_0,D}^2
  \approx \mathcal{E}(u_{\ell})^2
  = \frac{1}{N_{\mathrm{MC}}} \sum_{i=1}^{N_{\mathrm{MC}}} \Vert \grad \hat{u}(y^{(i)}) - \grad u_{\ell}(y^{(i)}) \Vert_{\pi_0, D}^2.
\end{equation}
Here, $\hat{u}(y^{(i)})$ is the deterministic sampled solution $u(y^{(i)})$ projected onto a uniform refinement $\hat{\mathcal{T}}$ of the finest FE mesh $\mathcal{T}_L$ obtained in the adaptive refinement loop.
Since all triangulations generated by Algorithm~\ref{alg:estimator:adaptive} as well as $\hat{\mathcal{T}}$ are nested, we employ simple nodal interpolation of each $u_{\ell}$ onto $\hat{\mathcal{T}}$ to guarantee $u_{\ell}\in\mathcal{V}_N(\Lambda_d; \hat{\mathcal{T}},p)$.
The choice of $N_{\mathrm{MC}}=250$ proved to be sufficient to obtain consistent estimates of the error in our experiments as well as in other works (cf.~\cite{EigelMarschall2020lognormal,EigelFarchmin2022avmc}).

As benchmark problem we consider the stationary diffusion problem~\eqref{eq:introduction:darcy} with constant right-hand side $f(x,y) = 1$.
We assume the coefficients of the affine diffusion field~\eqref{eq:setting:gamma} to enumerate planar Fourier modes in increasing total order, i.e.,
\begin{align*}
  \gamma_m(x) = \frac{9}{10\zeta(\sigma)} m^{-\sigma} \cos\bigl(2\pi\beta_1(m) x_1\bigr)\, \cos\bigl(2\pi\beta_2(m) x_2\bigr),
  \qquad m=1,\dots,\hat M,
\end{align*}
where $\zeta$ is the Riemann zeta function and, for $k(m) = \lfloor -\frac{1}{2} + \sqrt{\frac{1}{4} +2m} \rfloor$, $\beta_1(m) = m - k(m)(k(m)+1)/2$ and $\beta_2(m) = k(m) - \beta_1(m)$.
For our experiments we consider an expansion length of $\hat M=20$, decay $\sigma=2$, choose $\rho=1$ and $\vartheta=0.1$ similar to~\cite{EigelMarschall2020lognormal} and discretize~~\eqref{eq:setting:a} in the same finite element space as the solution, i.e., conforming Lagrange elements of order $p=1$ or $p=3$.
All finite element computations are conducted with the \texttt{FEniCS} package~\cite{fenics}.
For the stochastic discretization we rely on a low-rank tensor decomposition, i.e., the Tensor Train format~\cite{Oseledets2011}, to approximate all stochastic quantities.
In particular we build on the same framework as~\cite{EigelFarchmin2022avmc}, which uses the open source software package \texttt{xerus}~\cite{xerus}.

The constant right-hand side has an exact representation in the Tensor Train format, see e.g.~\cite{EigelFarchmin2022avmc} for the construction.
To assure that the approximation $a_N$ of the lognormal diffusion coefficient~\eqref{eq:setting:a} is sufficient, we employ the approach described in~\cite{EigelFarchmin2021expTT}.
In particular we enforce that the relative approximation error $\Vert a-a_N\Vert_{L^2(\Gamma,\pi_0;L^\infty(D))}$ is at least one order of magnitude smaller then the empirical error~\eqref{eq:experiments:error}.  %

Algorithm~\ref{alg:estimator:adaptive} is instantiated with a single mode $M=1$ discretized with an affine polynomial, i.e., dimension $d_1=2\in\mathbb{N}^1$.
The initial spatial mesh consists of $\vert\mathcal{T}_1\vert=143$ triangles for affine and $\vert\mathcal{T}_1\vert=64$ for cubic ansatz functions.
The marking parameters are set to $\theta_{\mathrm{det}}=0.3$ and $\theta_{\mathrm{sto}}=0.5$, respectively.
To achieve equilibration of the two estimator contributions we choose $c_{\mathrm{eq}}=5$.
We terminate Algorithm~\ref{alg:estimator:adaptive} after $L=12$ iteration steps.

\begin{figure}[htp]
  \begin{center}
    \includegraphics[height=5.5cm]{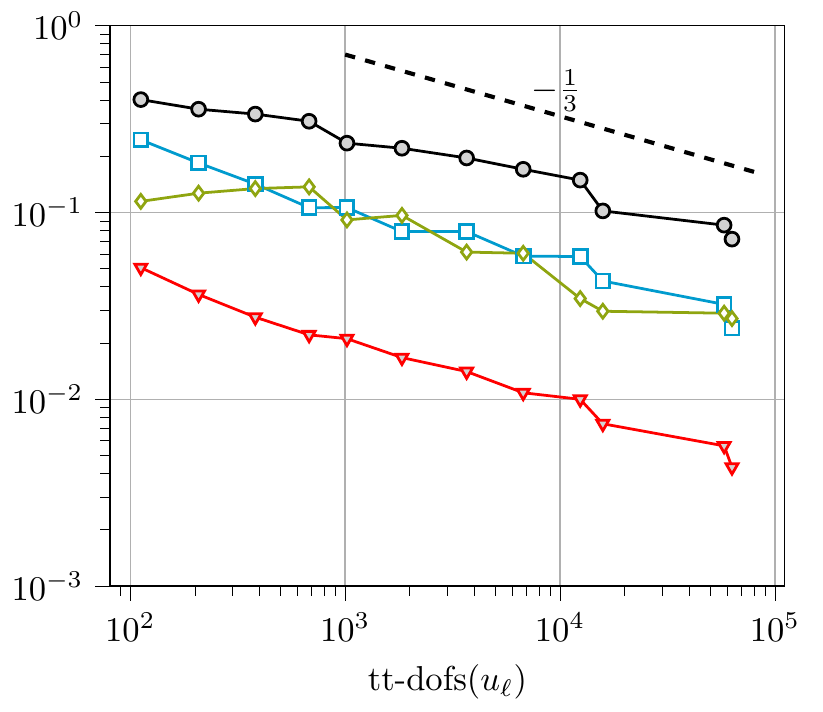}
    \includegraphics[height=5.5cm]{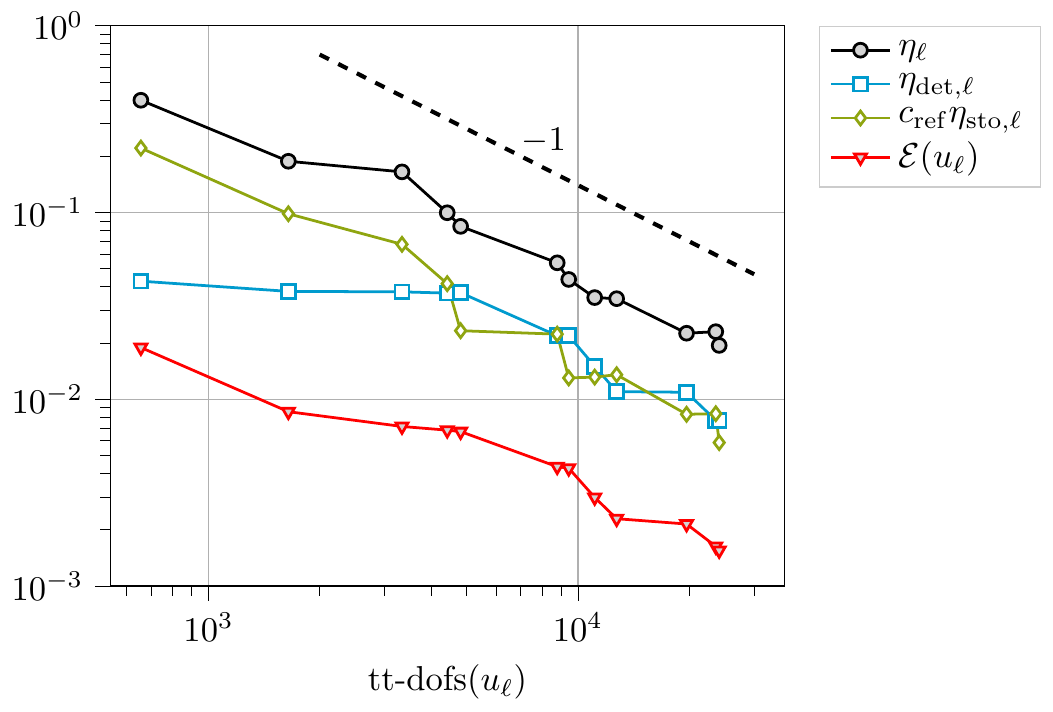}
  \end{center}
  \caption{%
    Reduction of estimator and error on the L-shaped domain for affine (left) and cubic (right) Lagrange finite elements with respect to the Tensor Train degrees of freedom of the Galerkin projection $u_N$.
  }%
  \label{fig:experiments:convergence}
\end{figure}

Figure~\ref{fig:experiments:convergence} depict the sampled root mean squared $H_0^1(D)$ error $\mathcal{E}(u_{\ell})$, the overall error estimator $\eta(u_{\ell})$ and the two estimator contributions $\eta_{\mathrm{det}}(u_\ell)$ and $\eta_{\mathrm{sto}}(u_\ell)$ for affine and cubic Lagrange finite elements, respectively.
The plots depict error and estimator against the degrees of freedom (dofs) of the coefficient tensor of the Galerkin projection $u_N$ compressed by the Tensor Train format, i.e.
\begin{equation*}
  \operatorname{tt-dofs}(u_N) = Jr_1 - r_1^2 + \sum_{m=1}^{M-1} (r_m d_m r_{m+1} - r_{m+1}^2) + r_M d_M,
\end{equation*}
where $r = (r_1,\dots,r_M)\in\mathbb{N}^M$ are the Tensor Train ranks, see e.g.~\cite{Holtz2012} for details.

The estimator mirrors the behaviour of the error with a consistent overestimation by a factor $c_{\mathrm{rel}}\approx10$, which is in line with Theorem~\ref{thm:estimator:reliability}.
Additionally, the deterministic estimator contribution $\eta_{\mathrm{det}}$ captures the singularity of the L-shaped domain and prioritizes to refine the mesh at the reentrant corner as known from deterministic adaptive FE methods, which is in line with previous results~\cite{EGSZ14,BespalovPraetorius2019convergence,Bespalov2018,Bespalov2021twolevel,EMPS20}.
We also observe that Algorithm~\ref{alg:estimator:adaptive} focusses on refinement of the finite element mesh for $p=1$ and tends to enlarge the stochastic space in the case $p=3$.
Again, this is in line with the expectations, as the higher regularity of cubic finite elements allows for coarser spatial resolution.

Finally we note that the experiments are in line with the results of Theorem~\ref{thm:convergence:convergence} as we observe a reduction of both error and estimator in each iteration.
Interestingly, we even see that the algorithm reduces both error and estimator with an overall constant rate, which is consistent with the results of e.g.~\cite{EigelMarschall2020lognormal,EigelFarchmin2022avmc}.
This is a stronger behavior than predicted by Theorem~\ref{thm:convergence:convergence} and can be seen as a validation of Remark~\ref{rem:convergence:practical_convergence}, i.e., the diffusion coefficient $a$ is effectively only considered on a bounded domain by Algorithm~\ref{alg:estimator:adaptive}, which yields uniform boundedness of $a$ from above and below and thus Corollary~\ref{cor:convergence:convergence_bounded_a} can be applied. 
\section*{Acknowledgements}
M.\ Eigel acknowledges the partial support of the DFG SPP 1886 ``Polymorphic Uncertainty Modelling for the Numerical Design of Structures'' and the EMPIR project 20IND04-ATMOC.
This project (20IND04 ATMOC) has received funding from the EMPIR programme co-financed by the Participating States and from the European Union’s Horizon 2020 research and innovation programme.
N.\ Hegemann has received funding from the Federal Ministry for Economic Affairs and Climate Action (BMWK) in the frame of the "QI-Digital" initiative programme “Metrology for Artificial Intelligence in Medicine” (M4AIM) and the EPM project 22HLT01 QUMPHY. 
This project (22HLT01 QUMPHY) has received funding from the European Partnership on Metrology, co-financed from the European Union’s Horizon Europe Research and Innovation Programme and by the Participating States.

\printbibliography

\end{document}